\documentclass[letterpaper,11pt,leqno]{amsart}
\usepackage[latin1]{inputenc}
\usepackage{latexsym}
\usepackage[dvips]{graphicx}
\usepackage{amsfonts}
\usepackage{epsfig}
\usepackage{geometry}
\usepackage{color}
\geometry{lmargin=1in,rmargin=1in,bmargin=1.0in,tmargin=1.0in}
\usepackage{amsmath}
\usepackage{amssymb}
 \usepackage{pict2e}

\newtheorem{ejem}{Example}%[section]
%\numberwithin{equation}{section}
%    Absolute value notation

%
\newcommand{\N}{\mathbb{N}}

\newcommand{\R}{\mathbb{R}}
\renewcommand{\P}{\mathcal{P}}
%\newcommand{\C}{\mathbf{C}}

%LP-chica

%MATHCAL

\newcommand{\C}{\mathcal{C}}

\renewcommand{\H}{\mathcal{H}}

\renewcommand{\S}{\mathcal{S}}

%

%GREEK

%Evolution systems

%LIMITES

%PROOFS

%OPERATOR

%

%
\newtheorem{theorem}{Theorem}[section]
\newtheorem{lemma}[theorem]{Lemma}

\newtheorem{remark}[theorem]{Remark}
\newtheorem{proposition}[theorem]{Proposition}

\newtheorem{definition}[theorem]{Definition}
\begin{document}

\title{A dynamic gradient approach to Pareto optimization with nonsmooth convex objective functions}

\author{H\'edy Attouch}

\author{Guillaume Garrigos}

\author{Xavier Goudou}

\address{Institut de Math\'ematiques et Mod\'elisation de Montpellier, UMR 5149 CNRS, Universit\'e Montpellier 2, place Eug\`ene Bataillon,
34095 Montpellier cedex 5, France}
\email{hedy.attouch@univ-montp2.fr, guillaume.garrigos@gmail.com, xavier.goudou@yahoo.fr}

\vspace{0.5cm}

\date{June 5, 2014}

\begin{abstract}

In a general Hilbert framework, we consider  continuous gradient-like dynamical systems for constrained multiobjective optimization involving non-smooth convex objective functions.
% Our approach is in the line of \cite{AG1}, where was considered the case of  differentiable objective functions. 
Based on the Yosida regularization of the  subdifferential operators involved in the system, we  obtain the  existence of strong global trajectories.   We prove a  descent property for each objective function, and the  convergence of trajectories to weak Pareto minima. This approach provides a dynamical endogenous weighting of the objective functions.
Applications are given to cooperative games, inverse problems, and numerical multiobjective optimization.

\end{abstract}

\maketitle

\vspace{0.5cm}

\paragraph{\textbf{Key words}:}  multiobjective optimization; Pareto optima; multiobjective steepest descent; convex objective functions; subdifferential operators;  Yosida approximation; asymptotic behavior; cooperative games;  sparse optimization;  inverse problems; gradient methods.

\vspace{1cm}

\paragraph{\textbf{AMS subject classification}} \ 34E10, 37L05, 37L65,
90B50, 90C29, 90C31, 91A12, 91A35, 91B06, 91B55.

\markboth{H. ATTOUCH, G. GARRIGOS, X. GOUDOU}
  {Dynamical Pareto-optimization with nonsmooth convex objective functions}

\maketitle

\section*{Introduction}

Throughout the paper, $\mathcal H$ is a real Hilbert space with scalar product and norm denoted by  $\left\langle  \cdot, \cdot \right\rangle $ and 
$\|\cdot\| = \sqrt{\left\langle  \cdot,\cdot \right\rangle} $  respectively.
We are interested with a gradient-like dynamical approach to the Pareto optima of the Constrained Multiobjective Optimization problem ((CMO) for short)
\begin{equation*}
{\rm (CMO)} \quad \min \left\{ F(v): \  v\in K \right\}
\end{equation*}
where $F: \mathcal H \rightarrow {\mathbb R}^q$,  \   $F(v)= \left(f_i (v) \right)_{i=1,...,q}$, $q\in \mathbb N^*$. 
Working in a general Hilbert space (possibly infinite dimensional) covers both applications in decision sciences and engineering.
We make the following standing assumptions on the  multiple objective functions $(f_i)_{i=1,2,..,q}$,   and constraint $K$:

\smallskip

H0) $K\subset \mathcal H$ is a closed convex nonempty set. 

\smallskip

\noindent For each  \ $i=1,2,...,q$,  \ $f_i: \mathcal H \rightarrow \mathbb R$ is a real-valued function  which satisfies:

\smallskip

H1)  $f_i$  is  convex continuous. It is supposed to be Lipschitz continuous on bounded sets. Equivalently, its subdifferential $\partial f_i : \mathcal H \rightarrow  2^{\mathcal H}$ is bounded on bounded sets;

\smallskip

H2)  $f_i$ is bounded from below on $\H$.

\smallskip

We are interested in this paper with the \textit{lazy} solutions (also called slow solutions, see \cite[Ch. 6, section 8]{AE}) of  the  differential inclusion
\begin{equation}\label{mult-steep-desc2}
 \dot u(t) + N_K(u(t)) + \mbox{Conv}\left\{\partial f_i(u(t))\right\} \ni 0,\\
\end{equation}
which is governed by  the  sum  of the two set-valued operators $u \mapsto N_K(u)$, and  $u\mapsto \mbox{Conv}\left\{\partial f_i(u)\right\} $. 
For $u \in K$, $N_K(u)$ is the  normal cone to $K$ at $u$, a closed convex cone modeling the \textit{contact forces} which are attached to the constraint $K$. Besides, $\mbox{Conv}\left\{\partial f_i(u)\right\}$ denotes the closed convex hull of the sets $\left\{\partial f_i(u); \ i=1,...,q\right\}$, and models the \textit{driving forces} which govern our system. 

\noindent Lazy solution means that the trajectory chooses a velocity which has minimal norm among all possible directions offered by the differential inclusion. 
This type of differential inclusion  occurs in various domains (mechanics, economics, control...), and has subsequently be the object of active research, see for example  \cite{AD}, \cite{AC}, \cite{BCS}, \cite{CS}.
Precisely, for any $u\in K$, the set $-N_K(u) -\mbox{Conv}\left\{\partial f_i(u) \right\}$
 is a closed convex set, therefore it has a unique element of minimal norm, denoted as usual $ \left(-N_K(u) - \mbox{Conv}\left\{\partial f_i(u) \right\} \right)^{0} $. 
The direction $s(u):=\left(-N_K(u) - \mbox{Conv}\left\{\partial f_i(u) \right\} \right)^{0}$ is  called the \textit{multiobjective steepest descent direction} at $u$, and the associated dynamical system 
\begin{equation}\label{mult-steep-desc}
\mbox{{\rm (MOG)}} \quad   \dot u(t) + \bigg(N_K(u(t)) + \mbox{Conv}\left\{\partial f_i(u(t)) \right\} \bigg)^{0}= 0,
\end{equation}
is called the  Multi-Objective Gradient system, (MOG) for short.
It was first investigated  by  Henry \cite{Hen},  Cornet  \cite{Corn1}, \cite{Corn2}, \cite{Corn3}, and Smale \cite{Sm1} in the seventies (in the finite dimensional case, 
and in the case where the objective functions $f_i$ are smooth), as a dynamical model of allocation of resources in  economics (planification procedure).

\if{
\noindent The description of our dynamic system involves the following convex sets in $\mathcal H$: \ for $u \in K$, 
$$
N_K(u) = \{ z\in \mathcal H: \ \left\langle  z,v -u \right\rangle \leq 0 \quad \mbox{for any} \ v \in K \}
$$
is the  normal cone to $K$ at $u$. It is a closed convex cone which models the \textit{contact forces} which are attached to the constraint $K$. 

To develop gradient-like methods involving nondifferentiable convex objective functions, we use the  classical notion of subdifferential. Recall that,  for a convex continuous function $f: \mathcal H \rightarrow \mathbb R$, the subdifferential of $f$ at  $u\in \mathcal H$, is the nonempty closed convex and bounded set
\begin{equation}
\label{e:subdiff}
\partial f (u)= \left\{ z \in  \mathcal H :      f(\xi) \geq f(u) + \left\langle  z  ,  \xi -u  \right\rangle  \quad\forall \xi \in \mathcal H \right\}.
\end{equation}
When $f$ is differentiable at $u$, it coincides with the gradient of $f$ at $u$.\\
Denote by $\mathcal{S}^q=\{ \theta=(\theta_i) \in \R^q : 0 \leq \theta_i \leq 1, \ \sum_{i=1}^{q} \theta_i =1 \}$  the unit simplex  in $\R^q$.  
Then, for $u \in \mathcal H$, 
 $$
\mbox{Conv}\left\{\partial f_i(u); \ i=1,...,q \right\} = \{ \sum_{i=1}^{q} \theta_i   \xi_i  : \ {\xi}_i \in \partial f_i(u), \ (\theta_i)\in \S^q \}
$$
is the closed convex hull of the sets $\left\{\partial f_i(u); \ i=1,...,q\right\}$. It models the \textit{driving forces} which govern our system. 
To simplify the notation, we denote this set by $\mbox{Conv}\left\{\partial f_i(u)\right\}$, or in a more concise way by $\mathcal{C} (u)$.
For any closed convex set $C\subset \mathcal H$ we denote by
$$
C^{0} = \mbox{proj}_C 0
$$
the projection of the origin onto $C$, which is the element of minimal norm of $C$.

We are interested in this paper with the \textit{lazy} solutions (also called slow solutions, see \cite[Ch. 6, section 8]{AE}) of  the  differential inclusion
\begin{equation}\label{mult-steep-desc2}
 \dot u(t) + N_K(u(t)) + \mbox{Conv}\left\{\partial f_i(u(t))\right\} \ni 0,\\
\end{equation}
which is governed by  the  sum  of the maximal monotone operator $ N_K$, and the multivalued upper semicontinuous operator  $v\mapsto \mbox{Conv}\left\{\partial f_i(v)\right\} $. 
Lazy solution means that the trajectory chooses a velocity which has minimal norm among all possible directions offered by the differential inclusion. 
This type of differential inclusion  occurs in various domains (mechanics, economics, control...), and has subsequently be the object of active research, see for example  \cite{AD}, \cite{AC}, \cite{BCS}, \cite{CS}.
Precisely, for any $u\in K$, the set $-N_K(u) -\mbox{Conv}\left\{\partial f_i(u) \right\}$
 is a closed convex set. It has a unique element of minimal norm, $s(u):= \left(-N_K(u) - \mbox{Conv}\left\{\partial f_i(u) \right\} \right)^{0} $,
 called the \textit{multiobjective steepest descent direction} at $u$.
The dynamical system which is governed by the vector field $u\mapsto s(u) $
\begin{equation}\label{mult-steep-desc}
\mbox{{\rm (MOG)}} \quad   \dot u(t) + \bigg(N_K(u(t)) + \mbox{Conv}\left\{\partial f_i(u(t)) \right\} \bigg)^{0}= 0,
\end{equation}
is called the  Multi-Objective Continuous Steepest descent, (MOG) for short.
It was first investigated  by  Henry \cite{Hen},  Cornet  \cite{Corn1}, \cite{Corn2}, \cite{Corn3}, and Smale \cite{Sm1} in the seventies (in the finite dimensional case, 
and in the case where the objective functions $f_i$ are smooth), as a dynamical model of allocation of resources in  economics (planification procedure).
}\fi

\noindent  From the point of view of modeling, we will show that the (MOG) system has the  following  properties:

a) It is a descent method, i.e., for \textit{each} $i=1,...,q$,  \   $t \mapsto f_i(u(t))$ is  nonincreasing.

b) Its trajectories converge to weak Pareto optimal points.

c) The scalarization of the multiobjective optimization problem is done endogeneously. At time $t$, the vector field which governs the system involves a convex combination $ \sum_{i=1}^{q} \theta_i (t) \partial f_i(\cdot)$ of the subdifferential  $\partial f_i(\cdot)$, with scalars $\theta_i (t)$ which 
are not fixed in advance. They are part of the process, whence the decentralized features of this dynamic. 

  Indeed, this system  provides a weighting of the different criteria which offers applications
in various domains, and which are still largely to explore. In inverse problems, signal/imaging processing,  putting  convenient weights on the data fitting term, and the regularization, or sparsity term is a central question.  In game theory, economics, social science, management, the multiobjective steepest descent direction has attractive properties: it improves each of the objective functions, while putting a higher weight on the \textquotedblleft weakest\textquotedblright  agents, a key property of the interaction between cooperating agents. 
% Moreover, for each trajectory, the limiting Pareto equilibrium is not far from the initial Cauchy data, in some simple situations it is the projection of the initial data on the Pareto set.
In addition, for each trajectory, the Pareto  equilibrium which is finally reached, is not too far from the initial Cauchy data. In some particular situations, it is the projection of the initial data on the Pareto set.

Mathematical analysis of (MOG) gives rise to general statements whose formulation is simple, but some proofs are quite technical. The dynamic is governed by a vector field, $u\mapsto s(u)$, which is discontinuous. Indeed, the  multivalued  operators $ u \mapsto N_K(u)$ and $u\mapsto \mbox{Conv}\left\{\partial f_i(u)\right\}$ are only upper semicontinuous (with closed graphs). Moreover, in general,  $u\mapsto s(u)$ is not a gradient vector field, nor Lipschitz continuous, and $u\mapsto -s(u)$ is not a monotone operator. 
Let us list our main results concerning the  (MOG) dynamical system. A section is devoted to each of them.
 
i) In {Theorem} \ref{steep;desc;dir}, section \ref{S:Pareto},  based on von Neumann's minmax duality theorem, we provide   equivalent formulations of the multiobjective
 steepest descent direction: instead of the subdifferential operators, they make use of the directional derivatives of the objective functions.

ii) In Theorem \ref{asymp1}, section \ref{asp},  we prove the weak convergence of the trajectories of  (MOG) to weak Pareto optimal  solutions of the  constrained multiobjective optimization problem (CMO). 
Our proof is in the line of the proof of convergence of the steepest descent by Goudou and Munier \cite{GM} in the case of a single (quasi-convex) objective function; it makes use of Lyapunov analysis and  Opial's lemma.

iii) In Theorem \ref{basic-exist}, section \ref{ex-yo}, assuming further that $\mathcal H$ is finite dimensional,  we prove the existence of strong global solutions to (MOG).
This is the more delicate part of the mathematical analysis.
We provide a constructive proof which is based on the 
 Yosida approximation of $\partial f_i$, and Peano existence theorem for differential equations. 
 The regularized equations are  relevant of the existence results which have been obtained in \cite{AG1}.
 The difficult point is to pass to the limit on the regularized differential equations, as the regularization parameter goes to zero, because the vector field which governs our dynamic is not continuous, nor monotone. 

iv) In section \ref{mod}, some  modeling and numerical aspects  are discussed for the (MOG) system. We first consider some connections between (MOG) and modeling in cooperative games, and inverse problems  (signal/imaging processing). Then,  by time discretization of  (MOG), we introduce numerical algorithms for nonsmooth constrained multiobjective optimization, and make the link with the recent studies  of Fliege and Svaiter \cite{FS}, Grana Drummond and  Svaiter \cite{DS},  Bonnel, Iusem and Svaiter \cite{BIS}.

We end with a conclusion and some perspectives.

% Finally, we discuss open problems and outline some perspectives.

%%%%%%%%%%%%%%%%%%%%%%%%%%%%%%%%%%%
%%%%%%%%%%%%%%%%%%%%%%%%%%%%%%%%%%%%%
%%%%%%%%%%%%%%%%%%%%%%%%%%%%%%%%%%%
\section{Pareto optimality and multiobjective steepest descent}\label{S:Pareto}

As a preliminary, let us make precise some classical notions of variational analysis. Given a proper lower-semicontinuous convex function $f : \H \longrightarrow \R \cup \{+\infty\}$, its \textit{subdifferential} $\partial f(u)$ is the closed convex subset of $\H$  defined for any $u \in \H$ by
\begin{equation*}
\label{e:subdiff}
\partial f (u)= \left\{ p \in  \mathcal H :      f(v) \geq f(u) + \left\langle  p  ,  v -u  \right\rangle  \quad\forall v \in \mathcal H \right\}.
\end{equation*}
In the special case where $f=\delta_K$ is the indicator function  of a nonempty closed convex set $K \subset \H$,
 the subdifferential of $\delta_K$ at $u\in K$ is the \textit{normal cone} to $K$ at $u$, denoted $N_K(u)$. The subdifferential operator enjoys the following additivity rule: 
let $f$ and $g$ be two proper lower-semicontinuous convex functions such that one of them is continuous at a point belonging to the domain of the other, then  
$$ \partial (f+g) (v) = \partial f(v) + \partial g(v) \ \text{ for all } v \in \H.$$
Suppose now that $f$ is locally Lipschitz continuous. Then the subdifferential of $f$ at  $u\in \mathcal H$ is a nonempty closed convex and bounded set. It is also interesting to consider the \textit{directional derivative} of $f$ at $u\in\H$ in the direction $d\in\H$, defined by 
$$df(u,d)=\ \underset{t \downarrow 0}{\text{\rm lim}} \ \frac{f(u+td) - f(u)}{t}.$$
\noindent
 It is in duality with the subdifferential since $df(u,d)$ is equal to $\sup \{  \langle p,d \rangle  :  p \in \partial f(u) \}$.
Thus, in our context, this directional derivative takes only finite values. Furthermore, for all $u\in \H$, $df(u, \cdot )$ is convex and Lipschitz continuous on $\H$.

\subsection{Pareto optimality} 
%When dealing with the problem $(P)$, that is minimizing simultaneously various cost functions, we look for a solution in the sense that it is no longer possible to improve the cost functions without %penalize one of them. This is a cooperative approach, modeled as follows by the notion of Pareto optimality.

When considering problem (CMO), which is to minimize
various cost functions on $K$, we seek a solution in the sense of Pareto, i.e., none of the objective functions can be improved in value without degrading some of the other objective values.
It is a cooperative approach, the  mathematical formulation  is described below.
 
\begin{definition}\label{Pareto2} (Pareto optimality)

i)  An element $ u \in K$  is called Pareto
optimal if  there does not exist  $v \in K$  such that $f_i(v) \leq f_i(u)$ for all $i=1,...,q$, and $f_j(v) < f_j(u)$ for one $j\in {1,...,q}$.

ii) An element $ u \in K$  is called weak Pareto 
optimal if   there  does not exist  $v \in K$  such that $f_i(v) < f_i(u)$ for all $i=1,...,q$.
\end{definition}
\noindent We equip  $\R^q$ with the order $y \preceq z \Leftrightarrow y_i \leq z_i$  \ for all    \ $i=1,...,q$,  and the strict order relation $y \prec z \Leftrightarrow y_i < z_i$  \ for all    \ $i=1,...,q$.
Then Pareto optimality admits an equivalent formulation: 
$ u \in K$ is Pareto optimal iff there  does not exist
$v \in K$  such that $F(v) \preceq  F(u)$ and $F(v) \neq F(u)$, while $u$ is a weak Pareto optimum if there  does not exist $v \in K$  such that $F(v) \prec  F(u)$. These optimality notions can be generalized by considering orders generated by convex cones (see \cite{Din}). 

In the case of a single objective function $f$, Pareto and weak Pareto optima  coincide with the notion of global minimizer. Writing the necessary optimality condition leads to the notion of critical point, namely $ N_K({u}) + \partial f(u) \ni 0$.
%The research for a global minimizer is generally focused by the study of critical points, which satisfy some necessary condition to be a global minimizer, like $ N_K({u}) + \partial f(x) \ni 0$. 
A similar approach exists for Pareto optimality: 

\begin{definition}\label{Pareto}
Denote by $\mathcal{S}^q=\{ \theta=(\theta_i) \in \R^q : 0 \leq \theta_i \leq 1, \ \sum_{i=1}^{q} \theta_i =1 \}$  the \textit{unit simplex}  in $\R^q$.
 We say that $u \in K$ is a  Pareto critical point of the constrained multiobjective optimization problem {\rm(CMO)}  
 if there exists $\left(\theta_{i} \right)\in \S^q$  such that 
\begin{equation}\label{asymp4} 
	 N_K({u}) + 	\sum_{i=1}^q \theta_{i}\partial f_i({u})  \ni 0.
\end{equation} 
\end{definition}

\noindent In the differentiable case, this notion has been considered by Smale in \cite{Sm1}, Cornet in \cite{Corn1}, \ see \cite{BM}, \cite{DS}, \cite{Din}   for recent account of this notion, and various extensions of it. It is a multiobjective extension of the Fermat rule, and (see below) a first-order necessary optimality condition for (local) vectorial optimization. 
Note that equivalent formulations of this notion can be given, thanks to the positive homogeneity property of the formula: the condition $\sum_{i=1}^q \theta_{i} =1$ can be dropped, just assuming the $\theta_{i}$ to be nonnegative, and at least one of them positive. 

Let us respectively denote by $\mathcal P$,   ${\mathcal P}_w$, and  ${\mathcal P}_c$ the set of Pareto optima, weak Pareto optima, and Pareto critical points. Clearly, $\mathcal P \subset {\mathcal P}_w$ always holds. In the case of convex multi-objective optimization, critical Pareto optimality is a necessary \textit{and sufficient} condition for weak Pareto optimality. Let us state it in a precise way.

 \begin{lemma}\label{equiv} 
Let $f_i: \mathcal H \rightarrow \mathbb R$, \ $i=1,...,q$  be   convex objective functions. Then 
 ${\mathcal P}_w  = {\mathcal P}_c$. Assuming further that the objective functions are strictly convex, then all these concepts of Pareto optimality coincide, i.e.,
  ${\mathcal P}={\mathcal P}_w={\mathcal P}_c$.
 \end{lemma}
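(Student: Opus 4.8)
The plan is to prove the two inclusions $\mathcal P_w \subseteq \mathcal P_c$ and $\mathcal P_c \subseteq \mathcal P_w$ separately, and then handle the strictly convex case by showing $\mathcal P_w \subseteq \mathcal P$ (the reverse $\mathcal P \subseteq \mathcal P_w$ being trivial).

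For $\mathcal P_w \subseteq \mathcal P_c$: Suppose $u \in K$ is a weak Pareto optimum. I would argue by a separation/convexity argument. Consider the convex set $C = \{ y \in \R^q : \exists v \in K, \ f_i(v) \le y_i \text{ for all } i \} - F(u) + \R^q_{++}$, or more cleanly, work directly with the function $v \mapsto \max_i (f_i(v) - f_i(u))$ over $v \in K$. Weak Pareto optimality of $u$ says precisely that this max is $\ge 0$ for all $v \in K$, i.e., $u$ minimizes $g(v) := \max_i(f_i(v) - f_i(u))$ over $K$. Since each $f_i$ is convex and finite-valued (hence continuous, by H1), $g$ is convex and continuous, so the Fermat rule plus the sum rule for subdifferentials (valid here by the continuity qualification stated in the preliminaries) gives $0 \in \partial g(u) + N_K(u)$. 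Then I invoke the Dubovitskii–Milyutin / max-rule formula for the subdifferential of a finite max of convex functions: $\partial g(u) = \mathrm{Conv}\{ \partial f_i(u) : i \in I(u)\}$ where $I(u) = \{ i : f_i(u) - f_i(u) = g(u)\}$. Since $g(u) = 0$, every index is active, so $\partial g(u) = \mathrm{Conv}\{\partial f_i(u)\} = \{\sum_i \theta_i \xi_i : \xi_i \in \partial f_i(u), \theta \in \S^q\}$. This yields exactly the existence of $\theta \in \S^q$ and $\xi_i \in \partial f_i(u)$ with $\sum \theta_i \xi_i \in -N_K(u)$, i.e. \eqref{asymp4}; hence $u \in \mathcal P_c$.

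For $\mathcal P_c \subseteq \mathcal P_w$: Suppose $u \in K$ satisfies \eqref{asymp4}, so there are $\theta \in \S^q$, $\xi_i \in \partial f_i(u)$, and $\nu \in N_K(u)$ with $\sum_i \theta_i \xi_i + \nu = 0$. Suppose toward a contradiction that $u$ is not weak Pareto optimal: there is $v \in K$ with $f_i(v) < f_i(u)$ for all $i$. By the subgradient inequality, $\langle \xi_i, v - u \rangle \le f_i(v) - f_i(u) < 0$ for each $i$, and by the definition of the normal cone $\langle \nu, v - u \rangle \le 0$. Taking the convex combination, $0 = \langle \sum_i \theta_i \xi_i + \nu, v - u \rangle \le \sum_i \theta_i (f_i(v) - f_i(u)) < 0$ (strict since all terms are negative and $\theta \in \S^q$), a contradiction. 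Hence $u \in \mathcal P_w$. Note this direction does not even use H1 beyond finiteness of the $f_i$.

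For the strictly convex case, $\mathcal P \subseteq \mathcal P_w$ is immediate from the definitions, so it suffices to show $\mathcal P_w \subseteq \mathcal P$. Let $u \in \mathcal P_w$ and suppose $u \notin \mathcal P$: there is $v \in K$ with $F(v) \preceq F(u)$, $F(v) \neq F(u)$. Set $w = \tfrac12(u+v) \in K$ by convexity of $K$. For each $i$, strict convexity gives $f_i(w) < \tfrac12 f_i(v) + \tfrac12 f_i(u) \le f_i(u)$ whenever $v \neq u$; and if $v = u$ then $F(v)=F(u)$, excluded. So either $v = u$ (impossible) or $w$ strictly improves every objective, contradicting weak Pareto optimality of $u$. (One small point: strict convexity gives the strict inequality only when $u \neq v$ as points of $\H$; the case $u=v$ is handled because then $F(v)=F(u)$.) Therefore $u \in \mathcal P$, and all three sets coincide.

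The main obstacle is the first inclusion: getting cleanly from weak Pareto optimality to the subdifferential condition \eqref{asymp4}. The reduction to minimizing the max-function $g$ is the natural device, but it requires the max-rule for subdifferentials and a subdifferential sum rule with a constraint set, both of which are available here only because of the finiteness/continuity hypothesis H1 — this is where that assumption is genuinely used. An alternative avoiding the max-rule is a direct Hahn–Banach separation between $\R^q_{--}$ and the set $\{(df_1(u,d),\dots,df_q(u,d)) : d \in T_K(u)\} + \R^q_+$ using the directional-derivative duality recalled in the preliminaries (and in the spirit of Theorem \ref{steep;desc;dir}); either route works, and I would present whichever keeps the separation argument shortest.
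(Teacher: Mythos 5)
Your proof is correct, and it diverges from the paper's in two of its three parts, in ways worth noting. For the inclusion $\mathcal P_w\subseteq\mathcal P_c$ the paper gives no argument at all: it simply cites Cornet, Luc and Bao--Mordukhovich for "a direct application of the Hahn--Banach separation theorem." Your reduction to the fact that a weak Pareto point minimizes $g(v)=\max_i\bigl(f_i(v)-f_i(u)\bigr)$ over $K$, followed by the Fermat rule, the sum rule, and the max-rule $\partial g(u)=\mbox{Conv}\{\partial f_i(u):i\in I(u)\}$ with all indices active, is a complete and self-contained substitute; it uses exactly the two calculus rules the paper itself records (the additivity rule in the preliminaries and the supremum rule quoted later in Remark~\ref{max function}), so it fits the paper's toolbox while actually supplying the missing proof. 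For $\mathcal P_c\subseteq\mathcal P_w$ you and the paper use the same scalarization idea; the paper phrases it as "$u$ is a global minimizer of $\sum_i\theta_i f_i$ over $K$" and derives the contradiction at the level of function values, whereas you work directly with the subgradient inequalities $\langle\xi_i,v-u\rangle\le f_i(v)-f_i(u)$ and $\langle\nu,v-u\rangle\le 0$ and sum them --- equivalent, and arguably more explicit about why criticality implies global minimality. For the strictly convex case the routes genuinely differ: the paper argues that $\sum_i\theta_i f_i$ is strictly convex so its minimizer over $K$ is unique, forcing $v=u$; you instead prove $\mathcal P_w\subseteq\mathcal P$ directly by the midpoint argument $f_i\bigl(\tfrac12(u+v)\bigr)<\tfrac12 f_i(v)+\tfrac12 f_i(u)\le f_i(u)$, which bypasses the criticality characterization entirely and correctly isolates the degenerate case $v=u$ via $F(v)=F(u)$. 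Both are valid; yours is more elementary, the paper's reuses the scalarized problem already set up.
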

 \begin{proof} 
 The inclusion ${\mathcal P}_w \subset{\mathcal P}_c$ is  obtained for convex functions by a direct application of the Hahn-Banach separation theorem (see for example \cite[Proposition 1.1]{Corn1},   \cite{Din}, \cite{BM}).
 Let us prove the reverse inclusion.
  Let $u \in {\mathcal P}_c$.  Then, $u$ is a (global) solution of the convex minimization problem 
 \begin{equation}\label{min-conv} 
	\min \left\{  \ 	\sum_{i=1}^q \theta_{i}f_i(v): \ v\in K\right\}
\end{equation}
for some $\theta_{i} \in [0,1]$ which are all  nonnegative, and at least one of them positive.
Indeed, (\ref{min-conv}) forces $u$ to be a weak Pareto minimum. Otherwise, there would exist some $v \in K$  such that $f_i(v) < f_i(u)$ for all $i=1,...,q$, which would  imply 
(one uses the  fact that at least one of the $\theta_{i}$ is positive)
 	$\sum_{i=1}^q \theta_{i} f_i(v) < 	\sum_{i=1}^q \theta_{i}f_i(u)$, a clear contradiction.   Now suppose that the objective functions are strictly convex, 
 	then  ${\sum\limits_{i=1}^{q} \theta_i f_i}$ is also strictly convex (we use again the  fact that at least one of the $\theta_{i}$ is positive), and $u$ is its unique minimizer over $K$.
 	 If we assume the existence of  $v \in K$ such that $f_i(v) \leq f_i(u)$ for all $i=1,...,q$, this would imply that $v$ is also a minimizer of ${\sum\limits_{i=1}^{q} \theta_i f_i}$ over $K$.
 	  Hence $v=u$, and $u$ is  Pareto optimal.
 	%Hence $u$ must be Pareto optimal because supposing moreover that there exists $j\in {1,...,q}$ such that $f_j(v) \neq f_j(u)$  would lead to a contradiction.
 \end{proof}
 
\subsection{Multiobjective steepest descent direction}

{

We discuss the concept of multiobjective descent direction, and present a multiobjective steepest descent direction, by analogy with the case of a single criterion.
\noindent Considering the problem of minimizing a single objective $f$ over $K$, we say that $d$ is a descent direction at $u\in K$ when $df(u,d) <0$, and $d$ lies in the closed convex \textit{tangent cone} to $K$ at $u$, which is defined as the \textit{polar cone} of $N_K(u)$ 
$$T_K(u) : = \{ v \in \H : \langle v, \eta \rangle \leq 0 \ \text{ for all } \eta \in N_K(u) \}.$$

\noindent Following Smale \cite{Sm1}, let us generalize this notion of descent direction for the multiobjective optimization problem (CMO).

\begin{definition}
Considering the problem (CMO), we say that $d \in \H$ is a multiobjective descent direction at $u\in K$ if $\ df_i(u,d)<0$ for each $i\in\{1,...,q\}$,
 and $d \in T_K(u)$.
\end{definition}

\begin{remark}
{\em Define the closed convex hull of the subdifferentials at $u\in \H$ $$\mbox{Conv}\left\{\partial f_i(u) ; \ i=1,...,q \right\} := \{ \sum_{i=1}^{q} \theta_i   p_i  : \ {p}_i \in \partial f_i(u), \ (\theta_i)\in \S^q \}.$$ To simplify the notation we just write $\mbox{Conv}\left\{\partial f_i(u)  \right\}$. Then, from the dual characterization $df_i(u,d) = \sup \{  \langle p,d \rangle  :  p \in \partial f(u) \}$ and the definition of $T_K(u)$, $d$ is a multiobjective descent direction at $u$ iff
$$ \langle p,d \rangle <0 \text{ and } \langle \eta , d \rangle \leq 0 \ \  \forall p \in \mbox{Conv}\left\{\partial f_i(u)  \right\}, \forall \eta \in N_K(u).$$
It is therefore clear that no multiobjective descent direction can be found at a critical Pareto $u \in \mathcal{P}_c$, since this is equivalent to $0 \in N_K(u) + \mbox{Conv}\left\{\partial f_i(u) \right\}$.}
\end{remark}

Let us define the vector field that governs our dynamical system.
\begin{definition}\label{mstdd}
For any $u\in K$, the unique element of minimal norm of the  closed convex set  $-N_K(u) -\mbox{Conv}\left\{\partial f_i(u) \right\}$
is called the \textit{multiobjective steepest descent direction} at $u$. It is denoted by 
 \begin{equation}\label{mult-steep-direction}
 s(u) := \bigg(-N_K(u) - \mbox{Conv}\left\{\partial f_i(u) \right\} \bigg)^{0}.
 \end{equation}
\end{definition}

\noindent Note that, for any $u\in K$, the set $-N_K(u) -\mbox{Conv}\left\{\partial f_i(u) \right\}$
 is a closed convex set, as being equal to the vectorial sum of two closed convex sets, one of them being bounded.
Hence,  it has a unique element of minimal norm, and $s(u)$ is well defined. See Theorem \ref{steep;desc;dir} for an equivalent formulation of the multiobjective steepest descent direction that makes use of dual notions, namely the directional derivatives of the objective functions, and  the tangent cone to $K$.
This vector field  clearly satisfies  $u \in \mathcal{P}_c \Leftrightarrow s(u)=0$.  Furthermore, for any  $u \notin \mathcal{P}_c$, $s(u)$ is a multiobjective descent direction:

\begin{proposition}\label{P:MultiSteepestDescent}For all $u\in K$ we have
\begin{equation}\label{DescentProperty}
\langle s(u),p\rangle \leq - \Vert s(u) \Vert^2 \ \text{ for all } p \in \mbox{Conv}\left\{\partial f_i(u) \right\}.
\end{equation} 
In particular, $s(u)$ is a multiobjective descent direction at any $u\in K \setminus \mathcal{P}_c$.
\end{proposition}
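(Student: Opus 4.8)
The plan is to exploit the fact that $s(u)$ is the element of minimal norm of the closed convex set $C := -N_K(u) - \mbox{Conv}\left\{\partial f_i(u)\right\}$, i.e. $s(u) = \proj_C 0$. First I would recall the standard variational characterization of the projection onto a closed convex set: since $s(u)$ is the projection of $0$ onto $C$, we have $\langle 0 - s(u), c - s(u)\rangle \leq 0$ for every $c \in C$, that is, $\langle s(u), c - s(u)\rangle \geq 0$, hence $\langle s(u), c\rangle \geq \|s(u)\|^2$ for all $c \in C$. Rewriting $c = -\eta - p$ with $\eta \in N_K(u)$ and $p \in \mbox{Conv}\left\{\partial f_i(u)\right\}$, this reads $-\langle s(u), \eta\rangle - \langle s(u), p\rangle \geq \|s(u)\|^2$ for all such $\eta, p$.

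The next step is to extract the two desired pieces of information from this single inequality by using the cone structure of $N_K(u)$. Since $N_K(u)$ is a cone, for fixed $p$ the quantity $-\langle s(u),\eta\rangle$ can be taken along $\lambda\eta$ for all $\lambda \geq 0$; letting $\lambda \to +\infty$ forces $-\langle s(u),\eta\rangle \leq 0$, i.e. $\langle s(u),\eta\rangle \geq 0$ for all $\eta \in N_K(u)$, which is exactly $s(u) \in T_K(u)$ (by the definition of the tangent cone as the polar of $N_K(u)$; note the sign convention $\langle v,\eta\rangle\le 0$ in the paper's definition of $T_K(u)$, so I should be careful — in fact one gets $-s(u)\in$ polar of $N_K(u)$, but since $T_K(u)$ is itself a cone the statement $s(u)\in T_K(u)$ should be read appropriately, or more cleanly: taking $\lambda\to 0^+$ and $\lambda\to\infty$ shows $\langle s(u),\eta\rangle = 0$ is not forced, only an inequality, so I will simply conclude $\langle\eta,s(u)\rangle\le 0$ for all $\eta\in N_K(u)$ directly from the cone argument with the correct sign). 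Taking then $\eta = 0$ in the master inequality yields $-\langle s(u), p\rangle \geq \|s(u)\|^2$, i.e. $\langle s(u), p\rangle \leq -\|s(u)\|^2$ for all $p \in \mbox{Conv}\left\{\partial f_i(u)\right\}$, which is precisely \eqref{DescentProperty}.

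Finally, to deduce that $s(u)$ is a multiobjective descent direction when $u \notin \mathcal{P}_c$: we already have $s(u) \in T_K(u)$ from the cone argument. For each $i$, picking $p \in \partial f_i(u)$ (a singleton choice inside $\mbox{Conv}\left\{\partial f_i(u)\right\}$) gives $\langle s(u), p\rangle \leq -\|s(u)\|^2 < 0$ once $s(u) \neq 0$; taking the supremum over $p \in \partial f_i(u)$ and using the duality $df_i(u,d) = \sup\{\langle p, d\rangle : p \in \partial f_i(u)\}$ yields $df_i(u, s(u)) \leq -\|s(u)\|^2 < 0$. Since $u \notin \mathcal{P}_c$ is equivalent to $s(u) \neq 0$ (noted just before the proposition), strict negativity holds for every $i$, so $s(u)$ is a multiobjective descent direction at $u$.

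I do not expect any serious obstacle here; the only point requiring care is the sign bookkeeping in the polar/tangent cone identity and making sure the homogeneity argument (sending $\lambda\eta$ with $\lambda\to\infty$) is applied to the correct term — everything else is a direct application of the projection inequality plus the first-order duality between $\partial f_i$ and $df_i(u,\cdot)$.
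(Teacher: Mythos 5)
Your proof is correct, and its first half coincides with the paper's: both derive \eqref{DescentProperty} from the variational characterization of the projection of the origin onto $-N_K(u)-\mbox{Conv}\left\{\partial f_i(u)\right\}$, specialized to points of the form $-p$ with $\eta=0\in N_K(u)$. Where you genuinely diverge is in establishing $s(u)\in T_K(u)$. The paper writes $s(u)=\bigl(z-N_K(u)\bigr)^{0}$ for some $z\in-\mbox{Conv}\left\{\partial f_i(u)\right\}$ (implicitly using that the minimum over the bounded closed convex set $\mbox{Conv}\left\{\partial f_i(u)\right\}$ is attained) and then invokes Moreau's decomposition theorem to identify this element with $\mbox{proj}_{T_K(u)}z$. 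You instead stay inside the single master inequality and exploit only the positive homogeneity of $N_K(u)$: from $-\lambda\langle s(u),\eta\rangle-\langle s(u),p\rangle\geq\|s(u)\|^2$ for all $\lambda\geq 0$ one must have $\langle s(u),\eta\rangle\leq 0$ for every $\eta\in N_K(u)$, which is verbatim the paper's definition of membership in $T_K(u)$ as the polar cone of $N_K(u)$ --- so your sign worries in the parenthetical are unfounded and the argument closes cleanly. Your route is more elementary (no Moreau decomposition, no attainment of the minimizing $z$); what the paper's version buys is the structural identity $s(u)=\mbox{proj}_{T_K(u)}z$, a formula in the spirit of what is exploited again in Theorem \ref{steep;desc;dir}. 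The final deduction, $df_i(u,s(u))=\sup\left\{\langle p,s(u)\rangle : p\in\partial f_i(u)\right\}\leq-\|s(u)\|^2<0$ whenever $s(u)\neq 0$, i.e. $u\notin\mathcal{P}_c$, is exactly as intended.
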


\noindent {\rm The above result has a simple geometrical interpretation. Take for simplicity the unconstrained problem, i.e., $K= \mathcal H$ and two criteria $f_1$, $f_2$.
Then  $-s(u)$ is the orthogonal projection of the origin on the vectorial segment $[\nabla f_1(u),\nabla f_2(u)]$. By the classical result on the sum of the angles of a triangle, this forces 
the angles bewteen $-s(u)$ and $\nabla f_i(u)$, $i=1,2$ to be accute, and hence $\left\langle  s(u), \nabla f_i(u) \right\rangle \leq 0$.}
In order to prove Proposition \ref{P:MultiSteepestDescent}, and in the following, we will make frequent use of the  Moreau decomposition theorem \cite{Mo}.

\begin{theorem} (Moreau)\label{Moreau} 
Let $T$ be a closed convex cone of a real Hilbert space $\mathcal H$, and $N$ be its polar cone, i.e., 
$N= \left\{v \in \mathcal H : \  \left\langle v ,  \xi   \right\rangle   \leq 0  \  \mbox{for all }   \  \xi \in T            \right\}$.
Then, for all $v\in \mathcal H$ there exists a unique decomposition
\begin{align*}
& v= v_T + v_N, \quad v_T\in T, \ v_N \in N; \\
& \left\langle v_T ,  v_N   \right\rangle  = 0.
\end{align*}
Moreover, $v_T= \mbox{{\rm proj}}_T (v)$,  and $v_N= \mbox{{ \rm proj}}_N (v)$.
\end{theorem}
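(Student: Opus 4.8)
The plan is to \emph{define} the two components by projection and then verify they have the required properties. Since $T$ is a nonempty (it contains the origin) closed convex set, the projection $\text{proj}_T$ onto $T$ is well defined; I set $v_T := \text{proj}_T(v)$ and $v_N := v - v_T$. All the work then consists in extracting the orthogonality relation and the inclusion $v_N\in N$ from the variational inequality characterizing the projection, namely $\langle v - v_T,\, z - v_T\rangle \le 0$ for every $z\in T$, exploiting that $T$, being a convex cone, is stable under multiplication by nonnegative scalars and under addition.

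First I would prove $\langle v_T, v_N\rangle = 0$: feeding the projection inequality with $z = 0\in T$ gives $\langle v_N, v_T\rangle \ge 0$, and with $z = 2v_T\in T$ gives $\langle v_N, v_T\rangle \le 0$, whence equality. Next, to obtain $v_N\in N$, I would feed the inequality with $z = v_T + w$ for an arbitrary $w\in T$ --- licit because a convex cone is closed under addition --- which yields $\langle v_N, w\rangle \le 0$ for all $w\in T$, i.e.\ $v_N\in T^{\circ} = N$. At this stage $v = v_T + v_N$ is a decomposition of the announced form.

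For uniqueness and for the identifications $v_T = \text{proj}_T(v)$ and $v_N = \text{proj}_N(v)$, I would show that \emph{any} pair $(a,b)$ with $v = a + b$, $a\in T$, $b\in N$, $\langle a,b\rangle = 0$ must coincide with $(v_T, v_N)$. Indeed, for every $z\in T$,
\[
\langle v - a,\, z - a\rangle = \langle b,\, z\rangle - \langle b, a\rangle = \langle b, z\rangle \le 0,
\]
the last inequality since $b\in N = T^{\circ}$ and $z\in T$; this is precisely the characterization of $\text{proj}_T(v)$, so $a = \text{proj}_T(v)$. Symmetrically, for every $z'\in N$,
\[
\langle v - b,\, z' - b\rangle = \langle a,\, z'\rangle - \langle a, b\rangle = \langle a, z'\rangle \le 0,
\]
since $a\in T$ and $z'\in N = T^{\circ}$; hence $b = \text{proj}_N(v)$. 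Therefore the decomposition exists, is unique, and equals $(\text{proj}_T(v), \text{proj}_N(v))$, as claimed.

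I do not anticipate a genuine obstacle --- the result is classical. The two points deserving a little care are: (i) remembering that a convex cone is stable under addition, so that $v_T + w\in T$ is a legitimate test point; and (ii) observing that the identification $v_N = \text{proj}_N(v)$ needs no bipolar theorem, the inclusion used there, ``$a\in T\Rightarrow\langle a, z'\rangle\le 0$ for all $z'\in N$'', being immediate from the very definition $N = T^{\circ}$ of the polar cone.
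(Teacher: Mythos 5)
Your proof is correct. Note that the paper itself offers no proof of this statement: it is quoted as Moreau's classical decomposition theorem with a citation to \cite{Mo}, so there is no argument in the text to compare yours against. What you have written is the standard, self-contained derivation: defining $v_T=\mbox{proj}_T(v)$, extracting $\langle v_T,v_N\rangle=0$ from the test points $z=0$ and $z=2v_T$, obtaining $v_N\in N$ from the test points $z=v_T+w$ with $w\in T$, and then getting uniqueness together with the identification $v_N=\mbox{proj}_N(v)$ by verifying the variational characterization of the projection for an arbitrary admissible pair $(a,b)$. The two points you flag as needing care (stability of a convex cone under addition, and the fact that no bipolar theorem is needed for the second projection) are exactly the right ones, and both are handled correctly.
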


\begin{proof}[Proof of Proposition \ref {P:MultiSteepestDescent}] We introduce  the notation $\C(u) = \mbox{Conv}\left\{\partial f_i(u); \ i=1,...,q \right\}$. By definition,  $-s(u)$ is the projection of the origin onto the closed convex set $N_K(u) + \C(u)$.
 Hence, using that $0\in N_K(u)$, we have for any $p \in \mathcal{C} (u)$ 
\begin{equation}\label{mult-opt4} 
	\left\langle  0 -( -s(u)), p - ( -s(u)) \right\rangle \leq 0,
\end{equation}
that is
\begin{equation}\label{mult-opt5} 
	\|s(u) \|^2  + 	\left\langle  s(u), p \right\rangle \leq 0
\end{equation}
which is the desired inequality. Verify now that $s(u)$ is a multiobjective descent direction at $u\in K \setminus \mathcal{P}_c$. By definition of $s(u)$, we can write
$s(u)= \bigg(z -N_K(u)\bigg)^{0}$ for some $z\in -\C(u)$.
Since $T_K(u)$ is the polar cone of $N_K(u)$, then by Moreau decomposition theorem,
\begin{align*}
\bigg(z -N_K(u)\bigg)^{0}&= z- \mbox{proj}_{N_K(u)}z \\
&=  \mbox{proj}_{T_K(u)}z
\end{align*} 
which shows that $s(u) \in T_K(u)$, and concludes the proof.
\end{proof}

Now that we have established that $s(u)$ is a multiobjective descent direction, one may wonder why it is called the \textit{steepest} descent direction. Observe first that in the case of a single differentiable objective function  $f$, and a constraint $K$, the direction $s(u)$ at $u\in K$ is given by
\begin{align}
s(u)&= (-N_K(u) - \nabla f(u)))^{0},\\
	&= \mbox{proj}_{T_K(u)}( - \nabla f(u)).
\end{align}
It is known that the normalized vector $\frac{s(u)}{\|s(u)\|}$ is the solution, when $\nabla f(u) \neq 0$, of the minimization problem
\[
	\min \left\{ \  df(u,d) :     \quad d\in  T_K(u), \  \|d\|=1                        \right\},
\]
whence the name of \textit{steepest descent direction} for $\mbox{proj}_{T_K(u)}( - \nabla f(u))$. As shown below, this steepest descent property can be extended to the multiobjective case:
\begin{equation}\label{E:Charac00}
\ \frac{s(u)}{\|s(u)\|} =   \mbox{\rm argmin} \left\{   \max\limits_{i=1,...,q} df_i(u,d) : \ d\in  T_K(u), \  \|d\|=1      \right\}.
\end{equation}
\noindent Moreover, in the case of a single differentiable objective function, it can be easily verified that 
\begin{equation}\label{E:Charac2}
	s(u)= \mbox{\rm argmin} \left\{ \frac{1}{2} \| v \|^2 +  df(u,d) :     \quad  d\in  T_K(u)                      \right\},
\end{equation}
and this further characterization will also  be generalized to the multiobjective case.
In addition, we make a link between the formulations (\ref{E:Charac2}) and (\ref{E:Charac00}), by introducing a continuum of characterizations  based on 
the use of
$\| \cdot \|^r$, these two situations corresponding  to $r=2$, and  the limiting case $r= +\infty$.
% Moreover we show that (\ref{E:Charac2}) is one among a continuum of characterizations, which converges in some sense to (\ref{E:Charac00}).
%
\begin{theorem}\label{steep;desc;dir} Let $u\in K \setminus \mathcal{P}_c$.    Then $s(u)$ can be formulated in the following equivalent forms: 
\begin{align}
1.& \ s(u)= \bigg(-N_K(u) - \mbox{{\rm Conv}}\left\{\partial f_i(u) \right\} \bigg)^{0}\label{D:FirstForm} \\
2.& \ \frac{s(u)}{\|s(u)\|^{\frac{r-2}{r-1}}}= \underset{d\in T_K(u)}{\mbox{{\rm argmin}}}  \left\{ \frac{1}{r} \| d \|^r     + \max\limits_{i=1,...,q} df_i(u,d) \right\} \ \text{for all } \ r \in ]1,+\infty[ \label{D:SecondForm}\\
3.& \ \frac{s(u)}{\|s(u)\|} =   \underset{\underset{\Vert v \Vert =1}{v\in T_K(u)}}{\mbox{\rm argmin}} \left\{   \max\limits_{i=1,...,q} df_i(u,d) \right\} . \label{ThirdForm}
\end{align}
\end{theorem}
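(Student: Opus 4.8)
The plan is to prove the chain of equivalences by first establishing the form (\ref{D:SecondForm}) for a fixed $r \in \, ]1,+\infty[$, then recovering (\ref{D:FirstForm}) as the special case $r=2$ (combined with a rescaling argument to handle general $r$), and finally passing to the limit $r \to +\infty$ to get (\ref{ThirdForm}). The workhorse will be von Neumann's minmax theorem applied to a bilinear form on a product of convex sets, together with the duality $df_i(u,d) = \sup\{\langle p,d\rangle : p \in \partial f_i(u)\}$ recalled earlier, which lets us replace $\max_{i} df_i(u,d)$ by $\sup\{\langle p,d\rangle : p \in \C(u)\}$ where $\C(u) = \mathrm{Conv}\{\partial f_i(u)\}$. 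Indeed, since $\max_i df_i(u,d) = \max_i \sup\{\langle p,d\rangle : p \in \partial f_i(u)\} = \sup\{\langle p,d\rangle : p \in \bigcup_i \partial f_i(u)\}$, and this last supremum over a set equals the supremum over its closed convex hull, we get $\max_i df_i(u,d) = \sup_{p \in \C(u)} \langle p,d\rangle$.

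First I would fix $r \in \, ]1,+\infty[$ and consider the function $\Phi_r(d) = \frac{1}{r}\|d\|^r + \sup_{p \in \C(u)}\langle p,d\rangle$ on the closed convex set $T_K(u)$. This is a strictly convex, coercive (since $r>1$), lower-semicontinuous function, so it admits a unique minimizer $d_r$ over $T_K(u)$; moreover writing it as $\sup_{p\in\C(u)} \left(\frac1r\|d\|^r + \langle p,d\rangle\right)$ over $d \in T_K(u)$, and using that the inner function is convex in $d$ and affine (hence concave) in $p$, with $\C(u)$ compact convex and $T_K(u)$ closed convex, Sion's minmax theorem allows exchanging $\inf_d$ and $\sup_p$. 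So $\min_{d \in T_K(u)} \Phi_r(d) = \sup_{p\in\C(u)} \inf_{d \in T_K(u)}\left(\frac1r\|d\|^r + \langle p,d\rangle\right)$. For each fixed $p$, the inner infimum is attained, and — using the Moreau decomposition theorem (Theorem~\ref{Moreau}) with the cone pair $T_K(u), N_K(u)$ — one identifies the minimizer as a positive multiple of $\mathrm{proj}_{T_K(u)}(-p) = (-N_K(u) - p)^0$; standard first-order conditions then show $d_r = \lambda_r (-N_K(u) - p_r)^0$ for the optimal $p_r$, where the scalar $\lambda_r$ absorbs the homogeneity, and that $p_r$ achieves $\min_{p\in\C(u)}\|(-N_K(u)-p)^0\|$. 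This minimal-norm quantity over $p \in \C(u)$ is precisely $\|s(u)\|$ by Definition~\ref{mstdd} (the minimal-norm element of $-N_K(u)-\C(u)$), giving $d_r = \|s(u)\|^{-(r-2)/(r-1)} s(u)$ after computing the scaling exponent from the stationarity relation $\|d_r\|^{r-1} = \|s(u)\|$; this is exactly (\ref{D:SecondForm}), and setting $r=2$ recovers the consistency with (\ref{D:FirstForm}).

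Finally, for (\ref{ThirdForm}), I would take the limit $r \to +\infty$ in (\ref{D:SecondForm}): the normalized minimizers $d_r/\|d_r\|$ converge to $s(u)/\|s(u)\|$, and the constrained problem $\min\{\frac1r\|d\|^r + \max_i df_i(u,d) : d \in T_K(u)\}$ $\Gamma$-converges, on the unit sphere, to $\min\{\max_i df_i(u,d) : d \in T_K(u), \|d\|=1\}$ — intuitively, the penalty $\frac1r\|d\|^r$ forces $\|d\| \le 1$ in the limit, and since $\max_i df_i(u,d) < 0$ on a descent direction (as $u \notin \mathcal{P}_c$ guarantees descent directions exist by Proposition~\ref{P:MultiSteepestDescent}) the optimal $d$ saturates $\|d\|=1$. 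Alternatively, and more cleanly, one can prove (\ref{ThirdForm}) directly by the same minmax/Moreau argument applied to $\min\{\max_i df_i(u,d) : d\in T_K(u), \|d\|=1\} = \min_{\|d\|=1, d\in T_K(u)} \sup_{p\in\C(u)}\langle p,d\rangle = \sup_{p\in\C(u)} \min_{\|d\|=1, d\in T_K(u)}\langle p,d\rangle = \sup_{p\in\C(u)} \left(-\|\mathrm{proj}_{T_K(u)}(-p)\|\right) = -\|s(u)\|$, with the minimizing direction being $-\mathrm{proj}_{T_K(u)}(-p_\star)/\|\cdot\| = s(u)/\|s(u)\|$.

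The main obstacle I expect is \emph{justifying the application of the minmax theorem} and, relatedly, the interchange of $\inf$ and $\sup$: one must verify the precise compactness/continuity/convexity hypotheses (compactness of $\C(u)$ comes from H1, i.e. $\partial f_i$ bounded; lower-semicontinuity and coercivity in $d$ from $r>1$; but $T_K(u)$ is only closed, not compact, so one restricts to a bounded sublevel set or argues coercivity carefully), and then correctly extract from the saddle point both the optimal direction and the optimal weight vector $\theta \in \S^q$, tracking the homogeneity-induced scalar $\lambda_r$ through the stationarity conditions to pin down the exponent $(r-2)/(r-1)$. The case $u \notin \mathcal{P}_c$, equivalently $s(u) \neq 0$, is used throughout to divide by $\|s(u)\|$ and to ensure the limiting problem in (\ref{ThirdForm}) has strictly negative value so that the sphere constraint is active.
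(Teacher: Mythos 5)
Your plan follows essentially the same route as the paper's proof: the support-function identity $\max_i df_i(u,d)=\sup_{p\in\mathcal C(u)}\langle p,d\rangle$, a minmax/saddle-point argument on $T_K(u)\times\mathcal C(u)$, Moreau's decomposition to identify the inner minimizer $d(p)=\|d(p)\|^{2-r}\left(-p-N_K(u)\right)^0$, identification of the optimal $\bar p$ as the minimizer of $\|\left(-p-N_K(u)\right)^0\|$ with the stationarity relation $\|d(\bar p)\|^{r-1}=\|s(u)\|$ pinning down the exponent, and epi-($\Gamma$-)convergence as $r\to+\infty$ for the third form. The only caveat is your ``alternative'' direct derivation of the third form, where you swap $\min$ and $\sup$ over the non-convex sphere $\{\|d\|=1\}$; that step should be run on the ball $\{\|d\|\le 1\}$ and combined with a saturation argument (available since $s(u)\neq 0$), but your primary $r\to+\infty$ route is exactly the paper's.
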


\begin{remark}\label{R:DualForms} {\rm The equivalence between formulations 1. and 3. of the steepest descent direction has been first obtained, in the finite dimensional and smooth case, by 
 Cornet in \cite[Proposition 3.1]{Corn1}. Formulation 2 appears in   Fliege and Svaiter \cite{FS} in the smooth case, for $r=2$ : 
\begin{equation*}
s(u) = \underset{d\in T_K(u)}{\mbox{{\rm argmin}}}  \left\{ \frac{1}{2} \| d \|^2     + \max\limits_{i=1,...,q} df_i(u,d) \right\},
\end{equation*}
but the equivalence between formulations 1. and 2. is seemingly new.
The second formulation for $r\neq 2$ is new, although it was stressed in \cite{FS} that $\frac{1}{2} \| d \|^2$ could be replaced by any positive proper l.s.c strictly convex function which is dominated by the norm around the origin. The interest of considering $r$ arbitrary large is that we can see -at least formally- the third formulation as the limit of the second when $r \to +\infty$ : $\frac{r-2}{r-1}$ tends to $1$, while the function $ \frac{1}{r} \| \cdot \|^r$ is pointwise converging to the indicator function of the unit ball $\delta_{\mathbb B (0,1) }(\cdot)$.
}
\end{remark}
The following proof is based on duality arguments (von Neumann's min-max theorem) which were first introduced in the smooth differentiable case  in \cite{Corn1}. The extension of these results to the non-smooth case is nontrivial and requires some adjustments. In addition,  the third formulation is obtained from the second, using an epiconvergence ($\Gamma$-convergence) argument.
\begin{proof}[Proof of Theorem \ref{steep;desc;dir}] 
In all that follows, $\mathcal{C}(u)$ denotes $\mbox{{\rm Conv}}\{\partial f_i (u)\}$, where $u$ is a fixed element of $K \setminus \mathcal{P}_c$. As a consequence, $s(u) \neq 0$. 

 Let us start by proving item 2. Because of the  powered norm  term, 
 $d \mapsto \frac{1}{r} \| d \|^r     + \max\limits_{i=1,...,q} df_i(u,d)$ is a coercive strictly convex function.
  Therefore, there exists a unique solution $ \bar{d}$ to the minimization problem
 \begin{equation}\label{sv1}
 \min _{d\in  T_K(u)}  \left\{ \frac{1}{r} \| d \|^r    + \max\limits_{i=1,...,q} \max_{p_i\in  \partial f_i(u)}  \left\langle  p_i, d \right\rangle \right\}.
 \end{equation}
 Let us show that $ \bar{d}= \frac{s(u)}{\|s(u)\|^{\frac{r-2}{r-1}}}$.
We use a duality argument which relies on the equivalent formulation of (\ref{sv1}) as the convex-concave saddle value problem
 \begin{equation}\label{sv2}
 	\min_{d\in  T_K (u)}  \  \max_{p \in  \mathcal{C}(u)}  \ \left\{   \frac{1}{r} \| d \|^r  + \left\langle   p, d \right\rangle \right\}.
\end{equation}
It is
associated with the convex-concave Lagrangian function
\[
L(d, p)=  \frac{1}{r} \| d \|^r +  \left\langle p, d \right\rangle
\]
defined on $T_K (u) \times \mathcal{C}(u)$.
Since  $L$ is convex and coercive with respect to the first variable, and  $\mathcal{C}(u)$ is bounded,  by the von Neumann's minimax theorem (see \cite[Theorem 9.7.1]{ABM})
there exists $\bar p  \in \mathcal{C}(u)$ such that $ (\bar{d},\bar{p})$ is a saddle point of (\ref{sv2}), that is 
\begin{equation}\label{sd6}
  \inf_{d \in   T_K(u)}  L(d , \bar{p})=L( \bar{d}, \bar{p})=\sup_{p \in \mathcal{C}(u)}  L(\bar d, p).
\end{equation}
For any $p \in \mathcal{C}(u)$ let us define
\begin{equation}\label{sv3}
d(p):=\underset{d\in T_K(u)}{\text{argmin}} \left\{ \frac{1}{r} \Vert d \Vert^r + \langle p, d \rangle \right \}.
\end{equation}
Writing down the optimality condition for the above primal problem gives
\begin{equation}\label{sv4}
d(p)=\mbox{proj}_{T_K (u)} \left(\frac{-p}{\Vert d(p) \Vert^{r-2}}\right),
\end{equation}
which,  by Moreau's theorem, can be rewritten  as
\begin{equation}\label{sdd4}
d(p)=\frac{1}{\Vert d(p) \Vert^{r-2}} \left(-p -N_K(u)\right)^0.
\end{equation}

Observe that $(\bar d, \bar p)$ being a saddle point of $L$ implies $\bar d = d(\bar p)$. Thus we just need to prove that $d(\bar p)= \frac{s(u)}{\|s(u)\|^{\frac{r-2}{r-1}}}$.
 To identify $\bar{p}$, we use the dual formulation
\begin{equation}
\bar{p} = \underset{p \in \mathcal{C}(u)}{\mbox{argmax}} \min\limits_{d\in T_K (u)}\left\{  \frac{1}{r} \Vert d \Vert^r +\langle p,d\rangle \right \},
\end{equation}
which,  by (\ref{sv3}) and (\ref{sv4}), can be rewritten as
\begin{equation}
\bar p=\underset{p \in \mathcal{C}(u)}{\mbox{argmax}} \  \frac{1}{r} \Vert d(p) \Vert^r  -\Vert d(p) \Vert^{r-2} \langle \frac{-p}{\Vert d(p) \Vert^{r-2}}, \mbox{proj}_{T_K(u)} \left( \frac{-p}{\Vert d(p) \Vert^{r-2}} \right) \rangle.
\end{equation}
Using  Moreau's theorem, we obtain
\begin{equation}
\bar p=\underset{p \in \mathcal{C}(u)}{\mbox{argmax}} \  \frac{1}{r} \Vert d(p) \Vert^r  -\Vert d(p) \Vert^{r-2} \Vert \mbox{proj}_{T_K(u)} \left( \frac{-p}{\Vert d(p) \Vert^{r-2}} \right) \Vert^2,
\end{equation}
which, by (\ref{sv4}) and $r \in ]1,+\infty]$, is  equivalent to
\begin{equation}
\bar p=\underset{p \in \mathcal{C}(u)}{\mbox{argmin}} \ \Vert d(p) \Vert^{r-1}.
\end{equation}
From (\ref{sdd4},) we know that $\Vert d(p) \Vert^{r-1}= \Vert \left(-p -N_K(u)\right)^0 \Vert$. Therefore, $s(u)= \left(-\bar p -N_K(u)\right)^0$ with $\Vert d(\bar p) \Vert ^{r-1} = \Vert s(u) \Vert$. Using again (\ref{sdd4}), we obtain $ d(\bar{p})= \frac{s(u)}{\|s(u)\|^{\frac{r-2}{r-1}}}$, as expected.

Let us complete the proof by proving the third characterisation. As we said in Remark \ref{R:DualForms}, it relies on a limit argument. Define, for any $r>1$, the functions $F_r : d \in \H \mapsto \frac{1}{r}\Vert d \Vert^r + \max\limits_{i=1,...,q} df_i(u,d) + \delta_{T_K(u)}(d)$. It can be easily verified that the sequence $(F_r)_{r>1}$ epiconverges when $r\to +\infty$ to $$F : d \in \H \mapsto \delta_{\{\Vert \cdot \Vert \leq 1\}} (d) + \max\limits_{i=1,...,q} df_i(u,d) + \delta_{T_K(u)}(d).$$
From (\ref{D:SecondForm}) and \cite[Theorem 12.1.1]{ABM} we can deduce that
\begin{equation}\label{sdd5}
\frac{s(u)}{\|s(u)\|} =   \underset{\underset{\Vert v \Vert \leq 1}{v\in T_K(u)}}{\mbox{\rm argmin}} \left\{   \max\limits_{i=1,...,q} df_i(u,d) \right\},
\end{equation}
where the inequality constraint $\Vert v \Vert \leq 1$ can be replaced by $\Vert v \Vert = 1$, since $\frac{s(u)}{\|s(u)\|}$ is a normalized vector.

\end{proof}

}

\subsection{The Multi-Objective Gradient dynamic}

{

Here we present and discuss the continuous dynamic governed by the multiobjective steepest descent vector field $u \mapsto s(u)$.

In \cite{Sm1}, Smale defined the notion of \textit{gradient process} for the multiobjective optimization problem (CMO). It is a differential equation 
\begin{equation} \label{mult-opt2}
\dot{u}(t) = \phi (u(t))
\end{equation}
where $\phi: K \rightarrow \mathcal H$  is a mapping which satisfies the following properties:
\begin{equation}
  \label{eq:m-gradient}
  \left\{\begin{array}{l}
  \phi(u) \ \text{\rm is a multiobjective descent direction whenever } u \notin \mathcal{P}_c, \\
  \phi(u)=0 \ \mbox{ if}  \ u\in {\mathcal P}_c  \ .
  \end{array}\right.
\end{equation}
The interest of such a gradient process is twofold : the stationary points of the dynamic are exactly the critical Pareto points, and as long as $u(t)$ is not a critical Pareto point, all the objective functions are decreasing. Clearly, from its definition and Proposition \ref{P:MultiSteepestDescent}, the vector field $u \mapsto s(u)$ induces a gradient process, defined as follows :

\begin{definition}\label{D:MOGS}
The dynamical system which is governed by the vector field  $u \mapsto s(u)$, is called the  Multi-Objective Gradient system. Its solution trajectories
$t\mapsto u(t)$ verify
\begin{equation}\label{E:MOGS}
\mbox{{\rm (MOG)}} \quad   \dot u(t) + \bigg(N_K(u(t)) + \mbox{{\rm Conv}}\left\{\partial f_i(u(t)) \right\} \bigg)^{0}= 0.
\end{equation} 
\end{definition}

\begin{remark}
{\em 
Instead of considering the vector field $u\mapsto s(u)$ to govern our dynamic, we could have chosen one of the directions that appear in Theorem \ref{steep;desc;dir}. In fact, each of them induces a gradient process. From the viewpoint of the dynamic system, these directions generate the same integral curves, with a different time scale.
}
\end{remark}

In \cite{Sm1},  the vector field $\phi$ governing the gradient process is continuous, in a finite dimensional setting. In our context, the corresponding notions have been  extended  in order to cover dynamical systems governed by a discontinuous vector field on a general Hilbert space, as the (MOG) dynamic. In particular, instead of classical (continuously differentiable) solutions, we will consider strong solutions (absolutely continuous on bounded time intervals), the equality  (\ref{E:MOGS}) being satisfied almost everywhere. Let us make precise this (see \cite [Appendix]{Br}  for more details):

 \begin{definition} 
 \label{abs.cont.def} Given $T\in\mathbb{R}^{+}$, a function
$u:\left[0,T\right]\rightarrow \mathcal H$ is said to be absolutely continuous
if one of the following equivalent properties holds:

 $i)$ there exists an integrable function $g:\left[0,T\right]\rightarrow \mathcal H$
such that $u\left(t\right)=u\left(0\right)+\int_{0}^{t}g\left(s\right)ds \ \ \forall t\in\left[0,T\right];$

$ii)$ $u$ is continuous and its distributional derivative
belongs to the Lebesgue space $L^{1}\left(\left[0,T\right];\mathcal H\right)$; 

 $iii)$ for every $\epsilon>0$, there exists  $\eta>0$
such that for any finite family of intervals $I_{k}=\left(a_{k},b_{k}\right)$,  \
$I_{k}\cap I_{j}=\emptyset$ for $k\neq j$ and $\sum_{k} |b_{k}-a_{k}|\leq\eta\Longrightarrow\sum_{k} \|u\left(b_{k}\right)-u\left(a_{k}\right)\| \leq\epsilon.$
\end{definition}

\noindent  Moreover, an absolutely continuous function is
differentiable almost everywhere, its derivative coincide with its
distributional derivative almost everywhere, and one can recover the
function from its derivative $u^{\prime}=g$  using the integration formula 
$\left(i\right).$
We can now make precise the notion of solution for the (MOG) dynamic (recall that $\S^q$ denotes the unit simplex in $\R^q$).
 \begin{definition} \label{def.sol-basic} We say that  $u(\cdot)$ is
a strong global solution of (MOG)
if the following properties are satisfied:

 $\left(i\right)$ $u: [0,+\infty [ \rightarrow \mathcal H$
is  absolutely continuous on each  interval
$\left[0,T\right]$, $0<T<+\infty;$

 $\left(ii\right)$ there exists $\eta: [0,+\infty [ \rightarrow \mathcal H$,  $v_i: [0,+\infty [ \rightarrow \mathcal H$,  $\theta_i: [0,+\infty [ \rightarrow [0,1]$ i=1,2,...,q  
    \ which satisfy
 \begin{align}
 &  \theta_i \in L^{\infty}(0,+\infty; \R), \ \  \left(\theta_i(t)\right)\in \S^q \text{ for almost all } t>0;\\
 & v_i\in L^{\infty}(0,T; \mathcal H), \quad  \eta \in L^2(0,T; \mathcal H) \  \mbox{for all} \ T>0   \  \mbox{ and all} \  i=1,2,..., q; \\
 & \eta(t) \in N_K(u(t)), \  v_i(t) \in \partial f_i(u(t)) \quad \mbox{for almost all} \ t>0; \label{def1}\\
 &  \dot u(t) +  \eta (t) + \sum_i \theta_i (t) v_i(t)   = 0 \quad \quad \  \mbox{for almost all} \ t>0; \label{def2}\\
 &  \dot u(t) + ( N_K(u(t)) +    \mbox{{\rm Conv}}\left\{ \partial f_i(u(t)) \right\} )^{0}     = 0 \quad \mbox{for almost all} \ t>0.
 \end{align}
\end{definition} 
Now we can establish the first qualitative properties of strong solutions of (MOG). First, we show that trajectories satisfy a local Lipschitz continuity property. Second, as announced, we show that the objective functions are decreasing along the trajectories. 
\begin{proposition}\label{P:qualitative}
Let us make assumptions {\rm H0), H1)}. Then for any  strong global solution $t\in [0, +\infty [ \mapsto u(t) \in \mathcal H$ of {\rm(MOG)}, the following properties hold: 

\noindent 	$i)$ \textit{Descent property}: \ for each $i=1,...,q$,  \   $t \mapsto f_i(u(t))$ is  a nonincreasing absolutely continuous function,  and for almost all $t>0$
\begin{equation}\label{E:DecreaseProperty} 
	 \frac{d}{dt}  f_i(u(t)) \leq - \|\dot{u}(t)\|^2 .
\end{equation}

\noindent $ii)$ \textit{Lipschitz continuity}: \ The trajectory $u$ is Lipschitz continuous on any finite time interval $[0,T]$. If moreover it is bounded, $u$ is Lipschitz continuous on  $[0,+\infty[$.
\end{proposition}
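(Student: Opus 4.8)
The plan is to exploit the two defining relations \eqref{def2} and \eqref{E:MOGS} of a strong solution, together with the descent inequality \eqref{DescentProperty} from Proposition \ref{P:MultiSteepestDescent}, and basic facts about differentiating convex functions along absolutely continuous curves.

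\emph{Part i) (Descent property).} First I would record the chain rule for a locally Lipschitz convex function composed with an absolutely continuous curve: since $u$ is absolutely continuous on $[0,T]$, it takes values in a bounded set, and by H1) each $f_i$ is Lipschitz there, so $t\mapsto f_i(u(t))$ is absolutely continuous; moreover, for almost every $t$ at which both $\dot u(t)$ exists and $v_i(t)\in\partial f_i(u(t))$ is given by Definition \ref{def.sol-basic}, one has $\frac{d}{dt}f_i(u(t)) = \langle v_i(t),\dot u(t)\rangle$. (This is the standard fact that the derivative of $f_i\circ u$, when it exists, equals $\langle w,\dot u(t)\rangle$ for every $w\in\partial f_i(u(t))$, because $\partial f_i(u(t))$ is the set of slopes supporting $f_i$ at $u(t)$ and $f_i\circ u$ is differentiable there.) Now I observe that $v_i(t)\in\partial f_i(u(t))\subset \C(u(t))$, so by \eqref{DescentProperty} applied with $p=v_i(t)$ and using $s(u(t))=-\dot u(t)$ from \eqref{E:MOGS}, we get $\langle v_i(t),\dot u(t)\rangle = -\langle s(u(t)),v_i(t)\rangle \le -\|s(u(t))\|^2 = -\|\dot u(t)\|^2$, which is \eqref{E:DecreaseProperty}. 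Since the right-hand side is $\le 0$ almost everywhere and $f_i\circ u$ is absolutely continuous, $t\mapsto f_i(u(t))$ is nonincreasing.

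\emph{Part ii) (Lipschitz continuity).} The idea is to show $\|\dot u(t)\|$ is bounded on $[0,T]$. On $[0,T]$ the trajectory $u$ stays in a bounded set $B$ (an absolutely continuous function on a compact interval has bounded range). By H1), $\partial f_i$ is bounded on $B$: there is $M$ with $\|v_i(t)\|\le M$ for all $i$ and a.e.\ $t\in[0,T]$; hence every element of $\C(u(t))$ has norm $\le M$. Since $\dot u(t)=-s(u(t))$ is the element of minimal norm of $-N_K(u(t))-\C(u(t))$, and this set contains $-v(t)$ for any selection $v(t)\in\C(u(t))$ (taking the zero element of the cone $N_K(u(t))$), we get $\|\dot u(t)\| = \|s(u(t))\| \le \|v(t)\| \le M$ for a.e.\ $t\in[0,T]$. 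Therefore $u$ is $M$-Lipschitz on $[0,T]$. If $u$ is bounded over all of $[0,+\infty[$, the same $B$, $M$ work uniformly in $T$, so $u$ is $M$-Lipschitz on $[0,+\infty[$.

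\emph{Expected main obstacle.} The only genuinely delicate point is the chain-rule identity $\frac{d}{dt}f_i(u(t)) = \langle v_i(t),\dot u(t)\rangle$ a.e., i.e., that $f_i\circ u$ is differentiable a.e.\ with this value even though $f_i$ is merely convex and not differentiable. I would handle it via the standard argument: absolute continuity of $f_i\circ u$ follows from local Lipschitzness of $f_i$; at a point $t$ where $\dot u(t)$ exists, the subgradient inequality gives, for any $w\in\partial f_i(u(t))$, both $f_i(u(t+h))-f_i(u(t))\ge \langle w,u(t+h)-u(t)\rangle$ and the reverse bound from a subgradient at $u(t+h)$ controlled by the Lipschitz constant and $o(h)$ terms, so dividing by $h$ and letting $h\to 0^{\pm}$ pins the one-sided derivatives to $\langle w,\dot u(t)\rangle$. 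Everything else is a direct application of \eqref{DescentProperty}, \eqref{E:MOGS}, and the boundedness assertion in H1).
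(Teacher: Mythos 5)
Your argument for part $i)$ is essentially the paper's: establish that $t\mapsto f_i(u(t))$ is absolutely continuous with $\frac{d}{dt}f_i(u(t))=\langle v_i(t),\dot u(t)\rangle$ (the paper invokes Br\'ezis's chain rule, Lemma \ref{lemma_Brezis}, where you sketch the standard direct proof), and then feed $p=v_i(t)$ into the inequality \eqref{DescentProperty} of Proposition \ref{P:MultiSteepestDescent}. There is, however, a sign slip that, taken literally, breaks the key step: from \eqref{E:MOGS} and the definition \eqref{mult-steep-direction} one has
$s(u)=\bigl(-N_K(u)-\mathcal{C}(u)\bigr)^{0}=-\bigl(N_K(u)+\mathcal{C}(u)\bigr)^{0}$, hence $\dot u(t)=s(u(t))$, \emph{not} $s(u(t))=-\dot u(t)$. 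With your convention, \eqref{DescentProperty} gives $-\langle s(u(t)),v_i(t)\rangle\geq \|s(u(t))\|^2$, which points the wrong way and would yield $\frac{d}{dt}f_i(u(t))\geq\|\dot u(t)\|^2$. With the correct identification the chain reads $\langle v_i(t),\dot u(t)\rangle=\langle s(u(t)),v_i(t)\rangle\leq -\|s(u(t))\|^2=-\|\dot u(t)\|^2$, exactly as in the paper; the fix is a one-line correction, but as written the inequality does not follow from what you cite.

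For part $ii)$ you take a genuinely different (and slightly more economical) route. The paper starts from the decomposition $\dot u(t)+\eta(t)+\sum_i\theta_i(t)v_i(t)=0$ of Definition \ref{def.sol-basic}, takes the scalar product with $\dot u(t)$, uses $\langle\eta(t),\dot u(t)\rangle=\frac{d}{dt}\delta_K(u(t))=0$ and Cauchy--Schwarz to get $\|\dot u(t)\|\leq\|\sum_i\theta_i(t)v_i(t)\|$. You instead exploit the lazy (minimal-norm) property directly: since $-\dot u(t)=\bigl(N_K(u(t))+\mathcal{C}(u(t))\bigr)^{0}$ and $0\in N_K(u(t))$, one has $\|\dot u(t)\|\leq\|v(t)\|$ for any selection $v(t)\in\mathcal{C}(u(t))$, which is bounded on $[0,T]$ by H1). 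Both arguments are correct and give the same bound; the paper's version has the minor advantage of applying to any strong solution of the differential inclusion (not only the lazy one), while yours avoids the chain rule for $\delta_K$ altogether. The remaining ingredients (boundedness of the range of $u$ on compact intervals, uniformity of the bound when $u$ is globally bounded) are handled correctly.
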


\noindent This Proposition is a direct consequence of Proposition \ref{P:MultiSteepestDescent} and the following generalized chain rule from Br\'ezis:
\begin{lemma}{\rm {\cite[Lemma 4, p.73]{Br}}} \label{lemma_Brezis} Let  $\Phi: \mathcal H \rightarrow \R \cup \{+\infty\}$ be a closed convex proper function. Let
  $u\in L^2(0, T; \mathcal H)$ be such that $\dot{u}\in L^2 (0, T; \mathcal H)$, and
  $u(t)\in {\rm \mbox{dom}}(\partial \Phi)\ \mbox{for a.e.} \ t$. Assume that there exists
  $\xi\in L^2(0, T; \mathcal H)$ such that $\xi(t)\in \partial \Phi(u(t))$ for
  a.e. $t$. Then the function $t\mapsto \Phi(u(t))$ is absolutely
  continuous, and for every $t$ such that $u$ and  $\Phi(u)$ are differentiable at $t$, and $u(t)\in {\rm \mbox{dom}}(\partial \Phi)$, we have
$$
\forall p\in  \partial \Phi(u(t)),\qquad \frac{d}{dt}  \Phi(u(t))=\langle \dot{u}(t), p\rangle.
$$
\end{lemma}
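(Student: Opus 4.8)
The plan is to reduce to the smooth case by Moreau--Yosida regularization of $\Phi$, establish the integral form of the chain rule there, pass to the limit, and only at the very end extract the pointwise identity for an arbitrary subgradient $p$. Write $A=\partial\Phi$, let $\Phi_\lambda$ denote the Moreau envelope of $\Phi$, and recall the standard facts: $\Phi_\lambda$ is convex and of class $C^1$ on $\H$, with $\nabla\Phi_\lambda = A_\lambda = \frac{1}{\lambda}(I-J_\lambda)$ the Yosida approximation of $A$ (here $J_\lambda$ is the resolvent); one has $\Phi_\lambda(x)\uparrow\Phi(x)$ as $\lambda\downarrow 0$ for every $x\in\H$, and for $x\in\dom(A)$, $A_\lambda x \to (Ax)^0$ with $\|A_\lambda x\|\le\|(Ax)^0\|$. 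Since $u$ is absolutely continuous and $\Phi_\lambda$ is $C^1$, the elementary chain rule applies to $\Phi_\lambda\circ u$ and yields, for all $s,t\in[0,T]$,
\[
\Phi_\lambda(u(t)) - \Phi_\lambda(u(s)) = \int_s^t \langle A_\lambda(u(\tau)),\, \dot u(\tau)\rangle\,d\tau .
\]

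First I would pass to the limit $\lambda\downarrow 0$ in this identity. For a.e. $\tau$ one has $u(\tau)\in\dom(A)$, hence $A_\lambda(u(\tau))\to (\partial\Phi(u(\tau)))^0$ pointwise, while the estimate $\|A_\lambda(u(\tau))\|\le\|(\partial\Phi(u(\tau)))^0\|\le\|\xi(\tau)\|$ gives the domination $|\langle A_\lambda(u(\tau)),\dot u(\tau)\rangle|\le\|\xi(\tau)\|\,\|\dot u(\tau)\|$, which lies in $L^1(0,T)$ since $\xi,\dot u\in L^2$. Dominated convergence then drives the right-hand side to $\int_s^t\langle(\partial\Phi(u(\tau)))^0,\dot u(\tau)\rangle\,d\tau$. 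On the left, $\Phi_\lambda(u(t))\uparrow\Phi(u(t))$ by monotone convergence; moreover, choosing $s_0$ with $u(s_0)\in\dom\Phi$, the uniform-in-$\lambda$ bound $\Phi_\lambda(u(t))\le\Phi(u(s_0)) + \int_{s_0}^t\|\xi(\tau)\|\,\|\dot u(\tau)\|\,d\tau$ coming from the identity forces $\Phi(u(t))$ to be finite for every $t$. We thus obtain
\[
\Phi(u(t)) - \Phi(u(s)) = \int_s^t \langle (\partial\Phi(u(\tau)))^0,\, \dot u(\tau)\rangle\,d\tau \qquad\text{for all } s,t\in[0,T].
\]
The right-hand side being the indefinite integral of an $L^1$ function, $t\mapsto\Phi(u(t))$ is absolutely continuous, which is the first assertion.

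It then remains, at a point $t_0$ where $u$ and $h:=\Phi\circ u$ are differentiable and $u(t_0)\in\dom(\partial\Phi)$, to show $h'(t_0)=\langle\dot u(t_0),p\rangle$ for \emph{every} $p\in\partial\Phi(u(t_0))$. Here I would simply use the subdifferential inequality $\Phi(u(t))\ge\Phi(u(t_0))+\langle p,\, u(t)-u(t_0)\rangle$, i.e. $h(t)-h(t_0)\ge\langle p,\, u(t)-u(t_0)\rangle$, valid for all $t$. Dividing by $t-t_0>0$ and letting $t\downarrow t_0$ gives $h'(t_0)\ge\langle p,\dot u(t_0)\rangle$; dividing by $t-t_0<0$, which reverses the inequality, and letting $t\uparrow t_0$ gives $h'(t_0)\le\langle p,\dot u(t_0)\rangle$. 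Hence $h'(t_0)=\langle\dot u(t_0),p\rangle$, and since the left-hand side does not depend on $p$, the identity holds for every subgradient at once.

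The main obstacle is the limit passage $\lambda\downarrow 0$ of the first part: one must marshal the Moreau--Yosida calculus (convexity and $C^1$ regularity of $\Phi_\lambda$, the relation $\nabla\Phi_\lambda=A_\lambda$, the monotone convergence $\Phi_\lambda\uparrow\Phi$, and the minimal-norm estimate $\|A_\lambda x\|\le\|(Ax)^0\|$) and combine monotone convergence on the left with dominated convergence on the right, the domination being furnished precisely by the $L^2$ selection $\xi$. By contrast, the pointwise formula for arbitrary $p$ is the short and robust part, following directly from one-sided difference quotients of the subdifferential inequality once absolute continuity is established.
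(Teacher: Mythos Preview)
The paper does not prove this lemma; it is quoted verbatim from Br\'ezis \cite[Lemma~4, p.~73]{Br} and used as a black box. Your argument is correct and is in fact the classical proof one finds in Br\'ezis: regularize by the Moreau envelope, apply the $C^1$ chain rule to $\Phi_\lambda\circ u$, pass to the limit using the domination $\|A_\lambda(u(\tau))\|\le\|(\partial\Phi(u(\tau)))^0\|\le\|\xi(\tau)\|$ furnished by the $L^2$ selection, and then extract the pointwise identity for an arbitrary $p\in\partial\Phi(u(t_0))$ from one-sided difference quotients of the subgradient inequality. There is nothing to compare against in the present paper.
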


\begin{proof}[Proof of Proposition \ref{P:qualitative}]
$i)$ By definition of (MOG), for almost all \ $t>0$,  $\dot{u}(t) = s(u(t))$ holds.
Hence, using Proposition \ref{P:MultiSteepestDescent}, for any $p \in \mbox{Conv}\left\{\partial f_i(u(t)) \right\}$
\begin{equation}\label{qua1} 
	\Vert \dot u(t) \Vert^2 + \left\langle  \dot{u}(t), p \right\rangle \leq 0.
\end{equation}
Moreover, for almost all \ $t>0$, there exists ${v}_i (t)  \in \partial f_i(u(t))$ with  \  $v_i\in L^2(0,T; \mathcal H)$. Hence taking $p = {v}_i (t) \in \mbox{Conv}\left\{\partial f_i(u(t)) \right\}$ in (\ref{qua1}) yields
\begin{equation}\label{qua2} 
	\Vert \dot u(t) \Vert^2 + \left\langle  \dot{u}(t), v_i(t) \right\rangle \leq 0.
\end{equation}
The  derivation chain rule is valid in our situation, see Lemma \ref{lemma_Brezis}. Hence, $f_i(u)$ is absolutely continuous on each bounded interval $[0,T]$,
which, by (\ref{qua2}), gives      for almost all $t>0$
\begin{equation}\label{qua3} 
	\|\dot{u}(t)\|^2  +  \frac{d}{dt} f_i(u(t))  \leq 0.
\end{equation}
As a consequence, $\frac{d}{dt} f_i(u(t))  \leq 0$, and for each $i=1,...,q$ \ the function  $t \mapsto f_i(u(t))$ is  nonincreasing. 

\noindent $ii)$ By Definition \ref{def.sol-basic} of a  strong global solution, we can write  
$$
\dot u(t) +  \eta (t) + \sum_i \theta_i (t) v_i(t)  = 0
$$
with $\eta(t) \in N_K(u(t)), \  v_i(t) \in \partial f_i(u(t)), \left(\theta_i(t)\right)\in \S^q  \quad \mbox{for almost all} \ t>0$.
Let us argue on some $[0,T]$. Since $\partial f_i$ is bounded on bounded sets, and $\left(\theta_i(t)\right)\in \S^q$, we have
$$
\dot u(t) +  \eta (t)  = g(t)
$$
with $g:= - \sum_i \theta_i  v_i \in   L^{\infty} (0,T; \mathcal H)$. Taking the scalar product with $\dot u(t)$, we obtain 
\begin{equation}\label{quali1}
\| \dot u(t)\|^2 + \langle \eta (t) , \dot u (t) \rangle = \left\langle g(t), \dot u(t)  \right\rangle.
\end{equation}
Note that the normal cone mapping $N_K$ is the subdifferential of $\delta_K$, the indicator function of $K$. Using the derivation chain rule, we have
\begin{equation}\label{quali2} 
 \left\langle  \eta(t),  \dot u(t)  \right\rangle= \frac{d}{dt} \delta_K (u(t))  =0.
\end{equation}
Combining (\ref{quali1}), (\ref{quali2}) and the Cauchy-Schwarz inequality, we obtain 
$$
\| \dot u(t)\|  \leq \| g(t)\|.
$$
Hence, $\dot u \in L^{\infty} (0,T; \mathcal H)$, which is equivalent to the Lipschitz continuity of $u$. If $u$ is bounded, just notice that $g \in L^{\infty} (0,+\infty; \mathcal H)$, and conclude in a similar way.
\end{proof}

 \subsection{Examples}\label{S:Examples}
We now illustrate the (MOG) dynamic through some simple examples in $\mathcal H = \mathbb R \times \mathbb R$. They suggest that its study is nontrivial, because (MOG) is governed by a vector field that  is neither monotone, nor locally Lipschitz: without any further assumption, we cannot expect more than the  H\"older continuity  of this field vector (see \cite{AG1}, and Example 3 below).

\begin{ejem}
{\em 
Take the quadratic functions $f_1(x,y)= \frac{1}{2}(x+1)^2    + \frac{1}{2}y^2 $  and  $f_2(x,y)= \frac{1}{2}(x-1)^2    + \frac{1}{2}y^2 $. The corresponding Pareto set is $\P=\P_w=[-1, +1] \times \left\{0\right\}$ and the steepest descent is given by  :

\begin{equation}
s(x,y)=
\begin{cases}
 -(x-1,y)     \  \ \mbox{if } \  x>1,\\
 -(0,y) \  \quad \quad \mbox{if } \  -1 \leq x \leq 1,\\
 -(x+1,y)     \ \  \mbox{if } \   x<-1 .
\end{cases}
\end{equation}

\noindent Figure 1 shows some trajectories of the (MOG) dynamic. Trajectories are straight lines connecting the starting point and its projection on $\P$. 
On this  example, we can observe that (MOG) is different from the descent dynamics associated with a  scalarized function $\alpha f_1 + f_2$, $\alpha >0$. It is neither related to the descent dynamic associated to the max function $f= \max f_i$. Indeed, when starting from some $(x_0,y_0)$ with $x_0 >1$,  the trajectory of the steepest descent for $f= \max f_i$ is first oriented 
toward  the Pareto equilibrium $(-1,0)$, while the trajectory of (MOG) is oriented toward $(1,0)$.

\vspace{0.5cm}
\setlength{\unitlength}{7cm}
\begin{picture}(0.3,0.7)(-0.6,-0.08)

\put(1.25,0.02){$x$}
\put(0.38,0.56){$y$}
\put(0.37,0.02){$0$}
%\put(0.3,-0.3){$\mbox{figure} \  3$}
\put(0.75,0.02){$+1$}
\put(0.02,0.01){$-1$}

\put(0.7,-0.1){\line(0,1){0.7}}
\put(0.0,-0.1){\line(0,1){0.7}}

\put(0.35,-0.1){\vector(0,1){0.7}}	
\put(-0.3,0){\vector(1,0){1.6}}

\put(0.06,0.00){\line(1,0){0.58}}  
\put(0.0,0.0){\line(1,0){0.7}}

\linethickness{0.3mm}
\put(0.923,0.45){\vector(-1,-2){0.03}}
\put(-0.198,0.4){\vector(1,-2){0.03}}
\put(0.5,0.4){\vector(0,-1){0.05}}

\linethickness{0.4mm}
\put(0.7,0.00){\line(1,2){0.25}}
\put(0.0,0.00){\line(-1,2){0.25}} 
\put(0.5,0.00){\line(0,1){0.5}}

\end{picture}

\begin{center}
Figure 1
\end{center}
\vspace{0.5cm}
}
\end{ejem}

\begin{ejem}
{\em 
Let $f_1(x,y)= \frac{1}{2}x^2$  and $f_2(x,y)= \frac{1}{2}y^2$. Here $\P = \{(0,0)\}$ and $\P_w = \mathbb R \times \left\{0\right\} \cup   \left\{0\right\} \times \mathbb R$. The multiobjective steepest descent vector field, once computed, is:

\begin{equation*}
s(x,y) = -\left(  \frac {xy^2}{x^2 + y^2}, \   \frac {yx^2}{x^2 + y^2}        \right) \text{ if } (x,y)\neq (0,0), \ s(0,0)= (0,0).
\end{equation*} 

\noindent Observe that $(x,y) \mapsto s(x,y)$ is a nonlinear vector field, and it is not a gradient vector field. Moreover, trajectories tend to move away from each other (see Figure 2), which reflects the fact that $(x,y) \mapsto - s(x,y)$ is not a monotone operator. Indeed, for $x>0, \ y>0, \  x\neq y$ 
$$
\left\langle  -s(x,y) +  s(y,x), (x,y) - (y,x)\right\rangle   =  -2\frac{xy(x-y)^2}{x^2 + y^2} <0.
$$
\if{Nevertheless,  in this example, it can be shown  that $(x,y) \mapsto - s(x,y)$ is \textit{hypomonotone} on $(\R^*)^2$, i.e.,
 locally,  there exists some $\alpha \geq 0$ such that $-s + \alpha I$ is monotone. To see this, compute the Jacobian of $-s$ 
 
\begin{equation*}
D_{(x,y)} (-s) = \frac{1}{(x^2 + y^2)^2} \begin{pmatrix}
y^4 - x^2y^2 & 2xy^3 \\ 2x^3y & x^4 -x^2y^2
\end{pmatrix},
\end{equation*}
and observe that $D_{(x,y)} (-s) + I$ is positive whenever $(x,y) \neq (0,0)$.}\fi

\vspace{0.1cm}
\setlength{\unitlength}{7cm}
\begin{picture}(0.5,0.7)(-0.6,0)
\put(-0.04,0.01){$0$}

\put(0.785,-0.04){$x$}
\put(-0.05,0.385){$y$}
%\put(0.65,0.069){\line(1,2){0.012}}	
%\put(0.64,0.107){\line(2,-1){0.026}}
\put(0.52,0.2){$s(x,y)$}
\put(0.0,-0.1){\vector(0,1){0.8}}	
\put(-0.1,0){\vector(1,0){1.01}}

\put(0.65,0.6){$x=y$}	
\put(0.8,0.41){$M  (x,y)$}
\put(0.8,0.4){\line(0,-1){0.4}}
\put(0.8,0.4){\line(-1,0){0.8}}	
\put(0,0.4){\line(2,-1){0.8}}	
%\put(0,0){\vector(1,2){0.16}}
\put(0.8,0.4){\vector(-1,-2){0.16}}

\linethickness{0.4mm}
\put(0,0){\line(1,1){0.65}}
%%%%%% I replaced Hedy's curves buy the one obtained numerically %%%%%
%\qbezier(0.7,0)(0.73,0.2)(0.8,0.4)
\qbezier(0.69,0)(0.71,0.155)(0.8,0.4)

%\qbezier(0.20,0)(0.22,0.2)(0.45,0.43)
\qbezier(0.45,0.43)(0.2,0.15)(0.19,0.135)
\qbezier(0.19,0.135)(0.18,0.12)(0.17,0.1065)
\qbezier(0.17,0.1065)(0.16,0.09)(0.15,0.07)
\qbezier(0.15,0.07)(0.14,0.045)(0.135,0.025)
\qbezier(0.135,0.025)(0.133,0.0115)(0.1326,0)

%\qbezier(0.0,0.45)(0.2,0.5)(0.4,0.6)
\qbezier(0.0,0.415)(0.17,0.45)(0.5,0.65)
\qbezier(0.0,0.345)(0.17,0.385)(0.55,0.65)
\qbezier(0.0,0.25)(0.17,0.3)(0.6,0.65)
\end{picture}
\begin{center}
Figure 2
\end{center}
\vspace{0.5cm}
}
\end{ejem}

\begin{ejem}
{\em

Let $f_1 (x,y) =\frac{1}{2}(x^2 + y^2)$ and   $f_2 (x,y)= x$. The corresponding Pareto set is $\P=\P_w=]-\infty, 0] \times \{0\}$. Once computed, we see that the steepest descent vector field is defined according to three areas of the plane (see Figure 3):  
\begin{equation}
s(x,y)=
\begin{cases}
 -(1,0)   \quad   \quad   \quad   \quad   \quad \quad  \ \  \quad \mbox{if } \  x\geq 1,\\
 -(x,y) \    \quad   \quad   \quad   \quad    \quad   \  \quad  \quad \mbox{if } \  (x-\frac{1}{2})^2 + y^2 \leq \frac{1}{4},\\
 \frac{-1}{(x-1)^2+y^2} (y^2,y(1-x))      \ \  \  \mbox{else} .
\end{cases}
\end{equation}

\noindent As in the previous example, this vector field is neither linear nor a gradient vector field, or monotonous. Moreover, it is not locally Lipschitz. The lack of Lipschitz continuity occurs at the point $(1,0)$, where the vector field "splits" into three parts. Figure 4 provides a simple example of parameterized vectors $u_\theta,v_\theta$  that converge both to $(1,0)$ when $\theta$ goes to zero, but such that $\Vert s(u_\theta) - s(v_\theta)\Vert= \sin(\theta)$ and $\Vert u_\theta - v_\theta \Vert = \sin(\theta)\tan(\theta)$. As a consequence, $\frac{\Vert s(u_\theta) - s(v_\theta)\Vert}{\Vert u_\theta - v_\theta \Vert} \simeq \frac{\theta}{\theta^2}$ is unbounded when $\theta \to 0$.

\vspace{3.5cm}
\setlength{\unitlength}{3cm}

\begin{picture}(0.5,0.5)(-2,0)

%Origine
\put(-0.08,-0.1){$0$} 

%Abscisses et Ordonnees
\put(0.0,-0.5){\vector(0,1){2}} 	
\put(-1,0.0){\vector(1,0){2.5}} 

% Droite verticale
\put(1,-0.5){\line(0,2){2}} 
\put(1.05,-0.5){$x=1$}

%Cercle
\put(0.5,0){\circle{1}}
\put(1.01,0.05){$(1,0)$}
\put(1,0){\circle*{0.04}}

%trajectoires
\linethickness{0.4mm}

\qbezier(1.5,1.4)(1.25,1.4)(1,1.4)
\qbezier(1,1.4)(0.9,1.398)(0.8,1.39)
\qbezier(0.8,1.39)(0.7,1.37)(0.61,1.35)
\qbezier(0.61,1.35)(0.51,1.32)(0.43,1.29)
\qbezier(0.43,1.29)(0.34,1.25)(0.23,1.18)
\qbezier(0.23,1.18)(0.1,1.09)(0,1)
\qbezier(0,1)(-0.1,0.89)(-0.17,0.8)
\qbezier(-0.17,0.8)(-0.23,0.7)(-0.28,0.6)
\qbezier(-0.28,0.6)(-0.33,0.51)(-0.36,0.4)
\qbezier(-0.36,0.4)(-0.39,0.3)(-0.41,0.2)
\qbezier(-0.41,0.2)(-0.42,0.1)(-0.425,0)

\qbezier(1.5,1)(1.25,1)(1,1)
\qbezier(1,1)(0.9,0.99)(0.8,0.985)
\qbezier(0.8,0.985)(0.7,0.963)(0.61,0.93)
\qbezier(0.61,0.93)(0.53,0.897)(0.42,0.83)
\qbezier(0.42,0.83)(0.32,0.76)(0.22,0.66)
\qbezier(0.22,0.66)(0.148,0.56)(0.065,0.415)
\qbezier(0.065,0.415)(0.04,0.36)(0,0.22)
\qbezier(0,0.22)(-0.01,0.175)(-0.025,0)

\qbezier(0,1.4)(-0.09,1.328)(-0.2,1.22)
\qbezier(-0.2,1.22)(-0.3,1.13)(-0.345,1.07)
\qbezier(-0.345,1.07)(-0.4,1)(-0.5,0.847)
\qbezier(-0.5,0.847)(-0.56,0.74)(-0.6,0.65)
\qbezier(-0.6,0.65)(-0.639,0.55)(-0.669,0.45)
\qbezier(-0.669,0.45)(-0.69,0.35)(-0.71,0.25)
\qbezier(-0.71,0.25)(-0.72,0.15)(-0.73,0)

\qbezier(1.5,0.6)(1.25,0.6)(1,0.6)
\qbezier(1,0.6)(0.9,0.592)(0.8,0.57)
\qbezier(0.8,0.57)(0.7,0.525)(0.64,0.48)
\qbezier(0.64,0.48)(0.32,0.24)(0,0)

\qbezier(1.5,0.4)(1.25,0.4)(1,0.4)
\qbezier(1,0.4)(0.95,0.397)(0.9,0.389)
\qbezier(0.9,0.389)(0.85,0.375)(0.84,0.368)
\qbezier(0.84,0.368)(0.42,0.184)(0,0)

\qbezier(1.5,-0.2)(1.25,-0.2)(1,-0.2)
\qbezier(1,-0.2)(0.98,-0.198)(0.96,-0.196)
\qbezier(0.96,-0.196)(0.48,-0.098)(0,0)

\end{picture}
\vspace{1.5cm}

\begin{center}
Figure 3
\end{center}

\vspace{0.5cm}
\setlength{\unitlength}{6cm}

\begin{picture}(0.6,0.6)(-0.75,0)
\put(-0.15,0){$(0,0)$}
\put(0,0){\circle*{0.02}}

\put(0.138,0){\line(-1,3){0.02}}

\put(0.145,0.010){$\theta$}
\put(1.01,0){$(1,0)$}
\put(1,0){\circle*{0.02}}

%\put(1.01,0.25){$D$}
%\put(0.82,0.20){$C$}
\put(0.933,0.14){$\theta$}

%\put(1,0.125){\line(-3,-1){0.06}}

\put(1,0.133){\line(-3,-1){0.06}}

\put(0.95,0){\line(0,1){0.05}}
\put(1,0.05){\line(-1,0){0.05}}
\put(1.005,0.51){$v_\theta$}
\put(0.76,0.43){$u_\theta$}
\put(1,0.5){\circle*{0.02}}
\put(0.8,0.4){\circle*{0.02}}

\put(0.737,0.37){\line(1,-2){0.03}}

\put(0.764,0.31){\line(2,1){0.065}} 
\put(0,0){\line(2,1){1}}
\put(0,0){\line(1,0){1}}
\put(0.5,0){\oval[0.5](1,1)[t]}
%\put(0.5,0.01){$1$}

\put(0.95,0){\line(0,1){0.05}}
\put(1,0.05){\line(-1,0){0.05}}
\put(1,0){\line(-1,2){0.2}}	
\put(1,0){\line(0,1){0.6}}
\end{picture}

\begin{center}
Figure 4
\end{center}

\vspace{0.5cm}

}
\end{ejem}

\subsection{Related dynamics}\label{S:RelatedDynamics}

When there is just one objective function $f$, since  $\partial f(u(t))$ is a closed convex set, the (MOG) system specializes to 
\begin{equation*}
 \dot u(t) + \bigg(N_K(u(t)) + \partial f(u(t)) \bigg)^{0}= 0.
\end{equation*}
Indeed, this system is equivalent to
\begin{equation*}
 \dot u(t) + N_K(u(t)) + \partial f(u(t))\ni 0,
\end{equation*}
because, in this case, the lazy solution property is automatically satisfied by the trajectories of the semigroup of contractions generated by the maximal monotone operator
$N_K + \partial f$, see \cite[Theorem 3.1]{Br}. In particular, our existence and asymptotic analysis for (MOG) in Sections 2 and 3 extends the well-known  results for the nonsmooth gradient flow (see \cite{Br}). 

This leads us to ask a natural question, which is the study of the relationship (or differences) between (MOG) and the Multiobjective Differential Inclusion ((MDI) for short)
\begin{equation}\label{mult-steep-desc22}
{\rm(MDI)} \quad \dot u(t) + N_K(u(t)) + \mbox{Conv}\left\{\partial f_i(u(t))\right\} \ni 0.\\
\end{equation}
It appears that (MDI) enjoys a weaker form of Proposition \ref{P:qualitative} $i)$ :
\begin{proposition} \label{diffinclusion}  Let  $t\in [0, +\infty [ \mapsto u(t) \in \mathcal H$ be a strong global solution of {\rm(MDI)} in the sense of Definition \ref{def.sol-basic} (except the lazy property). Then for almost all $t\geq 0$, such that         
 $u(t)\notin {\mathcal P}_c$, there exists some $i\in \left\{1,\cdots,q\right\}$ (which depends on $t$) such that
 $$
 \frac{d}{dt} f_i(u(t))  < 0.
 $$
\end{proposition}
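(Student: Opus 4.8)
The plan is to follow the same chain-rule scheme used in the proof of Proposition \ref{P:qualitative}$i)$, but replacing the crucial descent inequality $\langle\dot u(t),p\rangle\le -\|\dot u(t)\|^2$ (which is no longer available, since the lazy property has been dropped) by a weaker statement that still holds for any solution of (MDI). Namely, writing $\dot u(t)+\eta(t)+\sum_i\theta_i(t)v_i(t)=0$ with $\eta(t)\in N_K(u(t))$, $v_i(t)\in\partial f_i(u(t))$, $(\theta_i(t))\in\S^q$, I will first establish that for almost every $t$,
\begin{equation*}
\sum_{i=1}^q \theta_i(t)\,\frac{d}{dt}f_i(u(t)) \le -\|\dot u(t)\|^2 .
\end{equation*}
This is obtained exactly as in Proposition \ref{P:qualitative}: Lemma \ref{lemma_Brezis} gives $\frac{d}{dt}f_i(u(t))=\langle\dot u(t),v_i(t)\rangle$ and $\frac{d}{dt}\delta_K(u(t))=\langle\dot u(t),\eta(t)\rangle=0$; then taking the scalar product of the inclusion with $\dot u(t)$ and using $\sum_i\theta_i(t)=1$ yields the displayed inequality.

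\textbf{From the averaged inequality to a single index.} Fix a $t$ at which $u(t)\notin\mathcal P_c$, all the functions $f_i(u(\cdot))$ and $u(\cdot)$ are differentiable, and the inclusion holds (a.e. $t$ qualifies). Since $u(t)\notin\mathcal P_c$, by the remark following Definition \ref{mstdd} and by Lemma \ref{equiv}, $0\notin N_K(u(t))+\mbox{Conv}\{\partial f_i(u(t))\}$; in particular the decomposition of $-\dot u(t)=\eta(t)+\sum_i\theta_i(t)v_i(t)$ cannot have $\dot u(t)=0$ together with the right-hand side lying in $N_K+\mathcal C$ — more directly, I claim $\|\dot u(t)\|>0$ at such $t$. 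Indeed if $\dot u(t)=0$ then $\eta(t)+\sum_i\theta_i(t)v_i(t)=0$ exhibits $0\in N_K(u(t))+\mbox{Conv}\{\partial f_i(u(t))\}$, i.e. $u(t)\in\mathcal P_c$, a contradiction. Hence the right-hand side of the averaged inequality is strictly negative, so $\sum_i\theta_i(t)\frac{d}{dt}f_i(u(t))<0$. Since $(\theta_i(t))$ is a vector of nonnegative weights summing to $1$, at least one term must be strictly negative: there exists $i$ (depending on $t$, via the $\theta_i(t)$ that is both positive and multiplies a negative derivative) with $\frac{d}{dt}f_i(u(t))<0$.

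\textbf{Where the care is needed.} The only real subtlety — and the place I would be most careful — is the measure-theoretic bookkeeping: I must intersect the full-measure sets on which (a) the inclusion \eqref{def2} holds, (b) each $t\mapsto f_i(u(t))$ is differentiable with derivative $\langle\dot u(t),v_i(t)\rangle$ (Lemma \ref{lemma_Brezis} applied with $\Phi=f_i$ and with $\Phi=\delta_K$), and (c) $u$ is differentiable; the finite intersection is still of full measure, and on it the argument above runs verbatim. One should also note that $v_i\in L^\infty_{loc}$ and $\eta\in L^2_{loc}$ guarantee the hypotheses of Lemma \ref{lemma_Brezis} on every $[0,T]$, so the chain rule is legitimately applicable. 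No compactness or dimension hypothesis is needed here, so the statement holds in the general Hilbert setting. This gives the claimed strict decrease of at least one objective at almost every non-critical time, which is precisely the weakened form of the descent property announced for (MDI).
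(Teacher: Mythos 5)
Your proof is correct and follows essentially the same route as the paper's: take the scalar product of the (MDI) decomposition with $\dot u(t)$, apply the Br\'ezis chain rule (Lemma \ref{lemma_Brezis}) to each $f_i$ and to $\delta_K$ to get $\|\dot u(t)\|^2+\sum_i\theta_i(t)\frac{d}{dt}f_i(u(t))\le 0$, observe that $u(t)\notin\mathcal P_c$ forces $\dot u(t)\neq 0$, and conclude from $(\theta_i(t))\in\S^q$ that some term is strictly negative. Your extra remarks (the explicit contradiction showing $\dot u(t)\neq 0$, and the intersection of full-measure sets) only make explicit what the paper leaves implicit; the appeal to Lemma \ref{equiv} is unnecessary but harmless.
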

\begin{proof}
Since $u$ is a strong solution of (MDI), there exists $\eta: [0,+\infty [ \rightarrow \mathcal H$,  $v_i: [0,+\infty [ \rightarrow \mathcal H$,  $\theta_i: [0,+\infty [ \rightarrow [0,1]$ $i=1,2,...,q$  
 which satisfy
 \begin{align*}
 &  \theta_i \in L^{\infty}(0,+\infty; \R), \ \  \left(\theta_i(t)\right)\in \S^q \text{ for almost all } t>0;\\
 & v_i\in L^{\infty}(0,T; \mathcal H), \quad  \eta \in L^2(0,T; \mathcal H) \  \mbox{for all} \ T>0   \  \mbox{ and all} \  i=1,2,..., q; \\
 & \eta(t) \in N_K(u(t)), \  v_i(t) \in \partial f_i(u(t)) \quad \mbox{for almost all} \ t>0; \\
 &  \dot u(t) +  \eta (t) + \sum_i \theta_i (t) v_i(t)   = 0 \quad \quad \  \mbox{for almost all} \ t>0;
 \end{align*}
Taking the scalar product of the above equation with $\dot u(t)$, we obtain
\begin{equation}\label{diff} 
\| \dot u(t)	\|^2 + \left\langle \eta(t)  , \dot u(t)  \right\rangle + \sum_i \theta_i (t)\left\langle  v_i(t),  \dot u(t)  \right\rangle  = 0.
\end{equation}
The  derivation chain rule is valid in our situation, see Lemma \ref{lemma_Brezis}. Hence, $f_i(u)$ is absolutely continuous on each bounded interval $[0,T]$,
which gives,      for almost all $t>0$
\begin{equation}\label{diff10} 
 \frac{d}{dt} f_i(u(t))  = \left\langle  v_i(t),  \dot u(t)  \right\rangle.
\end{equation}
By a similar argument  using the indicator function $\delta_K$  of $K$ (recall (\ref{quali2})) we have
\begin{equation}\label{diff11} 
 0= \frac{d}{dt} \delta_K (u(t))  = \left\langle  \eta(t),  \dot u(t)  \right\rangle.
\end{equation}
Combining (\ref{diff})  with (\ref{diff10}), (\ref{diff11}), we obtain 
\begin{equation}\label{diff12} 
\| \dot u(t)	\|^2  + \sum_i \theta_i (t) \frac{d}{dt} f_i(u(t))  \leq 0.
\end{equation}
Since  $u(t)\notin {\mathcal P}_c$, we have $\dot u(t) \neq 0$. Hence
\begin{equation}\label{diff13} 
\sum_i \theta_i (t)\frac{d}{dt} f_i(u(t))  < 0.
\end{equation}
Since $\left(\theta_i(t)\right)\in \S^q$, this clearly implies that at least one of the derivatives $\frac{d}{dt} f_i(u(t))$ is negative.
\end{proof}

\begin{remark}\label{max function} {\rm 
Proposition \ref{diffinclusion} tells us that, for any trajectory of (MDI), for almost all $t>0$,  at least one of the objective functions  decreases. 
We will illustrate this on a few examples, and highlight the fact that, by contrast, for (MOG) trajectories, they are all decreasing.

a) Consider the steepest descent dynamic associated to one of the objective functions, say $f_i$, $i$ being fixed.
Clearly, $\partial f_i(u) \subset \mbox{Conv}\left\{\partial f_j(u) \right\}$, and the corresponding trajectories are solutions of (MDI). The strategy consisting in taking care  of 
only one objective function $f_i$, clearly leads to  Pareto equilibria, but fails in general to improve all  the objective functions.

b) The scalarization approach consists in taking a constant convex combination of the objective functions $f_\theta = \sum\limits_{i=1}^{q} \theta_i f_i$, with $\theta \in \S^q$. The sum rule for continuous convex functions gives, for any $u \in \mathcal H$ 
\begin{equation}
\partial f_\theta (u) = \sum\limits_{i=1}^{q} \theta_i \partial f_i (u),
\end{equation}
and clearly $\partial f_\theta (u) \subset \mbox{Conv}\left\{\partial f_i(u)\right\}$. 
By Bruck's theorem \cite{Bruck}, any orbit of the generalized gradient flow generated by $\partial f_\theta$ converges to a minimizer of $f_\theta$,
 which, by Lemma \ref{equiv},  is a weak Pareto optimal point. But, in general, this approach fails  to improve all  the objective functions.
% This approach is know to lead to Pareto optimum, but fails also to improve all  the objective functions in general.
 Take for instance in Example 1 any $\theta=(\lambda,(1-\lambda))$ for $\lambda \in ]0,1]$. When starting from $(1,0)$, the trajectory goes straight to $(1-2\lambda,0)$ by decreasing $f_1$ but increasing $f_2$.

c) Consider the steepest descent dynamic associated to the  function $f= \max_i f_i$. 
This dynamic has some similarities with (MOG), but it is different.
As a  supremum of a finite number of convex continuous  functions, $f$ is still  convex continuous. The classical subdifferential rule for the supremum of convex functions, 
see for example \cite[Theorem 18.5]{BC}, gives (in our setting)
\begin{equation}\label{diff14} 
\partial f(u) = \mbox{Conv}\left\{\partial f_i(u): \   i \in I(u)\right\}
\end{equation}
where $I(u)= \left\{  i \in I:  \ f_i(u)= f(u) \right\}$ is the set of the active indices at $u$.
Clearly $ \partial f(u) \subset  \mbox{Conv}\left\{\partial f_i(u)\right\}$. As a consequence, the  trajectories of the steepest descent for $f= \max f_i$ are also solutions of (MDI).
But, in general, they fail to satisfy that all the objective functions are decreasing.
Take for instance Example 1: 
when starting from some $(x_0,y_0)$ with $x_0 >y_0 > 1$, along the trajectory $f_2$ is first decreasing, until the current point reaches the projection of $(1,0)$ on the line segment joining $(x_0,y_0)$ to $(-1, 0)$, then it is increasing.
%Clearly, in this example, the two dynamical systems are different.

%In this case, the trajectories of the semigroup of contractions generated by the maximal monotone operator
%$\partial f + N_K$ are lazy, see  \cite[Theorem 3.1]{Br}.

d) As shown by the above examples, (MDI) provides diversity, an interesting feature for evolutionary processes, and generating the whole Pareto set, see \cite{BrS}.

}
\end{remark}

%%%%%%%%%%%%%%%%%%%%%%%%%%%%%%%%%%%
%%%%%%%%%%%%%%%%%%%%%%%%%%%%%%%%%%
%%%%%%%%%%%%%%%%%%%%%%%%%%%%%%%%%%

%\section{Strong solutions, definition} \label{def}
\section{Asymptotic convergence to a weak Pareto minimum}\label{asp}

In this section, we study the asymptotic behavior (as $t\rightarrow +\infty$) of the strong global solutions of (MOG). We take for granted their existence, this question being examined into detail in  section \ref{ex-yo}. 
In order to prove the weak convergence of the trajectories of (MOG), we use the classical Opial's lemma \cite{Op}. We recall its statement in its continuous form, and  give a short proof of it:

\if{
Firstly, let us make precise the notion of strong solution. Indeed, we take for granted the existence of  (MOG), this question being examined into detail in  section \ref{ex-yo}.

\subsection{Strong solution}
Let us recall some classical facts concerning vector-valued
functions of real variables (see \cite [Appendix]{Br}  for more details).
 \begin{definition} 
 \label{abs.cont.def} Given $T\in\mathbb{R}^{+}$, a function
$u:\left[0,T\right]\rightarrow \mathcal H$ is said to be absolutely continuous
if one of the following equivalent properties holds:

 $i)$ there exists an integrable function $g:\left[0,T\right]\rightarrow \mathcal H$
such that $u\left(t\right)=u\left(0\right)+\int_{0}^{t}g\left(s\right)ds \ \ \forall t\in\left[0,T\right];$

$ii)$ $u$ is continuous and its distributional derivative
belongs to the Lebesgue space $L^{1}\left(\left[0,T\right];\mathcal H\right)$; 

 $iii)$ for every $\epsilon>0$, there exists  $\eta>0$
such that for any finite family of intervals $I_{k}=\left(a_{k},b_{k}\right)$,  \
$I_{k}\cap I_{j}=\emptyset$ for $k\neq j$ and $\sum_{k} |b_{k}-a_{k}|\leq\eta\Longrightarrow\sum_{k} \|u\left(b_{k}\right)-u\left(a_{k}\right)\| \leq\epsilon.$
\end{definition} 
  Moreover, an absolutely continuous function is
differentiable almost everywhere, its derivative coincide with its
distributional derivative almost everywhere, and one can recover the
function from its derivative $u^{\prime}=g$  using the integration formula 
$\left(i\right).$
We can now make precise the notion of solution for (MOG).
 \begin{definition} \label{def.sol-basic} We say that  $u(\cdot)$ is
a strong global solution of 
\begin{center}
$\mbox{{\rm (MOG)}} \quad   \dot u(t) + \bigg(N_K(u(t)) + \mbox{{\rm Conv}}\left\{\partial f_i(u(t)) \right\} \bigg)^{0}= 0$
\end{center}
if the following properties are satisfied:

 $\left(i\right)$ $u: [0,+\infty [ \rightarrow \mathcal H$
is  absolutely continuous on each  interval
$\left[0,T\right]$, $0<T<+\infty;$

 $\left(ii\right)$ there exists $\eta: [0,+\infty [ \rightarrow \mathcal H$,  $v_i: [0,+\infty [ \rightarrow \mathcal H$,  $\theta_i: [0,+\infty [ \rightarrow [0,1]$ i=1,2,...,q  
    \ which satisfy
 \begin{align}
 &  \theta_i \in L^{\infty}(0,+\infty; \R), \ \  \left(\theta_i(t)\right)\in \S^q \text{ for almost all } t>0;\\
 & v_i\in L^{\infty}(0,T; \mathcal H), \quad  \eta \in L^2(0,T; \mathcal H) \  \mbox{for all} \ T>0   \  \mbox{ and all} \  i=1,2,..., q; \\
 & \eta(t) \in N_K(u(t)), \  v_i(t) \in \partial f_i(u(t)) \quad \mbox{for almost all} \ t>0; \label{def1}\\
 &  \dot u(t) +  \eta (t) + \sum_i \theta_i (t) v_i(t)   = 0 \quad \quad \  \mbox{for almost all} \ t>0; \label{def2}\\
 &  \dot u(t) + ( N_K(u(t)) +    \mbox{{\rm Conv}}\left\{ \partial f_i(u(t)) \right\} )^{0}     = 0 \quad \mbox{for almost all} \ t>0.
 \end{align}
\end{definition} 

\subsection{Convergence properties}
}\fi

\begin{lemma}\label{Opial} Let   $S$ be
a non empty subset of $\mathcal H$, and  $u: [0, +\infty [ \to \mathcal H$ a map. Assume that
\begin{eqnarray*}
(i) &&\mbox{for every }z\in S,\>  \lim_{t \to  +\infty} 	\| u(t)- z 	\|  \mbox{ exists};\\
(ii) &&\mbox{every weak sequential cluster point of the map } u \mbox{ belongs to }S. 
\end{eqnarray*}
Then
 $$
w-\lim_{t \to  +\infty} u(t) = u_{\infty}  \   \   \mbox{ exists,  for some element
}u_{\infty}\in S. 
$$
\end{lemma}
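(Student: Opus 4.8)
The plan is to follow the classical Opial argument. First I would fix an arbitrary $z \in S$; by hypothesis $(i)$ the function $t \mapsto \|u(t) - z\|$ converges, so in particular $u$ is bounded (take any single $z$ and note $\|u(t)\|$ stays within a bounded range for $t$ large, and on any compact interval $u$ is bounded anyway, though here we don't even need continuity). Bounded subsets of a Hilbert space being weakly relatively compact, there is a sequence $t_n \to +\infty$ and a weak cluster point; by $(ii)$, every such weak sequential cluster point lies in $S$. So it remains to show uniqueness of the weak cluster point.

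The key step is the standard identity: for any $z_1, z_2 \in \mathcal H$,
\begin{equation*}
\|u(t) - z_1\|^2 - \|u(t) - z_2\|^2 = 2\langle u(t), z_2 - z_1 \rangle + \|z_1\|^2 - \|z_2\|^2 .
\end{equation*}
Suppose $u_1$ and $u_2$ are two weak sequential cluster points of $u$, attained along $t_n \to +\infty$ and $\tau_n \to +\infty$ respectively; by $(ii)$ both belong to $S$, so by $(i)$ both $\lim_t \|u(t) - u_1\|$ and $\lim_t \|u(t) - u_2\|$ exist, hence $\lim_t \big(\|u(t) - u_1\|^2 - \|u(t) - u_2\|^2\big)$ exists; call it $\ell$. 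From the identity, $\lim_t \langle u(t), u_2 - u_1\rangle$ exists and equals $\tfrac12(\ell - \|u_1\|^2 + \|u_2\|^2)$. Now evaluate this limit along $t_n$: since $u(t_n) \wto u_1$, it equals $\langle u_1, u_2 - u_1\rangle$. Evaluating along $\tau_n$: since $u(\tau_n) \wto u_2$, it equals $\langle u_2, u_2 - u_1\rangle$. Equating the two gives $\langle u_1 - u_2, u_2 - u_1\rangle = 0$, i.e. $\|u_1 - u_2\|^2 = 0$, so $u_1 = u_2$.

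Thus $u$ has a unique weak sequential cluster point $u_\infty$, and $u_\infty \in S$ by $(ii)$. Finally I would upgrade "unique weak sequential cluster point of a bounded net" to "weak limit": if $u(t)$ did not converge weakly to $u_\infty$, there would be a weak neighborhood $V$ of $u_\infty$ and a sequence $t_n \to +\infty$ with $u(t_n) \notin V$; this bounded sequence has a further weakly convergent subsequence whose limit is a weak sequential cluster point of $u$ different from $u_\infty$ (since it lies outside $V$ — using that in the bounded set we may metrize the weak topology, or simply argue with the defining inequalities of $V$), contradicting uniqueness. Hence $w\text{-}\lim_{t\to+\infty} u(t) = u_\infty \in S$.

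The only mildly delicate point — hardly an obstacle — is the last step, passing from uniqueness of weak sequential cluster points to genuine weak convergence; this is routine because $u$ is bounded and the weak topology on a bounded subset of a Hilbert space is metrizable (e.g. by $d(x,y) = \sum_k 2^{-k}|\langle x - y, e_k\rangle|$ for a dense sequence $(e_k)$ in the unit ball), so sequential arguments suffice. Everything else is the elementary Hilbert-space computation above.
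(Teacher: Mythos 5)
Your proof is correct and follows essentially the same route as the paper's: boundedness from $(i)$, uniqueness of the weak sequential cluster point via the expansion of $\|u(t)-z^1\|^2-\|u(t)-z^2\|^2$ evaluated along the two subsequences, and the standard passage from a unique weak sequential cluster point of a bounded trajectory to weak convergence. The only difference is that you spell out this last metrizability step, which the paper leaves implicit.
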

\begin{proof} By $(i)$ and $S \neq \emptyset$, the trajectory $u$ is bounded in $\mathcal H$. In order to obtain its weak convergence, 
 we just need to prove that the trajectory has a unique weak sequential cluster point. Let
$u(t_n^1) \rightharpoonup z^1$ and $u(t_n^2) \rightharpoonup z^2$, with $t_n^1 \rightarrow +\infty$, and $t_n^2 \rightarrow +\infty$.
By $(ii)$, $z^1 \in S$, and $z^2 \in S$.  By $(i)$, it follows that $\lim_{t \to  +\infty} 	\| u(t)- z^1 	\|$ and  $\lim_{t \to  +\infty} 	\| u(t)- z^2 	\|$  exist.
Hence, $\lim_{t \to  +\infty} (	\| u(t)- z^1 	\|^2-	\| u(t)- z^2 	\|^2)$ exists. Developing and simplifying this last expression, we deduce that
\[
\lim_{t \to  +\infty} \left\langle  u(t), z^2 -z^1 \right\rangle  \  \ \mbox{exists}.
\]
Hence
\[
\lim_{n \to  +\infty} \left\langle  u(t_n^1), z^2 -z^1 \right\rangle =   \lim_{n \to  +\infty} \left\langle  u(t_n^2), z^2 -z^1 \right\rangle,
\]
which gives $\|z^2 -z^1\|^2=0$, and hence $z^2 = z^1$.
\end{proof}
 
\noindent We can now state our main convergence result.
\begin{theorem} 
\label{asymp1} 
Let us make assumptions {\rm H0), H1), H2)}. Then for any  strong global solution $t\in [0, +\infty [ \mapsto u(t) \in \mathcal H$ of {\rm(MOG)}, the following properties hold: 

\if{\noindent 	$i)$ \textit{Descent property}: \ for each $i=1,...,q$,  \   $t \mapsto f_i(u(t))$ is  a nonincreasing absolutely continuous function,  and for almost all $t>0$
\begin{equation}\label{asymp2} 
	\|\dot{u}(t)\|^2  + \frac{d}{dt}  f_i(u(t)) \leq 0.
\end{equation}
}\fi

\noindent 	 $i)$  \textit{Finite energy property}:
\begin{equation}\label{asymp3} 
		\int_0^{+\infty} \|\dot{u}(t)\|^2 dt < +\infty.
\end{equation}
\noindent 	 $ii)$ 	\textit{Weak convergence}:   Assume  that the trajectory 	$t\in [0, +\infty [ \mapsto u(t) \in \mathcal H$ is bounded in $\mathcal H$. Then
$u(t)$ converges weakly in $\mathcal H$	as $t\rightarrow +\infty$ to a weak Pareto optimum.
\end{theorem}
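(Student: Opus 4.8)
The plan is to establish the finite energy property first and then feed it into Opial's lemma (Lemma \ref{Opial}) together with a Lyapunov analysis. For part $i)$, I would start from the descent inequality (\ref{E:DecreaseProperty}) in Proposition \ref{P:qualitative}: for each fixed $i$ and almost every $t>0$,
\[
\|\dot u(t)\|^2 \leq - \frac{d}{dt} f_i(u(t)).
\]
Integrating over $[0,T]$ and using that $t\mapsto f_i(u(t))$ is absolutely continuous gives $\int_0^T \|\dot u(t)\|^2\, dt \leq f_i(u(0)) - f_i(u(T))$. Since $f_i$ is bounded from below on $\H$ by H2), the right-hand side is bounded above by $f_i(u(0)) - \inf_\H f_i < +\infty$ uniformly in $T$, so letting $T\to+\infty$ yields (\ref{asymp3}).

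For part $ii)$, I would apply Opial's lemma with $S = \mathcal P_w = \mathcal P_c$ (the equality coming from Lemma \ref{equiv}). Two things must be checked. First, condition $(i)$ of Opial: for every $z\in\mathcal P_c$, $\lim_{t\to+\infty}\|u(t)-z\|$ exists. The natural Lyapunov candidate is $h_z(t) := \frac12\|u(t)-z\|^2$; one computes $\dot h_z(t) = \langle \dot u(t), u(t)-z\rangle$ a.e. Writing $\dot u(t) = -\eta(t) - \sum_i\theta_i(t)v_i(t)$ with $\eta(t)\in N_K(u(t))$, $v_i(t)\in\partial f_i(u(t))$, $(\theta_i(t))\in\S^q$, and choosing $\bar\eta\in N_K(z)$, $\bar v_i\in\partial f_i(z)$ with $\bar\eta + \sum_i\bar\theta_i\bar v_i = 0$ for the multipliers $\bar\theta$ witnessing $z\in\mathcal P_c$, I would use monotonicity of $N_K$ and of each $\partial f_i$ to show $\langle \eta(t), u(t)-z\rangle \geq 0$ and $\langle v_i(t), u(t)-z\rangle \geq \langle \bar v_i, u(t)-z\rangle$. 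Combined with $\sum_i\theta_i(t) \langle \bar v_i, u(t)-z\rangle$ — here one must be careful since the $\theta_i(t)$ are not the $\bar\theta_i$ — a cleaner route is: $\langle \dot u(t), u(t)-z\rangle \leq -\sum_i\theta_i(t)\langle v_i(t), u(t)-z\rangle \leq -\sum_i \theta_i(t)\bigl(f_i(u(t)) - f_i(z)\bigr) \leq 0$, where the last step uses the subgradient inequality and the descent property $f_i(u(t))\leq f_i(u(0))$... but one actually needs $f_i(u(t)) \geq f_i(z)$, which need not hold. So instead I would bound $\dot h_z(t) \leq -\langle \eta(t),u(t)-z\rangle - \sum_i\theta_i(t)\langle v_i(t),u(t)-z\rangle$ and estimate the second sum from below using $\langle v_i(t), u(t)-z\rangle \geq f_i(u(t)) - f_i(z)$; since $f_i(u(\cdot))$ converges (it is nonincreasing and bounded below), the quantity $g(t) := \sum_i \theta_i(t)(f_i(u(t))-f_i(z))$ satisfies $\liminf$-type control, and a more robust argument is to show directly $\dot h_z(t)\le 0$ for $t$ large, or to use that $\int_0^{+\infty}\|\dot u\|^2 < \infty$ plus boundedness to get $\|\dot u\|\in L^2$ and deduce convergence of $h_z$ via an integrable-derivative argument. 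The bounded trajectory hypothesis guarantees weak cluster points exist.

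Second, condition $(ii)$ of Opial: every weak sequential cluster point of $u$ lies in $\mathcal P_c$. Take $t_n\to+\infty$ with $u(t_n)\wto \bar u$. From the finite energy property, there is a sequence $t_n$ (after relabeling) with $\dot u(t_n)\to 0$ strongly; since $-\dot u(t_n) = s(u(t_n)) \in N_K(u(t_n)) + \C(u(t_n))$, we have $\mathrm{dist}(0, N_K(u(t_n)) + \C(u(t_n)))\to 0$. Writing $-\dot u(t_n) = \eta(t_n) + \sum_i\theta_i(t_n)v_i(t_n)$, the $\theta_i(t_n)$ live in the compact simplex $\S^q$ and the $v_i(t_n)$ are bounded (by H1), $\partial f_i$ bounded on bounded sets, using boundedness of the trajectory), so up to a further subsequence $\theta_i(t_n)\to\bar\theta_i$, $v_i(t_n)\wto \bar v_i$, and then $\eta(t_n)\wto\bar\eta$. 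The main obstacle is the closedness/demiclosedness step: one must pass to the limit in $\eta(t_n)\in N_K(u(t_n))$ and $v_i(t_n)\in\partial f_i(u(t_n))$ to get $\bar\eta\in N_K(\bar u)$ and $\bar v_i\in\partial f_i(\bar u)$. This uses the fact that the graphs of the maximal monotone operators $N_K$ and $\partial f_i$ are sequentially strong-weak closed in $\H\times\H$, which is standard (demiclosedness of maximal monotone operators), though it does require checking that the weak convergence of $u(t_n)$ and the mixed weak/strong convergence of the dual variables suffice — here one invokes that $\langle v_i(t_n) - p, u(t_n) - x\rangle \geq 0$ for all $(x,p)\in\mathrm{gph}\,\partial f_i$ passes to the limit because one factor converges strongly enough (indeed $u(t_n)\wto\bar u$ and $v_i(t_n)\wto\bar v_i$ — the product of two weakly convergent sequences does not pass to the limit in general, so the correct argument is to use $f_i(u(t_n))\to \lim f_i(u(t))$ and lower semicontinuity, or to note $u(t_n)\to\bar u$ strongly along a subsequence is \emph{not} available, hence one really must exploit $\limsup \langle v_i(t_n), u(t_n)\rangle \le \langle \bar v_i,\bar u\rangle$ via the energy estimate). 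Then $\bar\eta + \sum_i\bar\theta_i\bar v_i = 0$ with $(\bar\theta_i)\in\S^q$ gives $0\in N_K(\bar u) + \C(\bar u)$, i.e. $\bar u\in\mathcal P_c = \mathcal P_w$. With both Opial conditions verified, $u(t)\wto u_\infty\in\mathcal P_w$, completing the proof. I expect the demiclosedness/limit-passage in condition $(ii)$ to be the genuinely delicate step, since the vector field is neither continuous nor monotone and one only has weak convergence of the trajectory.
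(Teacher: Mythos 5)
Part $i)$ of your proposal is correct and is exactly the paper's argument: integrate (\ref{E:DecreaseProperty}) and use H2).

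For part $ii)$ there are two genuine gaps, both traceable to your choice $S=\mathcal P_w=\mathcal P_c$ in Opial's lemma. First, as you yourself observe, condition $(i)$ of Opial fails to close with this $S$: for an arbitrary $z\in\mathcal P_c$ the inequality $\langle v_i(t),u(t)-z\rangle\geq f_i(u(t))-f_i(z)$ has no sign, since a weak Pareto point may satisfy $f_i(z)>f_i(u(t))$ for some $i$. None of your fallback suggestions repairs this: $\int_0^{\infty}\|\dot u\|^2\,dt<\infty$ gives $\dot u\in L^2$, not $L^1$, so $\dot h_z=\langle\dot u,u-z\rangle$ need not be integrable and no quasi-Fej\'er/integrable-derivative argument is available; and there is no reason for $\dot h_z\leq 0$ eventually. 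The paper's resolution is to apply Opial with the different set $S=\{v\in K:\ f_i(v)\leq\inf_{t\geq 0}f_i(u(t))\ \forall i\}$, which is nonempty (any weak cluster point of the bounded trajectory lies in it, by weak lower semicontinuity of the $f_i$ and the descent property), and for which $f_i(z)\leq f_i(u(t))$ holds by construction, making $\dot h_z\leq 0$ immediate. Second, your verification of Opial condition $(ii)$ with $S=\mathcal P_c$ requires showing that \emph{every} weak sequential cluster point is Pareto critical; but the finite energy property only produces \emph{some} sequence $s_n\to+\infty$ along which $\dot u(s_n)\to 0$, and for a given cluster sequence $t_n$ with $u(t_n)\rightharpoonup\bar u$ you cannot "relabel" to also get $\dot u(t_n)\to 0$. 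The paper sidesteps this by reversing the order of the argument: Opial (with the sublevel-type $S$) first yields weak convergence of the whole trajectory to some $u_\infty\in S$; only then is the special sequence with vanishing velocity invoked, along which $u(t_n)\rightharpoonup u_\infty$ automatically, and the demiclosedness of $u\mapsto N_K(u)+\mbox{Conv}\{\partial f_i(u)\}$ (Lemma \ref{L:ClosedOperator}, for the weak$\,\times\,$strong topology) identifies $u_\infty$ as Pareto critical. Your concern about passing to the limit in the graphs of the individual $\partial f_i$ with only weak convergence of the $v_i(t_n)$ is legitimate; the paper's lemma avoids it by aggregating everything into the single subdifferential inequality for $\delta_K+\sum_i\lambda_{i,n}f_i$, where the only inner product involves the strongly convergent $w_n=-\dot u(t_n)\to 0$ and the function-value terms are handled by weak lower semicontinuity.
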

\begin{remark}\label{Pareto-opt} {\rm 
 a) Since each function $t \mapsto f_i(u(t))$ is  nonincreasing (see Proposition \ref{P:qualitative}), a natural condition insuring that the trajectory remains bounded  is that one of the functions $f_i$ has bounded  sublevel sets
 (see also Remark \ref{bounded orbit}).  \\
 b) Similarly, if  one of the functions $f_i$ has  relatively compact  sublevel sets (inf-compactness property), then the trajectory is relatively compact, and hence converges strongly 
 in $\H$.
It is an interesting  (open) question to find other conditions on the data ($f_i$ and $K$) which provide strong convergence of trajectories, and  extend the well-known conditions in the case of a single criterion.
}
\end{remark}
\begin{proof}
From Proposition \ref{P:qualitative} and by integrating (\ref{E:DecreaseProperty}), along with the fact that $f_i$ is bounded from below on $K$, we obtain
\begin{equation}\label{asymp9} 
		\int_0^{+\infty} \|\dot{u}(t)\|^2  dt  \leq f_i(u(0)) - {\inf}_K f_i.
\end{equation}
This proves items $i)$.

Let us now prove the weak convergence of any bounded trajectory $u$ of the (MOG) system. To that end we use  Opial's Lemma \ref {Opial}   with 
\begin{equation}\label{asymp10} 
	S= \left\{  v\in K: \  \forall i=1,...,q  \  \    f_i (v) \leq  \inf_{t\geq 0} f_i (u(t))             \right\}.
\end{equation}
Functions  $f_i$ are  convex continuous, and hence lower semicontinuous for the weak topology of $\mathcal H$. As well, the closed convex set $K$ is closed for the weak topology of $\mathcal H$.
The trajectory  $t\in [0, +\infty [ \mapsto u(t) \in \mathcal H$ has been assumed to be bounded in $\mathcal H$. As a consequence, every weak sequential cluster point of  the trajectory belongs to $S$, 
which is a closed convex non empty subset of $\mathcal H$.

$i)$ Take $z \in S$ and set, for any $t\geq 0$
\begin{equation}\label{asymp11}
h_z(t) = \frac{1}{2} \| u(t)- z 	\|^2.
\end{equation}
We have
\begin{equation}\label{asymp12}
\dot{h}_z(t) = \left\langle \dot{u}(t) , u(t)- z  \right\rangle .
\end{equation}
Since $u$ is a solution of (MOG), for almost all $t>0$ there exists 
\begin{equation}\label{asymp13}
\eta (t) \in  N_K(u(t)), \  \mbox{and}  \ v_i(t) \in \partial f_i(u(t)),\ \left(\theta_i(t)\right)\in \S^q \ 
\end{equation}
such that, 
\begin{equation}\label{asymp14}
  \dot u(t) +  \sum_{i=1}^{q} \theta_i (t) v_i(t)  + \eta (t)   = 0. \\
\end{equation}
By combining (\ref{asymp12}) and (\ref{asymp14}) we obtain
\begin{equation}\label{asymp15}
\dot{h}_z(t) + \sum_{i=1}^{q} \theta_i (t) \left\langle v_i(t)  , u(t)- z     \right\rangle +   \left\langle \eta (t) , u(t)- z      \right\rangle =0.
\end{equation}
On the one hand, since $\eta (t) \in  N_K(u(t))$ and $z \in K$
\begin{equation}\label{asymp16}
 \left\langle  \eta (t) ,  u(t)- z    \right\rangle \geq 0.
\end{equation}
On the other hand,  the convex subdifferential inequality at $u(t)$, and $v_i(t) \in \partial f_i(u(t))$ gives
\begin{equation}\label{asymp17}
f_i( z) \geq  f_i( u(t))  + \left\langle v_i(t)  , z - u(t)    \right\rangle.
\end{equation}
Since $z \in S$ we have $f_i( z) \leq  f_i( u(t))$, which gives
\begin{equation}\label{asymp18}
 \left\langle  v_i(t) ,  z-u(t)   \right\rangle \leq 0.
\end{equation}
As a consequence
\begin{equation}\label{asymp19}
 \sum_{i=1}^{q} \theta_i (t) \left\langle  v_i(t) ,  u(t) -z  \right\rangle \geq 0.
\end{equation}
Combining (\ref{asymp15}) with (\ref{asymp16}) and (\ref{asymp19}) we obtain
\begin{equation}\label{asymp20}
 \dot{h}_z(t) \leq 0.
\end{equation}
Hence, $h_z$ is a nonincreasing function, which proves item $i)$ of  Opial's Lemma \ref{Opial}.

Let us verify item  $ii)$ of Opial's Lemma \ref{Opial}.  Let $w-\lim u(t_n) = z$ for some sequence $t_n \rightarrow + \infty$. 
Since $u(t_n) \in K$ and $K$ is a closed convex subset of $\mathcal H$, we have $z \in K$. Moreover
\begin{align}\label{asymp21}
\inf_{t\geq 0} f_i (u(t)) &= \lim_{t\rightarrow + \infty} f_i (u(t))\\
                             &= \lim_{n\rightarrow + \infty} f_i (u(t_n))\\
                              & \geq f_i(z)
\end{align}
where the last inequality follows from the fact that $f_i$ is convex continuous, and hence lower semicontinuous for the weak topology of $\mathcal H$.
This being true for each $i=1,...,q$ we conclude that $z\in S$.
The two conditions of  Opial's Lemma \ref{Opial} are satisfied, which gives the weak convergence of each bounded trajectory of the (MOG) dynamic.
 Set
\begin{equation}\label{asymp210}
u(t) \rightharpoonup u_{\infty} \quad \mbox{weakly in} \ \mathcal H, \ \mbox{as } \ t \to + \infty,
\end{equation}
and show that  $u_{\infty}$    is  a Pareto critical point, and hence a weak Pareto optimum (Lemma \ref{equiv}).
The finite energy property (\ref{asymp3}) 
\begin{equation*}
\int_0^{+\infty} \|\dot{u}(t)\|^2 dt < +\infty
\end{equation*}
implies  
\begin{equation}\label{asymp202}
\mbox{liminfess}_{t \rightarrow + \infty } \|\dot{u}(t)\| =0.
\end{equation}
Since relations (\ref{def1}) and (\ref{def2}) are satisfied for almost  all $t>0$, (\ref{asymp202}) implies the existence of a sequence  $t_n \rightarrow + \infty $  such that
\begin{align}
&\dot{u}(t_n) \rightarrow 0 \  \mbox{strongly in  } \  \H  \label{asymp22} \\
& -\dot u(t_n)\in N_K(u(t_n)) + \mbox{{\rm Conv}}\left\{\partial f_i(u(t_n)) \right\}  \  \mbox{for each } \ n \in \mathbb N. \label{asymp220}
\end{align}
Moreover  by (\ref{asymp210})
\begin{equation}\label{asymp221}
u(t_n) \rightharpoonup u_{\infty} \quad \mbox{weakly in} \ \mathcal H.
\end{equation}
We conclude using (\ref{asymp22}), (\ref{asymp220}), (\ref{asymp221}),  and the following lemma which establishes a closure property for the operator $ N_K (\cdot)+ \mbox{{\rm Conv}}\left\{\partial f_i(\cdot) \right\}$.
\end{proof}

\begin{lemma}\label{L:ClosedOperator}
Under assumptions {\rm H0), H1)}, the multi-application
\begin{center}
$\begin{array}{rcl}
\mathcal{H} & \rightrightarrows & \mathcal{H}\\
u & \longmapsto & N_K(u) + \mbox{{\rm Conv}}\left\{\partial f_i(u) \right\}
\end{array}$
\end{center}
is demiclosed, i.e., its graph is sequentially closed for the  $\mbox{weak}-\mathcal H \times \mbox{strong}-\mathcal H $ topology.
\end{lemma}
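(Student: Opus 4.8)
The plan is to exhibit $\xi$ as a single convex subgradient at $u$, of a well-chosen function, namely $\delta_K+\sum_i\theta_i f_i$ for an appropriate limiting weight vector $\theta\in\mathcal{S}^q$; subdifferential inequalities pass to the limit under weak convergence far more comfortably than the inclusion defining $N_K$.

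First I would take $(u_n,\xi_n)$ in the graph of the operator with $u_n\rightharpoonup u$ weakly and $\xi_n\to\xi$ strongly, and decompose $\xi_n=\eta_n+\sum_i\theta_i^n v_i^n$ with $\eta_n\in N_K(u_n)$, $v_i^n\in\partial f_i(u_n)$, $(\theta_i^n)\in\mathcal{S}^q$. Since $u_n$ is bounded, H1) gives the $v_i^n$ bounded; since $\mathcal{S}^q$ is compact, I pass to a subsequence along which $\theta^n\to\theta\in\mathcal{S}^q$. The structural observation is that $\eta_n\in\partial\delta_K(u_n)$ and $\sum_i\theta_i^n v_i^n\in\sum_i\theta_i^n\partial f_i(u_n)=\partial\big(\sum_i\theta_i^n f_i\big)(u_n)$ (the additivity rule together with positive homogeneity, legitimate because the $f_i$ are continuous), hence $\xi_n\in\partial\big(\delta_K+\sum_i\theta_i^n f_i\big)(u_n)$. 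Writing the subdifferential inequality at $u_n$ and using $u_n\in K$ gives, for every $w\in K$,
\[
\sum_i\theta_i^n f_i(w)\ \ge\ \sum_i\theta_i^n f_i(u_n)+\langle\xi_n,w-u_n\rangle.
\]

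Next I would pass to the limit along the subsequence. The left-hand side converges to $\sum_i\theta_i f_i(w)$; the term $\langle\xi_n,w-u_n\rangle$ converges to $\langle\xi,w-u\rangle$ since $\xi_n\to\xi$ strongly while $w-u_n\rightharpoonup w-u$; and $\liminf_n\sum_i\theta_i^n f_i(u_n)\ge\sum_i\theta_i f_i(u)$, using weak lower semicontinuity of the convex continuous $f_i$, boundedness of the $f_i(u_n)$ (Lipschitz on bounded sets), and superadditivity of $\liminf$, the indices with $\theta_i=0$ being handled directly since then $\theta_i^n f_i(u_n)\to 0$. This yields $\sum_i\theta_i f_i(w)\ge\sum_i\theta_i f_i(u)+\langle\xi,w-u\rangle$ for all $w\in K$, hence, trivially also for $w\notin K$ and since $u\in K$ (the set $K$ being weakly closed), $\xi\in\partial\big(\delta_K+\sum_i\theta_i f_i\big)(u)$. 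A last application of the additivity rule and positive homogeneity gives $\partial\big(\delta_K+\sum_i\theta_i f_i\big)(u)=N_K(u)+\sum_i\theta_i\partial f_i(u)\subseteq N_K(u)+\mbox{Conv}\left\{\partial f_i(u)\right\}$, which is the claimed demiclosedness.

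The hard point is conceptual rather than computational: one cannot pass to the limit directly in $\eta_n\in N_K(u_n)$, because the natural route leaves the bilinear term $\langle\eta_n,u-u_n\rangle$ with both factors converging only weakly --- and indeed the graph of $N_K$ is not sequentially closed for the weak$\times$weak topology, nor is $u\mapsto\mbox{Conv}\left\{\partial f_i(u)\right\}$ monotone (cf. Example 2). The device of absorbing $N_K$ together with the convex combination of the $\partial f_i$ into a single subdifferential is precisely what removes this difficulty, since in the resulting one-sided inequality the only genuinely delicate term, $\langle\xi_n,w-u_n\rangle$, involves exactly the strongly convergent sequence $\xi_n$. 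A secondary care point, and the only place a short computation is really needed, is the bookkeeping for the varying weights $\theta^n$, which is why the cases $\theta_i>0$ and $\theta_i=0$ are separated when taking $\liminf_n\sum_i\theta_i^n f_i(u_n)$.
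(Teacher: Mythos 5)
Your proof is correct and follows essentially the same route as the paper's: extract convergent weights from the compact simplex, absorb $N_K$ and the convex combination into the single subdifferential $\partial\bigl(\delta_K+\sum_i\theta_i^n f_i\bigr)(u_n)$ via the Moreau--Rockafellar sum rule, pass to the lower limit in the subdifferential inequality using weak lower semicontinuity and the strong convergence of $\xi_n$, and undo the sum rule at the limit. Your explicit separation of the indices with $\theta_i=0$ is exactly the point the paper flags when it remarks that the $f_i$ being finitely valued avoids the $0\times\infty$ issue.
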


\begin{proof}
Let $(u_n,w_n)$ be a sequence in the graph of  $N_K + \mathcal{C}$ where $\mathcal{C}(u)=\mbox{{\rm Conv}}\left\{\partial f_i(u) \right\}$. Suppose that $u_n$ converges weakly to  $\bar u \in K$, that $w_n$  converges strongly to $\bar w$, and prove that $\bar w \in N_K (\bar u) + \mathcal{C}(\bar u)$.
For each $n \in \N$,  there exists $q_n \in N_K(u_n)$, $p_{i,n} \in \partial f_i(u_n)$, $\lambda_{i,n} \in [0,1]$, $i=1,...,q$, such that
\begin{equation}\label{co1}
w_n = q_n + {\sum\limits_{i=1}^{q} \lambda_{i,n}p_{i,n}} ~~ \text{ \rm and } ~~ {\sum\limits_{i=1}^{q} \lambda_{i,n} = 1}.
\end{equation}
For each $n$, $(\lambda_{i,n})_{i=1,...,q}$ belongs to the unit simplex in ${\mathbb R}^q$, which is a compact set.  Hence we can extract a  subsequence (still noted $(\lambda_{i,n})$ to simplify the notation) such that, for each $i=1,...,q$ 
 \begin{equation}\label{asymp25}
\lambda_{i,n} \rightarrow  \bar \lambda_i,
\end{equation}
with
\begin{equation}\label{asymp26}
  0 \leq \bar \lambda_i \leq 1, \ \sum_{i=1}^{q} \bar \lambda_i=1.
\end{equation}
Noticing that the functions $f_i$ are convex continuous, thanks to the Moreau-Rockafellar additivity rule for the subdifferential of a sum of convex functions, we can rewrite
(\ref{co1}) as follows
\begin{equation}\label{co2}
w_n \in \partial \left( \delta_K + \sum\limits_{i=1}^{q} \lambda_{i,n}  f_i \right)(u_n),
\end{equation}
where $\delta_K$ is the indicator function of $K$.
Equivalently, for any $\xi \in \H$
\begin{equation}\label{co3}
\sum\limits_{i=1}^{q} \lambda_{i,n}  f_i (\xi)  + \delta_K (\xi) \geq \sum\limits_{i=1}^{q} \lambda_{i,n}  f_i (u_n)  + \delta_K (u_n)  +  \left\langle w_n  , \xi - u_n   \right\rangle.
\end{equation}
Let us pass to the lower limit 
in (\ref{co3}). By using (\ref{asymp25}), the lower semicontinuity property of the $f_i$ and $\delta_K$ for the weak topology of $\H$ ($K$ is closed convex and hence weakly closed), and the weak (resp. strong) convergence of $u_n$ (resp. $w_n$), we obtain
\begin{equation}\label{co4}
\sum\limits_{i=1}^{q} \bar \lambda_i  f_i (\xi)  + \delta_K (\xi) \geq \sum\limits_{i=1}^{q} \bar \lambda_i  f_i (\bar u)  + \delta_K (\bar u)  +  \left\langle \bar w  , \xi - \bar u   \right\rangle.
\end{equation}
In the above limit process, we use the fact that the functions $f_i$ are finitely valued (otherwise we would face the delicate question concerning the product $0 \times \infty$).
Using again the Moreau-Rockafellar additivity rule, we equivalently obtain
 \[
 \bar w  \in \sum_{i=1}^{q} \bar \lambda_i  \partial f_i(\bar u)  + N_K (\bar u),
 \]
 which, with (\ref{asymp26}),  expresses that $( \bar u, \bar w)$   is in the graph of  $N_K + \mathcal{C}$.
\end{proof}

\begin{remark}\label{bounded orbit} {\rm 
Suppose that there exists an ideal solution $\bar{z}$ to (CMO). Then, for any solution trajectory of (MOG), $\bar{z} \in S$, where $S$ has been defined in (\ref{asymp10}). 
Following the proof of  Theorem \ref{asymp1}, the function  $h_{\bar{z}}(\cdot)= \frac{1}{2} \| u(\cdot)- \bar{z} 	\|^2$ is nonincreasing. Thus, in that case,  any trajectory of (MOG) is bounded.
We recover the   fact that, in the case of a single convex objective function, the trajectories of the steepest descent equation are bounded iff the solution set is not empty.
}
\end{remark}

\section{Existence of strong global solutions}\label{ex-yo}

\noindent In this section, it is assumed that $  \mathcal H =  \mathbb R^d $ is a finite dimensional Euclidean space. 
This is because our proof of the existence of solutions to the (MOG) dynamic is based on the Peano theorem, and not on the Cauchy-Lipschitz.
It is likely that the proof can be adapted to the case of infinite dimension by making
ad hoc assumption on the data (as inf-compactness).
This is an interesting topic for further studies, particularly involving applications to PDEs.
Our approach is based on the regularization of the non-smooth functions $f_i$  by the Moreau-Yosida approximation. 
This approximation brings us back to the situation studied in \cite{AG1}, which considers the case of differentiable functions. 

\subsection{Statement of the result}
\begin{theorem} \label{basic-exist} 
 Let $  \mathcal H $ be a finite dimensional Hilbert space. Let us make assumptions {\rm H0), H1), H2)}.
Then, for any initial data $u_0\in K$, there exists a strong global solution $u:  [0,+\infty [ \rightarrow \mathcal H$ of {\rm(MOG)} system {\rm(\ref{mult-steep-desc})},
which satisfies $u(0)= u_0$.
\end{theorem}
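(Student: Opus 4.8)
The plan is to construct the solution by a vanishing-viscosity-type regularization: replace each nonsmooth $f_i$ by its Moreau–Yosida approximation $f_i^\lambda$ (which is convex, $C^{1,1}$, with $\nabla f_i^\lambda = (\partial f_i)_\lambda$ the Yosida approximation of $\partial f_i$), solve the regularized (MOG) system
\begin{equation*}
\dot u_\lambda(t) + \Big(N_K(u_\lambda(t)) + \mbox{Conv}\{\nabla f_i^\lambda(u_\lambda(t))\}\Big)^0 = 0, \qquad u_\lambda(0)=u_0,
\end{equation*}
invoking the existence theory of \cite{AG1} for the differentiable case, and then pass to the limit $\lambda \to 0$. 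The regularized vector field is only Hölder continuous in general, so Peano (not Cauchy–Lipschitz) gives existence of $u_\lambda$ on $[0,+\infty[$, which is why finite dimensionality of $\mathcal H$ is assumed. This is the reason the theorem is stated in $\mathbb R^d$.

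First I would record the uniform a priori estimates. Writing the regularized equation in the form $\dot u_\lambda + \eta_\lambda + \sum_i \theta_{i,\lambda} \nabla f_i^\lambda(u_\lambda) = 0$ with $\eta_\lambda(t)\in N_K(u_\lambda(t))$ and $(\theta_{i,\lambda}(t))\in\S^q$, the same computation as in Proposition~\ref{P:qualitative} gives the descent property $\frac{d}{dt} f_i^\lambda(u_\lambda(t)) \le -\|\dot u_\lambda(t)\|^2$ and the energy bound $\int_0^{+\infty}\|\dot u_\lambda\|^2\,dt \le f_i^\lambda(u_0) - \inf_{\mathcal H} f_i \le f_i(u_0) - \inf_{\mathcal H} f_i$, uniformly in $\lambda$ (using $f_i^\lambda \le f_i$ and $\inf f_i^\lambda = \inf f_i$). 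On any $[0,T]$ one also gets, exactly as in Proposition~\ref{P:qualitative}~$ii)$, that $\|\dot u_\lambda(t)\| \le \|\sum_i \theta_{i,\lambda}(t)\nabla f_i^\lambda(u_\lambda(t))\|$; since the $u_\lambda$ stay in a bounded set (their velocities are $L^\infty(0,T)$-bounded once one controls $\nabla f_i^\lambda$ on bounded sets — here one uses that $\|\nabla f_i^\lambda(u)\| \le \|(\partial f_i(u))^0\|$ is bounded on bounded sets uniformly in $\lambda$), the family $\{u_\lambda\}$ is equi-Lipschitz on $[0,T]$. By Arzelà–Ascoli, a subsequence converges uniformly on compact intervals to some absolutely continuous $u$ with $u(0)=u_0$, $u(t)\in K$, and $\dot u_\lambda \rightharpoonup \dot u$ weakly in $L^2(0,T;\mathcal H)$; moreover $\theta_{i,\lambda}\rightharpoonup \theta_i$ weak-$*$ in $L^\infty$ with $(\theta_i(t))\in\S^q$ a.e.

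The hard part is passing to the limit in the inclusions, because the vector field is neither continuous nor monotone and the Yosida approximants introduce the extra subtlety that $\nabla f_i^\lambda(u_\lambda(t)) \in \partial f_i(J_\lambda^{\partial f_i}u_\lambda(t))$, evaluated at the \emph{resolvent} point, not at $u_\lambda(t)$. I would handle this by the monotonicity/demiclosedness machinery: since $\|u_\lambda(t) - J_\lambda^{\partial f_i}u_\lambda(t)\| = \lambda\|\nabla f_i^\lambda(u_\lambda(t))\| \to 0$ uniformly on $[0,T]$, the resolvent points converge to $u(t)$ as well. Then, testing the subdifferential inequality $f_i(\xi) \ge f_i(J_\lambda u_\lambda(t)) + \langle \nabla f_i^\lambda(u_\lambda(t)), \xi - J_\lambda u_\lambda(t)\rangle$ against arbitrary $\xi$, integrating against nonnegative test functions in $t$, and passing to the limit (using strong convergence of $u_\lambda$ and the finite-valuedness of the $f_i$ to avoid $0\cdot\infty$, exactly as in Lemma~\ref{L:ClosedOperator}), one identifies a limit $v_i(t)\in\partial f_i(u(t))$ and $\eta(t)\in N_K(u(t))$ with $\dot u(t) + \eta(t) + \sum_i\theta_i(t)v_i(t) = 0$ a.e.; here one extracts $v_i$ as a weak-$L^2$ limit of $\nabla f_i^\lambda(u_\lambda)$ (bounded in $L^\infty$, hence in $L^2$ on $[0,T]$), and closedness of the graph of $\partial f_i$ in the weak$\times$weak $L^2$ sense (which holds because $\partial f_i$ is maximal monotone) gives the inclusion. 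Finally, to recover the \emph{lazy} property $\dot u(t) + (N_K(u(t)) + \mbox{Conv}\{\partial f_i(u(t))\})^0 = 0$, I would argue that $\|\dot u_\lambda(t)\| \le \|s_\lambda(u_\lambda(t))\|$ is the minimal-norm selection for the regularized set and use lower semicontinuity of the norm together with the inclusion $-\dot u(t) \in N_K(u(t)) + \mbox{Conv}\{\partial f_i(u(t))\}$ already established, plus a comparison showing $\|\dot u(t)\| \le \liminf_\lambda \|\dot u_\lambda(t)\| \le \|(N_K(u(t))+\mbox{Conv}\{\partial f_i(u(t))\})^0\|$ — the first by weak lower semicontinuity after passing to a further a.e.-convergent subsequence in $t$, the second by an upper-semicontinuity (outer-limit) estimate on the regularized sets. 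This last minimal-norm identification, reconciling the weak limit of the velocities with the pointwise minimal-norm element of the limiting set, is where the main technical work lies.
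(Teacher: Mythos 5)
Your overall architecture is the same as the paper's: Moreau--Yosida regularization of each $f_i$, existence for the regularized system from \cite{AG1}, uniform energy and equi-Lipschitz estimates, Ascoli in finite dimension, and a passage to the limit $\lambda\to 0$. The a priori estimates and the extraction of the limits $u$, $\dot u$, $\eta$, $v_i$, $\theta_i$ are correct. However, there are two genuine gaps, and they occur exactly at the two points the paper identifies as the technical core of the proof.

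First, the limiting equation. You take $v_i$ as the weak $L^2$ limit of $\nabla f_{i,\lambda}(u_\lambda)$ and $\theta_i$ as the weak-$*$ limit of $\theta_{i,\lambda}$, and then assert $\dot u+\eta+\sum_i\theta_i v_i=0$. But the approximate equation (\ref{app3}) contains the product $\sum_i\theta_{i,\lambda}\nabla f_{i,\lambda}(u_\lambda)$ of two only weakly converging sequences, and the weak limit of such a product is not the product of the limits (the $\theta_{i,\lambda}$ and the Yosida gradients may oscillate in a correlated way when $\partial f_i(u(t))$ is not a singleton). Nothing in your argument shows that this product converges weakly to $\sum_i\theta_i v_i$, nor even that $-\dot u(t)-\eta(t)$ belongs to $\mbox{Conv}\left\{\partial f_i(u(t))\right\}$. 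The paper circumvents this by never splitting the sum: it keeps $w_\lambda=-\dot u_\lambda-\eta_\lambda$ as a whole (which does converge weakly), tests the convex inequality for $\sum_i\theta_{i,\lambda}f_{i,\lambda}$ against an arbitrary $\xi\in L^{\infty}(0,T;\H)$, passes to the limit using the uniform convergence $f_{i,\lambda}(u_\lambda)\to f_i(u)$ of Lemma \ref{lim06} and weak--strong pairings, obtains $-\dot u-\eta\in\partial I(u)$ for the integral functional (\ref{int1}), and then invokes Rockafellar's duality theorem for integral functionals plus the measurable-selection Lemma \ref{L:Rock} to produce admissible $v_i(t)\in\partial f_i(u(t))$ (which are in general not the weak limits of the individual $\nabla f_{i,\lambda}(u_\lambda)$).

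Second, the lazy property. The chain $\|\dot u(t)\|\le\liminf_\lambda\|\dot u_\lambda(t)\|\le\|\left(N_K(u(t))+\mbox{Conv}\left\{\partial f_i(u(t))\right\}\right)^0\|$ fails at both links. Weak convergence of $\dot u_\lambda$ in $L^2(0,T;\H)$ does not permit extraction of an a.e.\ pointwise convergent subsequence, so the first inequality is not available pointwise; only integrated versions such as $\int\alpha\|\dot u\|^2\le\liminf\int\alpha\|\dot u_\lambda\|^2$ hold. The second inequality would require inner semicontinuity of $u\mapsto N_K(u)+\mbox{Conv}\left\{\partial f_i(u)\right\}$ along $u_\lambda(t)\to u(t)$, which fails: if $u_\lambda(t)$ lies in the interior of $K$ while $u(t)\in\partial K$, then $N_K(u_\lambda(t))=\{0\}$ and the minimal norm over the approximate set need not come down to that over the limit set; likewise the Yosida gradient selects a single element near $\partial f_i(u(t))$, not the whole convex hull. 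The paper's route is different: it starts from the obtuse-angle inequality (\ref{sl4}), valid for \emph{every} $\S^q$-valued weight $\theta$, integrates against nonnegative test functions, passes to the limit (weak lower semicontinuity for the velocity term; integration by parts and Lemma \ref{lim06} for the other), extends by density from Lipschitz to arbitrary $L^\infty$ weights via Fatou, and so obtains the variational inequality (\ref{sl12}) characterizing $-\dot u(t)$ as the projection of the origin. Both of these steps must be supplied before your argument is complete.
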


\begin{remark}
{\em In Theorem \ref{basic-exist}, for any $u_0 \in K$,
we claim the existence of a strong global solution $u:  [0,+\infty [ \rightarrow \mathcal H$ of {\rm(MOG)} system,  satisfying the Cauchy data $u(0)=u_0$.
 By definition of a strong solution, $u$ is absolutely continuous
on any finite time interval $[0,T]$, but from Proposition \ref{P:qualitative} we know it is  moreover Lipschitz continuous.    }
\end{remark}

\noindent In the above theorem, we only claim  existence. Without further assumptions, uniqueness is not guaranteed. 
Indeed, the following proof of existence relies on Peano,  not  Cauchy-Lipschitz theorem. Before entering the proof of  existence, we will briefly discuss the question of uniqueness which remains an open question.
 
\begin{remark}\label{uniqueness} {\rm  
 In the unconstrained case, and for convex differentiable objective functions, illustrative examples of the (MOG) dynamic were given in Section \ref{S:Examples}. In these elementary situations,  we have been able to explicitely compute  the vector field $v\mapsto s(v)$. We observed that it can be Lipschitz continuous (Example 1 and 2) or only H\"older continuous (Example 3).
This naturally raises the following question: 
in the unconstrained case, and for differentiable objective functions, what are the assumptions ensuring that the vector field $v\mapsto s(v)$ is Lipschitz continuous (recall that it is H\"older continuous, see \cite{AG1})? This is clearly a key property for uniqueness for (MOG).

}
\end{remark}

The end of this section is devoted to the proof of Theorem \ref{basic-exist}, which is quite technical. To make reading easier, the proof has been divided into several stages.
First of all, let us bring some additional results to \cite{AG1}, which concern the smooth case, and which will be useful for our study.

\subsection{The smooth case, complements}
Let us suppose that the $f_i$ are convex differentiable functions. Following  \cite{AG1}, for any $u_0\in K$, there exists a strong global solution $u: [0,+\infty[ \rightarrow \mathcal H$ of the Cauchy problem
\begin{equation}\label{bas100}
\begin{cases}
 \dot u(t) + \bigg(N_K(u(t)) + \mbox{{\rm Conv}}\left\{\nabla f_i(u(t)) \right\} \bigg)^{0}= 0,\\
u(0)= u_0.
\end{cases}
\end{equation}
The concept of solution $ u $ is as follows.

$\left(i\right)$ $u: [0,+\infty[ \rightarrow \mathcal H$
is  absolutely continuous on each  interval
$\left[0,T\right]$, $0<T<+\infty;$

 $\left(ii\right)$ there exists $\eta: [0,+\infty[ \rightarrow \mathcal H$ and $w: [0,+\infty[ \rightarrow \mathcal H$ which satisfy
 \begin{align}
 & \eta\in L^2(0,T; \mathcal H), \quad w\in L^{\infty}(0,T; \mathcal H) \quad \mbox{for all} \ T>0; \label{bas201}\\
 & \eta(t) \in N_K(u(t)), \  w(t) \in \mbox{{\rm Conv}}\left\{\nabla f_i(u(t)) \right\}   \quad \mbox{for almost all} \ t>0; \label{bas202}\\
 & \eta(t) + w(t) = \bigg( N_K(u(t)) + \mbox{{\rm Conv}}\left\{\nabla f_i(u(t)) \right\}) \bigg)^{0} \quad \mbox{for almost all} \ t>0; \label{bas203}\\
 &  \dot u(t) + \eta(t) + w(t)= 0 \quad \mbox{for almost all} \ t>0. \label{bas204}
 \end{align}
Let us make precise (\ref{bas202}).

\begin{lemma}\label{basic-sel} Let $u$ be a solution of {\rm(\ref{bas100})}, and  $\eta,  w$ the associated functions satisfying {\rm(\ref{bas201})-(\ref{bas202})-(\ref{bas203})-(\ref{bas204})}. Then
 $w(t) \in \mbox{{\rm Conv}}\left\{\nabla f_i(u(t)) \right\}$ can be written as follows:
\begin{equation}
\label{bas301}
w(t)= \sum_i \theta_i(t) \nabla f_i(u(t))
\end{equation}
with
 $\theta_{i}\in L^{\infty}(0, + \infty) $, $i=1,2,...,q$,  \  and for almost all $t>0, \ (\theta_i(t))\in \S^q$.
\if{Thus, equation {\rm(\ref{bas204})}  becomes
\begin{equation}\label{bas303}
  \dot{u}(t) +  \sum_{i=1}^{q} \theta_{i} (t) \nabla f_{i}(u(t))   + \eta (t)  = 0. 
\end{equation}}\fi
\end{lemma}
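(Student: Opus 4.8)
\textbf{Proof proposal for Lemma \ref{basic-sel}.} The plan is to use a measurable selection argument to recover the coefficients $\theta_i(t)$ from the datum $w(t)$, together with the qualitative bounds already available for the smooth dynamic. First I would fix $T>0$ and work on $[0,T]$. By hypothesis $w(t) \in \mbox{Conv}\left\{\nabla f_i(u(t))\right\}$ for almost all $t$, which by definition of the closed convex hull means that for each such $t$ there exists $(\theta_i(t)) \in \S^q$ with $w(t) = \sum_i \theta_i(t) \nabla f_i(u(t))$. The point is to show that these coefficients can be chosen so that $t \mapsto \theta_i(t)$ is measurable and bounded.

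The key step is measurability. Consider the set-valued map
\[
\Gamma(t) = \left\{ \theta \in \S^q : \ w(t) = \sum_{i=1}^q \theta_i \nabla f_i(u(t)) \right\}.
\]
Since $u$ is absolutely continuous, hence continuous, and each $\nabla f_i$ is continuous (the $f_i$ being convex differentiable, their gradients are continuous), the map $t \mapsto (\nabla f_1(u(t)),\dots,\nabla f_q(u(t)))$ is continuous, and $t\mapsto w(t)$ is measurable. For each $t$, $\Gamma(t)$ is a nonempty closed subset of the compact simplex $\S^q$, defined by finitely many linear constraints whose coefficients depend measurably on $t$; a standard argument (e.g. the graph of $\Gamma$ is measurable in $[0,T]\times\S^q$, and $\Gamma$ has nonempty closed values) shows that $\Gamma$ is a measurable multifunction. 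By the Kuratowski--Ryll-Nardzewski measurable selection theorem, there exists a measurable selection $t \mapsto (\theta_i(t)) \in \Gamma(t)$. Each $\theta_i$ takes values in $[0,1]$, so $\theta_i \in L^\infty(0,T)$; since $T>0$ was arbitrary and the selections on overlapping intervals may be patched (or one simply runs the selection argument once on $[0,+\infty[$), we get $\theta_i \in L^\infty(0,+\infty)$ with $(\theta_i(t)) \in \S^q$ for almost all $t>0$, which is exactly \eqref{bas301}.

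I expect the main obstacle to be the verification that $\Gamma$ is genuinely a measurable multifunction with the regularity needed to apply a selection theorem — in particular, confirming nonemptiness almost everywhere (which follows from \eqref{bas202}) and that the measurable-graph or lower-semicontinuity hypothesis of the chosen selection theorem is met given that $w$ is only measurable while the $\nabla f_i(u(\cdot))$ are continuous. A clean alternative that sidesteps the abstract machinery: since $\mbox{Conv}\{\nabla f_i(u(t))\}$ is the image of the compact convex simplex $\S^q$ under the affine map $\theta \mapsto \sum_i \theta_i \nabla f_i(u(t))$, one can define $\theta(t)$ as the minimal-norm element of $\Gamma(t)$, i.e. the projection of $0$ onto the (closed convex) set $\Gamma(t)$; this singles out a canonical choice and its measurability follows from continuity properties of the projection together with measurability of the data. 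Either route delivers the bounded measurable selection, and the final remark (that \eqref{bas204} then reads $\dot u(t) + \sum_i \theta_i(t)\nabla f_i(u(t)) + \eta(t) = 0$) is immediate by substitution.
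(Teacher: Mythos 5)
Your proposal is correct and takes essentially the same approach as the paper: both arguments reduce the lemma to a measurable selection of simplex coefficients from the fiber $\Gamma(t)=\{\theta\in\S^q:\ w(t)=\sum_i\theta_i\nabla f_i(u(t))\}$, which is nonempty and closed for almost every $t$, and then observe that boundedness gives $\theta_i\in L^\infty$. The only difference is the packaging of measurability: the paper realizes $\Gamma(t)$ as the argmin of the convex normal integrand $j(t,\theta)=\|\eta(t)+\sum_i\theta_i\nabla f_i(u(t))\|+\delta_{\S^q}(\theta)$ and invokes Rockafellar--Wets, whereas you argue measurability of $\Gamma$ directly (via a Carath\'eodory zero-set) and apply Kuratowski--Ryll-Nardzewski; since the argmin set coincides with $\Gamma(t)$, the two routes are interchangeable.
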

\begin{proof} From  (\ref{bas203}) we see that for almost all $t>0$
\begin{equation}\label{bas304}
  w(t) = \mbox{proj}_{\mathcal{C}(u(t))} (-\eta(t)),
\end{equation}
where $\mathcal{C}(u(t))= \mbox{{\rm Conv}}\left\{\nabla f_i(u(t)) \right\}$.
Equivalently, $ w(t) =\sum_i \theta_i(t) \nabla f_i(u(t))$ for any  $ \theta (t)=(\theta_i (t))$ such that
\begin{equation}\label{bas305}
  \theta(t) \in \mbox{argmin} \left\{j(t, \theta): \  \theta \in \mathbb R^q              \right\}
\end{equation}
where
\begin{equation}\label{bas306}
 j(t, \theta) = \| \eta(t) + \sum_i \theta_i   \nabla f_{i}(u(t))  \|   + \delta_{\S^q} (\theta),          
\end{equation}
where $\delta_{\S^q}$ is the indicator function of $\S^q$. 
The crucial point is to prove that we can take the  $\theta_i (t)$ measurable.
Since $j: [0,+\infty[ \times {\mathbb R}^q \rightarrow \mathbb R \cup \left\{+\infty\right\}$ is a positive (convex) normal integrand, the mapping 
$ t \mapsto \mbox{argmin} j(t, \cdot)$ is measurable, and hence admits a measurable selection $t \mapsto \theta(t)$, see \cite[Corollary 14.6; Theorem 14.37]{RW}.
Hence, we can write $w(t)= \sum_i \theta_i(t) \nabla f_i(u(t))$, with $\theta_i$ measurable, and $\theta (t) \in \S^q$. Since $\theta_i$ is bounded, we have 
$$w(t)= \sum_i \theta_i(t) \nabla f_i(u(t)) \  \mbox{and} \ \theta_{i}\in L^{\infty}(0, + \infty), \ \theta(t) \in \S^q \ \mbox{a.e.}\  t>0. $$\end{proof}
Let us now return to our setting involving non-smooth objective functions $f_i$.

\subsection{Approximate equations}

The main difficulty  comes from the discontinuity of the vector field which governs the (MOG) dynamic (\ref{mult-steep-desc}).
 As a main ingredient of our approach, we use the Moreau-Yosida approximation of the convex functions $f_i$ (equivalently the Yosida approximation of the maximal monotone operators $\partial f_i$), $i=1,...,q$. This regularization method  is widely used in  nonsmooth convex analysis, see \cite{Att00}, \cite{AE}, \cite{BC},   \cite{Br}, \cite{Z} for a detailed presentation.  Its main properties are summarized in the following statement.
 
\begin{proposition}\label{MY} Let $\Phi: \mathcal H \rightarrow \mathbb R \cup \left\{+ \infty \right\}$  be a closed convex proper function. 
The Moreau-Yosida approximation of index $\lambda  >0$  of $\Phi$ is the function ${\Phi}_{\lambda}: \mathcal H \rightarrow \mathbb R$ which is defined 
for all $\ v\in \mathcal H$  by 
	\begin{equation}\label{mult-gsd3}
	    {\Phi}_{\lambda}(v) = \inf \left\{ \Phi (\xi)  +    \frac{1}{2 \lambda}  \|v - \xi\|^2:  \quad \xi \in   \mathcal H          \right\}.  
\end{equation}
\begin{enumerate}\label{mult-gsd4}
	\item The infimum in {\rm(\ref{mult-gsd3})} is attained at a unique point $J_{\lambda}v \in \mathcal H$, which satisfies		
\begin{align} \label{MY1}
	&{\Phi}_{\lambda}(v) =  \Phi (J_{\lambda}v)  +    \frac{1}{2 \lambda}  \|v - J_{\lambda}v\|^2;\\
	 & J_{\lambda}v   +   \lambda   \partial \Phi ( J_{\lambda}v) \ni v.  
\end{align}		
$J_{\lambda}= (I + \lambda   \partial \Phi)^{-1}: \mathcal H \rightarrow \mathcal H$ is everywhere defined and nonexpansive. It is  called the resolvent of index $\lambda$ of $A= \partial \Phi$.
	\item ${\Phi}_{\lambda}$  is   convex,  and continuously differentiable. Its gradient at $v \in  \mathcal H$ is equal to
	\begin{equation}\label{mult-gsd5}
	\nabla{\Phi}_{\lambda}(v) =     \frac{1}{\lambda}  (v - J_{\lambda}v).
\end{equation}
	\item  The operator $A_{\lambda}= \nabla{\Phi}_{\lambda}=\frac{1}{\lambda}  (I - J_{\lambda})$ is called the Yosida approximation of index $\lambda$ of the maximal monotone operator $A= \partial \Phi$. It is Lipschitz continuous with Lipschitz constant $\frac{1}{\lambda} $.
	\item For any $v \in \mbox{dom}A$, \   $\|A_{\lambda}v   \| \leq \|A^0 (v )  \|$, ($A^0 (v )$ is the element of minimal norm of $A(v)$).
	\item For any $v \in \mathcal H$,  ${\Phi}_{\lambda}(v) \uparrow \Phi (v)$ \ as \ $\lambda \downarrow 0$.
\end{enumerate}
\end{proposition}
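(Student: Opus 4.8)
The plan is to deduce the five items from the variational characterization of the proximal map, along the standard lines of nonsmooth convex analysis (see \cite{Att00}, \cite{BC}, \cite{Br}). For fixed $v\in\mathcal H$ and $\lambda>0$, the function $\xi\mapsto\Phi(\xi)+\frac{1}{2\lambda}\|v-\xi\|^2$ is proper, lower semicontinuous, coercive and strongly convex (the quadratic term is $\frac{1}{\lambda}$-strongly convex and $\Phi$ is convex), hence it attains its infimum at a unique point, which we name $J_\lambda v$; this already gives the first sentence of item (1) and the identity (\ref{MY1}). Writing Fermat's rule $0\in\partial\big(\Phi+\frac{1}{2\lambda}\|v-\cdot\|^2\big)(J_\lambda v)$ and invoking the subdifferential additivity rule (legitimate since the quadratic term is finite and continuous), we obtain $0\in\partial\Phi(J_\lambda v)+\frac{1}{\lambda}(J_\lambda v-v)$, i.e. $v\in(I+\lambda\partial\Phi)(J_\lambda v)$. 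Conversely, $I+\lambda\partial\Phi$ is surjective by maximal monotonicity of $\partial\Phi$ and Minty's theorem, and injective by strict monotonicity, so $J_\lambda=(I+\lambda\partial\Phi)^{-1}$ is single-valued and everywhere defined. Firm nonexpansiveness of $J_\lambda$ (hence nonexpansiveness) follows from monotonicity of $\partial\Phi$: with $x_i=J_\lambda v_i$ one has $\frac{1}{\lambda}(v_i-x_i)\in\partial\Phi(x_i)$, so $\langle(v_1-x_1)-(v_2-x_2),x_1-x_2\rangle\geq0$, i.e. $\|x_1-x_2\|^2\leq\langle v_1-v_2,x_1-x_2\rangle$.

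For item (2), convexity of $\Phi_\lambda$ is clear, being the infimal convolution of the convex functions $\Phi$ and $\frac{1}{2\lambda}\|\cdot\|^2$. To obtain differentiability and the gradient formula, I would test the definition of $\Phi_\lambda(v+h)$ with the competitor $J_\lambda v$ and expand the square; together with (\ref{MY1}) this yields the quadratic majorization $\Phi_\lambda(v+h)\leq\Phi_\lambda(v)+\frac{1}{\lambda}\langle v-J_\lambda v,h\rangle+\frac{1}{2\lambda}\|h\|^2$ for every $h$. Evaluating at $th$, dividing by $t$, letting $t\downarrow0$, and using that the difference quotient of a convex function is monotone, one finds that the directional derivative $d\Phi_\lambda(v,\cdot)$ is dominated by the linear form $\langle\frac{1}{\lambda}(v-J_\lambda v),\cdot\rangle$; applying this to $h$ and to $-h$ and using sublinearity of $d\Phi_\lambda(v,\cdot)$ forces equality, so $\Phi_\lambda$ is G\^ateaux differentiable with $\nabla\Phi_\lambda(v)=\frac{1}{\lambda}(v-J_\lambda v)$. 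Item (3) follows because $I-J_\lambda$ is firmly nonexpansive whenever $J_\lambda$ is, hence nonexpansive, so $A_\lambda=\frac{1}{\lambda}(I-J_\lambda)$ is $\frac{1}{\lambda}$-Lipschitz; in particular $\nabla\Phi_\lambda$ is continuous, which upgrades G\^ateaux to Fr\'echet differentiability and shows $\Phi_\lambda\in C^1$.

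Item (4): for $v\in\dom\partial\Phi$, set $p=A^0(v)\in\partial\Phi(v)$ and recall $A_\lambda v=\frac{1}{\lambda}(v-J_\lambda v)\in\partial\Phi(J_\lambda v)$; monotonicity of $\partial\Phi$ applied to the points $v$ and $J_\lambda v$, together with $J_\lambda v-v=-\lambda A_\lambda v$, gives $\|A_\lambda v\|^2\leq\langle p,A_\lambda v\rangle\leq\|A^0(v)\|\,\|A_\lambda v\|$, which is the claim. Item (5): taking $\xi=v$ in (\ref{mult-gsd3}) gives $\Phi_\lambda(v)\leq\Phi(v)$, and $\lambda\mapsto\Phi_\lambda(v)$ is nonincreasing because the penalty coefficient $\frac{1}{2\lambda}$ decreases with $\lambda$; hence $\Phi_\lambda(v)$ increases to some limit $\leq\Phi(v)$ as $\lambda\downarrow0$. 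For the reverse inequality, when $v\in\dom\Phi$ the bound $\Phi(J_\lambda v)+\frac{1}{2\lambda}\|v-J_\lambda v\|^2\leq\Phi(v)$ combined with an affine minorant $\Phi\geq\langle\zeta,\cdot\rangle+c$ of $\Phi$ yields $\|v-J_\lambda v\|^2\leq2\lambda\big(C+\|\zeta\|\,\|v-J_\lambda v\|\big)$ for a suitable constant $C$, so $J_\lambda v\to v$; then $\Phi_\lambda(v)\geq\Phi(J_\lambda v)$ and lower semicontinuity give $\liminf_{\lambda\downarrow0}\Phi_\lambda(v)\geq\Phi(v)$, whence equality. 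The case $v\notin\dom\Phi$ is handled by the same affine minorant together with the fact that $\|v-J_\lambda v\|$ stays bounded away from $0$ when $v\notin\overline{\dom\Phi}$, and that $\Phi(J_\lambda v)\to+\infty$ by lower semicontinuity otherwise, so that $\Phi_\lambda(v)\to+\infty=\Phi(v)$.

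The steps that need the most care are the passage in item (2) from the one-sided quadratic majorant to genuine (two-sided) G\^ateaux differentiability with the exact gradient, and its upgrade to $C^1$; and the limit argument in item (5), where one must control $J_\lambda v$ and the penalty term for $v$ on, or outside, the closure of $\dom\Phi$. Everything else reduces to monotonicity of $\partial\Phi$ and Minty's surjectivity theorem.
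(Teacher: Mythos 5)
The paper states Proposition \ref{MY} without proof, presenting it as a summary of classical facts about the Moreau--Yosida regularization and referring the reader to \cite{Att00}, \cite{AE}, \cite{BC}, \cite{Br}, \cite{Z}. Your sketch correctly reconstructs the standard arguments found in those references (strong convexity plus Fermat's rule and Minty's theorem for item 1, the one-sided quadratic majorant and the two-sided directional-derivative comparison for item 2, firm nonexpansiveness of $I-J_\lambda$ for item 3, monotonicity of $\partial\Phi$ for item 4, and the affine-minorant control of $J_\lambda v$ for item 5), so it is a sound proof of exactly the material the paper delegates to the literature.
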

 
We are going to adapt to our situation the classical proof of the existence of strong solutions to evolution equations governed by subdifferentials of convex lower semicontinuous functions, see \cite{Br}.
For each $\lambda >0$, we set $f_{i,\lambda} = (f_i)_{\lambda}$ the Moreau-Yosida approximation of index $\lambda$  of $f_i$. We
consider the  Cauchy problem which is obtained by replacing  each $\partial f_i$ by its Yosida approximation $\nabla f_{i,\lambda}$, in (MOG).  
So doing, we are in the situation studied in \cite{AG1}, which treats the case of differentiable objective functions. Precisely, by \cite[Theorem 3.5]{AG1}, for each $\lambda >0$
there  exists of a strong global solution
$u_{\lambda}: [0,+\infty[ \rightarrow \mathcal H$ of the Cauchy problem
\begin{equation}
\label{app1} {\rm\mbox{(MOG)}}_{\lambda} 
\begin{cases}
 \dot u_{\lambda}(t) + \bigg(N_K(u_{\lambda}(t)) + \mbox{{\rm Conv}}\left\{\nabla f_{i,\lambda}(u_{\lambda}(t)) \right\} \bigg)^{0}= 0,\\
u_{\lambda}(0)= u_0.
\end{cases}
\end{equation}
By Lemma \ref{basic-sel} and (\ref{bas204}), there exists $\theta_{i, \lambda} \in L^{\infty}(0, + \infty)$, and ${\eta}_{\lambda} \in  L^{2}(0,T; \mathcal H)$ for all $T>0$,  such that, for almost all $t>0$
\begin{equation}\label{app3}
  {\dot{u}}_{\lambda}(t) +  \sum_{i=1}^{q} \theta_{i, \lambda} (t) \nabla f_{i,\lambda}(u_{\lambda}(t))   + {\eta}_{\lambda} (t)  = 0,
\end{equation}
and 
\begin{equation}\label{app2}
{\eta}_{\lambda} (t) \in  N_K(u_{\lambda}(t)), \   (\theta_{i, \lambda} (t)) \in \S^q. 
\end{equation}

\subsection{Estimations on the sequence $(u_{\lambda})$}

Let us establish bounds for the net $(u_{\lambda})_{\lambda}$, which are independent of $\lambda$.
Let us make a similar argument to that used in Theorem \ref{asymp1}, just replacing $f_i$ by $f_{i,\lambda}$. We obtain
\begin{equation}\label{est1} 
		\int_0^{+\infty} \|{\dot{u}}_{\lambda}(t)\|^2  dt  \leq f_{i,\lambda}(u_0) - {\inf}_{\mathcal H} f_{i,\lambda}.
\end{equation}
Then notice that $f_{i,\lambda}(u_0)  \leq f_{i}(u_0)$, and $\inf_{\mathcal H} f_{i,\lambda} = \inf_{\mathcal H} f_{i}$.  Hence
\begin{equation}\label{est2} 
		\int_0^{+\infty} \|{\dot{u}}_{\lambda}(t)\|^2  dt  \leq f_{i}(u_0)  - \inf_{\mathcal H} f_{i},
\end{equation}
and 
\begin{equation}\label{est3} 
	\sup_{\lambda}	\int_0^{+\infty} \|{\dot{u}}_{\lambda}(t)\|^2  dt  < + \infty.
\end{equation}
From 
\begin{equation}\label{est4} 
	u_{\lambda}(t) = u_0 + \int_0^t  \dot u_{\lambda}(\tau)d\tau,
\end{equation}
and Cauchy-Schwarz inequality, we obtain
\begin{equation}\label{est5} 
\|	u_{\lambda}(t) \| \leq \| u_0 \| + \sqrt{t}\left(\int_0^t  \|\dot u_{\lambda}(\tau)\|^2 d\tau \right)^{\frac{1}{2}}.
\end{equation}
 Combining (\ref{est3})  with (\ref{est5}) we deduce that, for any $T>0$
\begin{equation}\label{est6} 
	\sup_{\lambda}	 {\|{u}_{\lambda}\|}_{L^{\infty}([0,T]; \mathcal H)}   < + \infty.
\end{equation}
Let us now consider the gradients terms $\nabla f_{i,\lambda}(u_{\lambda})$ which appear in (\ref{app3}).
By Proposition \ref{MY}, item 4.,  for any $v\in \mathcal H$,  $\lambda >0$, and $i=1,2,...,q$ 
\begin{equation}\label{est7} 
		 \|\nabla f_{i,\lambda}  (v)\|  \leq \|\left(\partial f_{i}\right)^{0}   (v)\|.
\end{equation}
Combining (\ref{est6}) with (\ref{est7}), and using assumption H1), which tells us that $f_i$  is  a convex continuous function whose subdifferential $\partial f_i$ is bounded on bounded sets, we obtain that, for any $T>0$
\begin{equation}\label{est8} 
	\sup_{\lambda}	 \|\nabla f_{i,\lambda}({u}_{\lambda})\|_{L^{\infty}([0,T]; \mathcal H)}   < + \infty.
\end{equation}

\subsection{Passing to the limit ($\lambda \to 0$)}
As we have already pointed out, the difficulty comes from the discontinuous nature 
of the multivalued operators $\partial f_i$ and $N_K$, and hence of the vector field which governs the differential equation (\ref{mult-steep-desc}).
Indeed, we are going to use  the   monotonicity property of these operators, and the
demiclosedness property (closedness for the strong $\times$ weak product topology) of their graphs in the associated functional spaces.

 By  (\ref{est3}), (\ref{est6}), the generalized sequence $(u_{\lambda})$ is uniformly bounded and equi-continuous on $[0,T]$.  
Since $\mathcal H$ is finite dimensional, we deduce from Ascoli's theorem that, for any $0< T <+\infty$, the generalized sequence $(u_{\lambda})$ is relatively compact for the uniform convergence topology on $[0,T]$.
 Thus, by a diagonal argument (we keep the notation $(u_{\lambda})$ for simplicity), we obtain the existence of $u \in \mathcal C([0,+\infty[ ; \mathcal H)$, 
 and $v_i ,  \eta \in L^{2}_{loc}(0,+\infty ; \mathcal H)$, $\theta_i \in L^{\infty}(0,+ \infty)$ such that, for any $0< T <+\infty$,
\begin{align}
&&&u_{\lambda} \rightarrow u          &&  \mbox{strong}-\mathcal C(0,T ; \mathcal H) & \label{lim01}\\ 
&&& \dot u_{\lambda}  \rightharpoonup \dot{u}      &&\mbox{weak}-L^{2}(0,T ; \mathcal H) & \\
&&&  \nabla f_{i,\lambda}({u}_{\lambda})   \rightharpoonup   v_i        &&     \sigma(L^{\infty}(0,T; \mathcal H), L^{1}(0,T; \mathcal H))  &\\
&&& \theta_{i, \lambda}   \rightharpoonup   \theta_i    \   && \sigma(L^{\infty}(0,T), L^{1}(0,T))& \\
&&&  \eta_{\lambda}   \rightharpoonup   \eta    \ \  && \mbox{weak}-L^{2}(0,T ; \mathcal H). & 
\end{align}
The last statement comes from the following observation: by (\ref{app3})
\begin{equation}\label{lim2}
  {\eta}_{\lambda} (t)  =  -{\dot{u}}_{\lambda}(t) - \sum_{i=1}^{q} \theta_{i, \lambda} (t) \nabla f_{i,\lambda}(u_{\lambda}(t)),
\end{equation}
which implies that the net $({\eta}_{\lambda})$   remains bounded in $L^{2}(0,T ; \mathcal H)$ for any $T>0$.\\
Let us complete this list with the convergence of the net $(f_{i,\lambda}({u}_{\lambda}))$.
\begin{lemma}\label{lim06}
The following convergence result holds: for any $0< T <+\infty$
\begin{equation}\label{lim6}
 f_{i,\lambda}(u_{\lambda})  \rightarrow  f_{i}(u) \ \mbox{uniformly on} \  [0, T] \  \mbox{as}  \  \lambda \rightarrow 0.
\end{equation}
\end{lemma}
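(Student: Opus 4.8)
The plan is to estimate, uniformly in $t\in[0,T]$, the quantity $f_{i,\lambda}(u_\lambda(t))-f_i(u(t))$ by splitting it into a \emph{horizontal} part (moving the base point from $u_\lambda(t)$ to $u(t)$, controlled by the uniform convergence $u_\lambda\to u$) and a \emph{vertical} part (replacing $f_{i,\lambda}$ by $f_i$ at the fixed point $u(t)$, controlled by the standard Moreau--Yosida estimate). The key is that every constant below is uniform in $\lambda$, which is exactly what the a priori bound (\ref{est6}) and assumption H1 provide.

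\noindent\emph{Step 1: a fixed bounded set with uniform constants.} By (\ref{est6}) there is $R_0<\infty$ with $\|u_\lambda(t)\|\le R_0$ for all $\lambda>0$ and $t\in[0,T]$; passing to the limit with (\ref{lim01}) gives $\|u(t)\|\le R_0$ as well. By H1, $\partial f_i$ is bounded on bounded sets, so there is $M<\infty$ with $\|(\partial f_i)^0(v)\|\le M$ for all $v\in\overline{B}(0,R_0+1)$, and $f_i$ is $L$-Lipschitz on $\overline{B}(0,R_0+1)$ for some $L<\infty$. From item 4 of Proposition \ref{MY}, $\|\nabla f_{i,\lambda}(v)\|\le\|(\partial f_i)^0(v)\|\le M$ for $v\in\overline{B}(0,R_0)$; since $f_{i,\lambda}$ is convex and differentiable, it is $M$-Lipschitz on the convex set $\overline{B}(0,R_0)$, with $M$ independent of $\lambda$. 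Moreover, by (\ref{mult-gsd5}), for $\lambda\le 1/M$ and $v\in\overline{B}(0,R_0)$ we have $\|J_\lambda v-v\|=\lambda\|\nabla f_{i,\lambda}(v)\|\le\lambda M\le 1$, hence $J_\lambda v\in\overline{B}(0,R_0+1)$.

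\noindent\emph{Step 2: the two-term estimate.} For $\lambda\le 1/M$ and $t\in[0,T]$ I would write
\[
|f_{i,\lambda}(u_\lambda(t))-f_i(u(t))|\ \le\ |f_{i,\lambda}(u_\lambda(t))-f_{i,\lambda}(u(t))|\ +\ |f_{i,\lambda}(u(t))-f_i(u(t))|.
\]
The first term is at most $M\|u_\lambda(t)-u(t)\|\le M\,\|u_\lambda-u\|_{\mathcal{C}([0,T];\H)}$ by Step 1. For the second, note that $f_{i,\lambda}(u(t))\le f_i(u(t))$ (take $\xi=u(t)$ in the infimum defining $f_{i,\lambda}$), while (\ref{MY1}) gives $f_{i,\lambda}(u(t))=f_i(J_\lambda u(t))+\frac{1}{2\lambda}\|u(t)-J_\lambda u(t)\|^2\ge f_i(J_\lambda u(t))$; therefore
\[
0\ \le\ f_i(u(t))-f_{i,\lambda}(u(t))\ \le\ f_i(u(t))-f_i(J_\lambda u(t))\ \le\ L\,\|u(t)-J_\lambda u(t)\|\ \le\ LM\lambda ,
\]
where I used that $u(t)$ and $J_\lambda u(t)$ both lie in $\overline{B}(0,R_0+1)$ and that $\|u(t)-J_\lambda u(t)\|\le M\lambda$ (Step 1). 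Combining the two bounds yields
\[
\sup_{t\in[0,T]}|f_{i,\lambda}(u_\lambda(t))-f_i(u(t))|\ \le\ M\,\|u_\lambda-u\|_{\mathcal{C}([0,T];\H)}\ +\ LM\lambda ,
\]
and the right-hand side tends to $0$ as $\lambda\to 0$ by (\ref{lim01}); this is precisely uniform convergence on $[0,T]$.

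\noindent\emph{Expected obstacle.} I do not anticipate a real difficulty: the only point that requires care is keeping all constants ($R_0$, $M$, $L$, and above all the Lipschitz constant of $f_{i,\lambda}$) uniform in $t\in[0,T]$ and in $\lambda$ small. The uniform-in-$\lambda$ Lipschitz bound on $f_{i,\lambda}$, obtained from item 4 of Proposition \ref{MY} together with H1, is the one slightly non-obvious ingredient; everything else is bookkeeping with the defining identity (\ref{MY1}) of the Moreau--Yosida approximation.
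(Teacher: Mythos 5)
Your proof is correct. The decomposition is the same as the paper's (the triangle inequality splitting $|f_{i,\lambda}(u_\lambda)-f_i(u)|$ into $|f_{i,\lambda}(u_\lambda)-f_{i,\lambda}(u)|$ plus $|f_{i,\lambda}(u)-f_i(u)|$, with the first term handled identically via item 4 of Proposition \ref{MY}, assumption H1 and (\ref{lim01})), but your treatment of the second term genuinely differs. The paper proves that the net $t\mapsto f_{i,\lambda}(u(t))$ is equicontinuous on $[0,T]$ (using the chain rule and $\dot u\in L^2(0,T;\mathcal H)$) and combines this with the pointwise monotone convergence $f_{i,\lambda}(v)\uparrow f_i(v)$ (item 5 of Proposition \ref{MY}) and an Ascoli-type argument to upgrade pointwise to uniform convergence. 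You instead derive the explicit uniform bound $0\le f_i(u(t))-f_{i,\lambda}(u(t))\le L\,\|u(t)-J_\lambda u(t)\|\le LM\lambda$ from (\ref{MY1}), the identity $\|v-J_\lambda v\|=\lambda\|\nabla f_{i,\lambda}(v)\|$ from (\ref{mult-gsd5}), and the local Lipschitz continuity of $f_i$ (H1). Your route is more elementary and quantitative: it yields a rate $O(\lambda)+O(\|u_\lambda-u\|_{\mathcal C([0,T];\mathcal H)})$, needs neither the absolute continuity of $t\mapsto f_{i,\lambda}(u(t))$ nor item 5 of Proposition \ref{MY}, and uses only the trajectory bound (\ref{est6}) rather than the energy bound on $\dot u$; the paper's argument, by contrast, works directly with the time regularity of the trajectory and would survive in situations where one only controls $u$ along the curve rather than $f_i$ on a whole ball. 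Your one delicate point, the $\lambda$-uniform Lipschitz constant of $f_{i,\lambda}$ on a fixed ball and the inclusion $J_\lambda v\in\overline{B}(0,R_0+1)$ for small $\lambda$, is handled correctly.
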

\begin{proof}
Let us fix $T>0$, and work on the bounded interval $[0,T]$. Let us  write the triangle inequality
\begin{equation}\label{lim7}
 | f_{i,\lambda}(u_{\lambda})  -  f_{i}(u) | \leq | f_{i,\lambda}(u_{\lambda})  -  f_{i,\lambda}(u) |   + | f_{i,\lambda}(u)  -  f_{i}(u) |.
\end{equation}
On the one hand, by the Mean value theorem, (\ref{est7}), and (\ref{lim01}) 
\begin{align}\label{lim8}
| f_{i,\lambda}(u_{\lambda}(t))  -  f_{i,\lambda}(u(t)) |  & \leq  \left( \sup_{\xi \in [u_{\lambda}(t),u(t)]}  \|\nabla f_{i,\lambda}(\xi)\|\right) \|u_{\lambda}(t) - u(t)   \| \\
& \leq  \left( \sup_{\xi \in [u_{\lambda}(t),u(t)]}  \|\left(\partial f_{i}\right)^{0} (\xi)\|\right) \|u_{\lambda}(t) - u(t)   \| \\
 & \leq C \|u_{\lambda}(t) - u(t)   \|,
\end{align}
and hence, 
\begin{equation}\label{lim9}
f_{i,\lambda}(u_{\lambda})  -  f_{i,\lambda}(u) \rightarrow 0 \  \   \ \mbox{uniformly on} \  [0, T] \  \mbox{as}  \  \lambda \rightarrow 0.  
\end{equation}
On the other hand, the net $(f_{i,\lambda}(u))_{\lambda}$  is equi-continuous. This results from the following inequalities
\begin{align}\label{lim90}
|\frac{d}{dt} f_{i,\lambda}(u(t)) |  & = |\left\langle   \nabla f_{i,\lambda}(u(t)), \dot{u} (t)   \right\rangle |\\
& \leq   \|\left(\partial f_{i} (u(t))\right)^{0}\| \|\dot{u} (t) \|   \\
 & \leq C \|\dot{u} (t) \|,
\end{align}
and
\begin{align}\label{lim91}
|f_{i,\lambda}(u(t)) - f_{i,\lambda}(u(s)) |  & \leq  \int_s^t  | \frac{d}{d \tau} f_{i,\lambda}(u(\tau)) | d\tau\\
& \leq   \sqrt{t-s}  \left(\int_0^T  | \frac{d}{d \tau} f_{i,\lambda}(u(\tau)) |^2 d\tau \right)^{\frac{1}{2}} \\
 & \leq  C \sqrt{t-s}  \left(\int_0^T  \|\dot{u} (\tau) \|^2  d\tau \right)^{\frac{1}{2}}. 
\end{align}
Hence, the net $(f_{i,\lambda}(u))_{\lambda}$   is equi-continuous. Since it converges pointwise to   $f_{i}(u)$, 
by Ascoli Theorem, we obtain
\begin{equation}\label{lim10}
f_{i,\lambda}(u)  -  f_{i}(u) \rightarrow 0 \  \  \ \mbox{uniformly on} \  [0, T] \  \mbox{as}  \  \lambda \rightarrow 0.  
\end{equation}
Combining (\ref{lim7}), (\ref{lim9}), (\ref{lim10}), we obtain (\ref{lim6}).
\end{proof}

Technically, the most difficult point is to pass to the limit in (\ref{app3}) on the product of the two weakly converging sequences $(\theta_{i, \lambda})$ and 
$(\nabla f_{i,\lambda}(u_{\lambda}))$. In order to circumvent this difficulty,
   we use a variational argument based on  the convex differential inequality: for any $\xi \in L^{\infty}(0,T ; \mathcal H)$, 
\begin{equation}\label{lim3}
 \sum_{i=1}^{q} \theta_{i, \lambda} (t) f_{i,\lambda} (\xi(t))   \geq    \sum_{i=1}^{q} \theta_{i, \lambda} (t)   f_{i,\lambda}(u_{\lambda}(t))   +    \left\langle     \sum_{i=1}^{q} \theta_{i, \lambda} (t) \nabla f_{i,\lambda}(u_{\lambda}(t)),  \xi(t) -  u_{\lambda}(t)  \right\rangle.
\end{equation}
After integration on  $[0,T]$, we obtain 
\begin{align}
 \int_0^T  \sum_{i=1}^{q} \theta_{i, \lambda} (t) f_{i,\lambda} (\xi(t)) dt  & \geq    \int_0^T   \sum_{i=1}^{q} \theta_{i, \lambda} (t)   f_{i,\lambda}(u_{\lambda}(t)) dt  \label{lim4}\\
 &+  \int_0^T   \left\langle     \sum_{i=1}^{q} \theta_{i, \lambda} (t) \nabla f_{i,\lambda}(u_{\lambda}(t)),  \xi(t) -  u_{\lambda}(t)  \right\rangle dt. \label{lim40}
\end{align}
By  (\ref{app3}),   $\sum_{i=1}^{q} \theta_{i, \lambda} (t) \nabla f_{i,\lambda}(u_{\lambda}(t)) =     -{\dot{u}}_{\lambda}(t) - {\eta}_{\lambda} (t) $. Replacing  in (\ref{lim4})-(\ref{lim40}), we obtain
\begin{align}
 \int_0^T  \sum_{i=1}^{q} \theta_{i, \lambda} (t) f_{i,\lambda} (\xi(t)) dt  & \geq    \int_0^T   \sum_{i=1}^{q} \theta_{i, \lambda} (t)   f_{i,\lambda}(u_{\lambda}(t)) dt  \label{lim5}\\
 &+  \int_0^T   \left\langle   -{\dot{u}}_{\lambda}(t) - {\eta}_{\lambda} (t),  \xi(t) -  u_{\lambda}(t)  \right\rangle dt.\label{lim50}
\end{align}
Since $ f_{i,\lambda} (\xi(t)) \leq  f_{i} (\xi(t))$, and $\theta_{i, \lambda} (t) \geq 0$, we obtain
\begin{align}
 \int_0^T  \sum_{i=1}^{q} \theta_{i, \lambda} (t) f_{i} (\xi(t)) dt  & \geq    \int_0^T   \sum_{i=1}^{q} \theta_{i, \lambda} (t)   f_{i,\lambda}(u_{\lambda}(t)) dt  \label{lim51}\\
 &+  \int_0^T   \left\langle   -{\dot{u}}_{\lambda}(t) - {\eta}_{\lambda} (t),  \xi(t) -  u_{\lambda}(t)  \right\rangle dt.\label{lim501}
\end{align}
 For any $\xi \in L^{\infty}(0,T ; \mathcal H)$, since $f_i$ is continuous and bounded on bounded sets (assumption H1)), we have    $f_{i} (\xi(\cdot))  \in L^{\infty}(0,T)$. Moreover $\theta_{i, \lambda}   \rightharpoonup   \theta_i$    for the topology  $\sigma(L^{\infty}(0,T), L^{1}(0,T))$. Therefore, by passing to the limit on the left member of (\ref{lim51}), we obtain
$$
\lim_{\lambda} \int_0^T  \sum_{i=1}^{q} \theta_{i, \lambda} (t) f_{i} (\xi(t)) dt = \int_0^T  \sum_{i=1}^{q} \theta_{i} (t) f_{i} (\xi(t)) dt.
$$
\noindent Let us now pass to the limit on the right member of (\ref{lim51})-(\ref{lim501}). For the first term,
we use Lemma \ref{lim06}.  For the second term, we notice that this expression involves  duality products of nets which are  respectively converging for the strong and weak topologies of a duality pairing. 
More precisely $ {\dot{u}}_{\lambda} + {\eta}_{\lambda}$ converges $\mbox{weakly in }L^{2}(0,T ; \mathcal H)$ to $\dot{u} + \eta$, and  $\xi -  u_{\lambda} $ converges uniformly, and hence 
$\mbox{strongly in } L^{2}(0,T ; \mathcal H)$ to $\xi -  u$.
Hence, by passing to the limit as $\lambda $ goes to zero, we obtain
\begin{align}\label{lim11}
 \int_0^T  \sum_{i=1}^{q} \theta_i (t) f_i (\xi(t)) dt  &\geq    \int_0^T   \sum_{i=1}^{q} \theta_i (t)   f_i (u(t)) dt \\ 
 &+  \int_0^T   \left\langle   -\dot{u}(t) - \eta (t),  \xi(t) -  u(t)  \right\rangle dt. \nonumber
\end{align}
Let us interpret this inequality in the duality pairing bewteen  the functional spaces $L^{\infty}(0,T ; \mathcal H)$ and $L^{1}(0,T ; \mathcal H) \subset \left(L^{\infty}(0,T ; \mathcal H)\right)^{*}$. For this, introduce
$I$,  the integral functional on  $L^{\infty}(0,T ; \mathcal H)$ which is defined by
\begin{equation}\label{int1}
 I (\xi ) = \int_0^T  \sum_{i=1}^{q} \theta_i (t) f_i (\xi(t)) dt.
\end{equation}
We observe that $I: L^{\infty}(0,T ; \mathcal H) \rightarrow \mathbb R $ is  convex and continuous on $L^{\infty}(0,T ; \mathcal H)$. Hence, inequality (\ref{lim11}) can be rewritten as
\begin{equation}\label{Rock+}
-\dot{u} - \eta  \in \partial I (u).
\end{equation}
According to the duality theorem of Rockafellar  for convex functional integrals, see \cite[Theorem 4]{Rock}, for almost all $t>0$
 \begin{align}\label{lim141}
 -\dot{u}(t) - \eta (t)  &\in  \partial \left(\sum_{i=1}^{q} \theta_i (t) f_i \right)(u(t))  \\
 & =  \sum_{i=1}^{q} \partial \left(\theta_i (t) f_i \right)(u(t)),
\end{align}
where the last equality comes from the additivity rule for the subdifferential of the sum of convex continuous functions on ${\mathbb R}^q$.
Indeed we need to prove a slighter more precise result: 
$$
-\dot{u}(t) - \eta (t) = \sum_{i=1}^{q}  \theta_i (t) v_i (t)
$$ 
with \textit{measurable} functions $v_i \in L^{\infty}(0,T; \mathcal H)$ such that
%$$z_i(t) \in \partial \left(\theta_i (t) f_i \right)(u(t)).$$
\begin{equation}\label{Rock++}
v_i(t) \in  \partial  f_i (u(t)) \ \text{ for almost all } t>0.
\end{equation}
This can be proved by a precise analysis of the duality theorem from \cite{Rock}. Since it is quite technical, the proof is stated in Lemma \ref{L:Rock}, at the end of this section.  
Assuming this result, we obtain by combination with (\ref{Rock+}) that 
 \begin{equation}\label{lim14}
\dot u(t) +  \eta (t) + \sum_i \theta_i (t) v_i(t)   = 0 \quad \mbox{for almost all} \ t>0,
\end{equation}
with
 \begin{align}
 & \theta_i \in L^{\infty}(0,+\infty; \R), \ v_i\in L^{\infty}(0,T; \mathcal H), \ \eta \in L^2(0,T; \mathcal H), \ \mbox{for all} \ T>0,   \  \mbox{and all} \  i=1,2,..., q; \\
 &  (\theta_i(t))\in \S^q \ \text{and} \  v_i(t) \in \partial f_i(u(t)) \quad \mbox{for almost all} \ t>0;  \label{lim112}
 \end{align}
On the other hand, from $u_{\lambda} \rightarrow u     \   \mbox{strong}-\mathcal C(0,T ; \mathcal H) $, 
$\eta_{\lambda}   \rightharpoonup   \eta     \  \  \mbox{weak}-L^{2}(0,T ; \mathcal H)$, \ ${\eta}_{\lambda} (t) \in  N_K(u_{\lambda}(t))$, and from the demi-closedness property of the extension
to $L^2(0,T; \mathcal H)$ of the maximal monotone 
normal cone mapping ($N_K$ is the subdifferential of the indicator function fo $K$), we obtain 
 \begin{equation}\label{lim15}
\eta (t) \in N_K (u(t)).
\end{equation}
Thus
\begin{equation}\label{lim17}
 \dot u(t) + N_K(u(t)) + \mbox{Conv}\left\{\partial f_i(u(t))\right\} \ni 0.
\end{equation}

\subsection{Lazy solution}

Let us complete the proof of Theorem \ref {basic-exist} by showing that $u$ is a lazy solution of the differential inclusion  (\ref{lim17}).
Let us start from the lazy solution property satisfied by the approximate solutions $u_{\lambda}$
\begin{equation}\label{sl1}
 -\dot u_{\lambda}(t) = \bigg(N_K(u_{\lambda}(t)) + \mbox{{\rm Conv}}\left\{\nabla f_{i,\lambda}(u_{\lambda}(t)) \right\} \bigg)^{0}.
\end{equation}
By the obtuse angle property, since $0 \in N_K(u_{\lambda}(t))$  we have 
\begin{equation}\label{sl2}
\left\langle  \dot u_{\lambda}(t), \dot u_{\lambda}(t)  +  \sum_{i=1}^{q} \theta_{i}(t) \nabla f_{i,\lambda} (u_{\lambda}(t)) \right\rangle  \leq 0,
\end{equation}
for all $\theta_{i}  \in L^{\infty}(0,+\infty), \  i=1,2,...,q $ that satisfy $(\theta_i(t))\in \S^q$.
After developing, and using the classical derivation chain rule, we obtain
\begin{equation}\label{sl4}
\| \dot u_{\lambda}(t)\|^2 +  \sum_{i=1}^{q}  \theta_{i}(t) \frac{d}{dt} f_{i,\lambda} (u_{\lambda}(t)) \leq 0.
\end{equation}
In order to pass to the limit on (\ref{sl4}), 
take  $\alpha$ a  nonnegative test function (a function of $t$ which is regular, and with compact support in $]0,  T[$). After multiplication of (\ref{sl4}) by $\alpha$, and integration on 
$[0,T]$, we obtain 
\begin{equation}\label{sl5}
\int_0^T \alpha (t)\| \dot u_{\lambda}(t)\|^2 dt + 
\sum_{i=1}^{q}  \int_0^T \alpha (t) \theta_{i}(t) \frac{d}{dt} f_{i,\lambda} (u_{\lambda}(t)) dt \leq 0.
\end{equation}
The convex function $ v \mapsto \int_0^T \alpha (t)\| v(t)\|^2 dt$ is continuous on $L^{2}(0,T ; \mathcal H)$, and hence lower semicontinuous
 for the weak topology of $L^{2}(0,T ; \mathcal H)$. Since $\dot u_{\lambda}  \rightharpoonup \dot{u}$  weakly in $L^{2}(0,T ; \mathcal H)$, we have
\begin{equation}\label{sl6}
\int_0^T \alpha (t)\| \dot u(t)\|^2 dt \leq  \mbox{{\rm lim inf}} \int_0^T \alpha (t)\| \dot u_{\lambda}(t)\|^2 dt.
\end{equation}
In order to pass to the limit on the second term of (\ref{sl5}), we use a density argument. First assume that the $\theta_{i}$  are Lipschitz continuous on bounded sets.
Since $f_{i,\lambda} (u_{\lambda})$ and $\alpha \theta_{i}$ are absolutely continuous functions of a real variable, their product is still absolutely continuous 
(see \cite[Corollary VIII.9]{Br2}), and integration by part formula is valid. Hence
\begin{equation}\label{sl7}
\sum_{i=1}^{q}  \int_0^T \alpha (t) \theta_{i}(t) \frac{d}{dt} f_{i,\lambda} (u_{\lambda}(t)) dt = - \sum_{i=1}^{q}  \int_0^T \frac{d}{dt}(\alpha \theta_{i})(t)  f_{i,\lambda}(u_{\lambda}(t)) dt.
\end{equation}
By  Lemma \ref{lim06}, 
$$
 f_{i,\lambda}(u_{\lambda})  \rightarrow  f_{i}(u) \ \mbox{uniformly on} \  [0, T], \  \mbox{as}  \  \lambda \rightarrow 0.
$$
Moreover $\frac{d}{dt}(\alpha  \theta_{i}) \in L^{\infty}(0,T; \R)$. Thus, as $\lambda \rightarrow 0$
\begin{equation}\label{sl70}
\sum_{i=1}^{q}  \int_0^T \frac{d}{dt}(\alpha \theta_{i})(t)  f_{i,\lambda}(u_{\lambda}(t)) dt  \rightarrow   \sum_{i=1}^{q}  \int_0^T \frac{d}{dt}(\alpha \theta_{i})(t)  f_{i} (u(t)) dt.
\end{equation}
Since $f_{i} (u)$ is absolutely continuous, using again integration by part formula
\begin{equation}\label{sl71}
- \sum_{i=1}^{q}  \int_0^T \frac{d}{dt}(\alpha \theta_{i})(t)  f_{i} (u(t)) dt =  \sum_{i=1}^{q}  \int_0^T \alpha (t) \theta_{i}(t) \frac{d}{dt} f_i (u(t)) dt.
\end{equation}
From (\ref{sl7}), (\ref{sl70}), and  (\ref{sl71}) we obtain
\begin{equation}\label{sl72}
\sum_{i=1}^{q}  \int_0^T \alpha (t) \theta_{i}(t) \frac{d}{dt} f_{i,\lambda} (u_{\lambda}(t)) dt \rightarrow \sum_{i=1}^{q}  \int_0^T \alpha (t) \theta_{i}(t) \frac{d}{dt} f_i (u(t)) dt.
\end{equation}
Combining (\ref{sl5}), (\ref{sl6}), and (\ref{sl72}) we obtain, for $\theta_{i} $ that satisfy $(\theta_i(t))\in \S^q$,
and are Lipschitz continuous on bounded sets,
\begin{equation}\label{sl8}
 \int_0^T \alpha (t)\| \dot u(t)\|^2 dt   +   \sum_{i=1}^{q}  \int_0^T \alpha (t) \theta_{i}(t) \frac{d}{dt} f_i (u(t)) dt \leq 0.
\end{equation}
Let us show that, by density, (\ref{sl8}) can be extended to arbitrary $\theta_i \in L^{\infty}(0,+ \infty; \R)$ that satisfy $(\theta_i(t)) \in \S^q$. Given such functions $(\theta_i)_{i=1,...,q}$,
by  regularization by convolution,   
we can find a sequence of regular functions $\theta_{i,n} \in {\mathcal C} ^{\infty}(0, + \infty) $,  such that 
\begin{equation}\label{sl80}
\theta_{i,n} \rightarrow \theta_{i} \quad a.e \ \  t  \in (0,+ \infty) \ \text{when } n \text{ goes to } +\infty.
\end{equation}
Let $T:  \mathbb R^q \rightarrow \mathbb R^q$ be the projection onto the unit simplex $\S^q \subset \mathbb R^q$. $T$ is a nonexpansive mapping. By  (\ref{sl80}), and 
$\theta (t)  =(\theta_{i}(t))_i \in \S^q \ \mbox{for almost all} \ t>0$, we see  that  $T\circ \theta_{n}$ is Lipschitz continuous on any interval $[0, T]$,
and satisfies, for almost all $ t>0$
 \begin{align}
 &     T\circ \theta_{n} (t) \in \S^q,\\
 &    (T\circ \theta_{n})_i (t) \rightarrow \theta_{i}(t) \quad \mbox{for almost all} \ t>0. \label{Fatou}
 \end{align}
  By (\ref{sl8}), for each $n \in \mathbb N$, we have
\begin{equation}\label{sl91}
\int_0^T \alpha (t)\| \dot u(t)\|^2 dt + \sum_{i=1}^{q}  \int_0^T \alpha (t) (T\circ \theta_{n})_i(t) \frac{d}{dt} f_{i} (u(t)) dt \leq 0.
\end{equation}
On the other hand, by Lemma \ref{lemma_Brezis}, for any $\xi\in L^2(0, T; H)$ such that $\xi(t)\in \partial f_i(u(t))$ (there exists such elements, for example take  $v_i$ 
obtained in  (\ref{lim112})), we have  
\begin{equation}\label{sl90}
 \frac{d}{dt}  f_i (u(t))=\langle \dot{u}(t), \xi (t)\rangle,
\end{equation}
and $\frac{d}{dt} f_{i} (u)$ is  integrable on $[0,T]$ ($t\mapsto f_i(u(t))$ is absolutely continuous on $[0, T]$). 
From  (\ref{Fatou}), by applying  Fatou's lemma, (note that $\frac{d}{dt} f_{i} (u)\in L^1([0,T])$, which allows us to reduce to the case of non-negative functions), we obtain
\begin{equation}\label{sl92}
\sum_{i=1}^{q}  \int_0^T \alpha (t) \theta_{i}(t) \frac{d}{dt} f_{i} (u(t)) dt \leq  \liminf_n  \sum_{i=1}^{q}  \int_0^T \alpha (t) (T\circ \theta_{n})_i(t)  \frac{d}{dt} f_{i} (u(t)) dt.
\end{equation}
From (\ref{sl91}) and  (\ref{sl92}) we deduce that
\begin{equation}\label{sl93}
\int_0^T \alpha (t)\| \dot u(t)\|^2 dt + \sum_{i=1}^{q}  \int_0^T \alpha (t) \theta_{i}(t) \frac{d}{dt} f_{i} (u(t)) dt \leq 0.
\end{equation}
Since $\alpha$ is an arbitrary  positive test function, we deduce from (\ref{sl93}) that
\begin{equation}\label{sl9}
\| \dot u(t)\|^2 +  \sum_{i=1}^{q}  \theta_{i}(t) \frac{d}{dt} f_{i} (u(t)) dt \leq 0.
\end{equation}
Take arbitrary 
$\eta \in L^2(0,T; \mathcal H), \ \xi_i \in L^2(0,T; \mathcal H) \ i=1,..., q$,  such that
$ \eta(t) \in N_K(u(t)), \  \xi_i(t) \in \partial f_i(u(t)) \ \mbox{for almost all} \ t>0$.
Since $u(t) \in  K$, we have $\dot{u}(t) \in T_K (u(t))$, and since  $ \eta(t) \in N_K(u(t))$
 \begin{equation}\label{sl10}
\left\langle  \dot{u}(t),   \eta(t) \right\rangle \leq 0.
\end{equation}
Combining (\ref{sl90}), (\ref{sl9}), and (\ref{sl10}) we obtain
\begin{equation}\label{sl11}
\| \dot u(t)\|^2 +   \left\langle  \dot{u}(t),   \eta(t) \right\rangle   +  \sum_{i=1}^{q}  \theta_{i}(t) \left\langle  \dot{u}(t),   \xi_i(t) \right\rangle  \leq 0.
\end{equation}
Equivalently, for any $z(t) \in N_K(u(t)) + \mbox{Conv}\left\{\partial f_i(u(t))\right\} $
\begin{equation}\label{sl12}
 \left\langle 0 -  (-\dot{u}(t)),   z(t)  - \dot{u}(t)   \right\rangle  \leq 0.
\end{equation}
Combining this property with (\ref{lim17}) we  obtain
$$
\dot{u}(t) + \bigg( N_K(u(t)) +    \mbox{{\rm Conv}}\left\{ \partial f_i(u(t)) \right\}   \bigg)^{0}     = 0 \quad \mbox{for almost all} \ t>0,
$$
which ends the proof.

%%%%%%%%%%%%%%%%%%%%%%%%%%%%%%%%%%%%%%%%%%
%%%%%%%%%%%%%%%%%%%%%%%%%%%%%%%%%%%%%%%%%%%
%%%%%%%%%%%%%%%%%%%%%%%%%%%%%%%%%%

 \begin{lemma}\label{L:Rock}
Let $I : L^{\infty}(0,T ; \mathcal H) \longrightarrow \R$ be defined by $I (\xi ) = \int_0^T  \sum_{i=1}^{q} \theta_i (t) f_i (\xi(t)) dt$, with 
 $\theta_i \in L^{\infty}(0,T ; \mathcal H)$ for $i=1,...,q$, and $(\theta_i(t))\in \S^q$ for almost all $t>0$. 
 Let $\ \xi \in L^{\infty}(0,T ; \mathcal H)$ and $z \in L^1(0,T ; \mathcal H)$ such that $z \in \partial I( \xi)$. Then for all $i=1,...,q$ there exists $v_i \in L^{\infty}(0,T ; \mathcal H)$  such that 
\begin{equation*}
\text{for almost all } t>0, \ v_i(t) \in \partial f_i (\xi (t)), \text{  and } \ z(t)={\sum\limits_{i=1}^{q} \theta_i(t) v_i(t)}.
\end{equation*}
\end{lemma}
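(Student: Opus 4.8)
The plan is to turn the statement into a measurable--selection problem. First I would apply Rockafellar's duality theorem for convex functional integrals \cite[Theorem 4]{Rock} to the normal convex integrand $g(t,x)=\sum_{i=1}^{q}\theta_i(t)f_i(x)$: since $I$ is finite, convex and continuous on $L^{\infty}(0,T;\mathcal H)$ (the $f_i$ being continuous and bounded on bounded sets) and since $z\in L^{1}(0,T;\mathcal H)$ is identified with an element of $(L^{\infty})^{*}$ having no singular part, the theorem gives $z(t)\in\partial g(t,\xi(t))$ for almost all $t\in(0,T)$. By the Moreau--Rockafellar additivity rule, legitimate because the $f_i$ are finite and continuous, $\partial g(t,\xi(t))=\sum_{i=1}^{q}\theta_i(t)\,\partial f_i(\xi(t))$ for a.e.\ $t$. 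Hence
$$\Gamma(t):=\Big\{(v_1,\dots,v_q)\in\mathcal H^{q}\ :\ v_i\in\partial f_i(\xi(t))\ \ (i=1,\dots,q),\ \ \textstyle\sum_{i=1}^{q}\theta_i(t)v_i=z(t)\Big\}$$
is a nonempty closed convex subset of $\mathcal H^{q}$ for almost every $t$, and it remains to extract a \emph{measurable} selection of $\Gamma$ and to check its essential boundedness.

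For the measurable selection I would argue exactly as in the proof of Lemma \ref{basic-sel}, writing $\Gamma$ as the argmin of a normal integrand. Put
$$j(t,v)=\sum_{i=1}^{q}\delta_{\partial f_i(\xi(t))}(v_i)+\Big\|\,z(t)-\sum_{i=1}^{q}\theta_i(t)v_i\,\Big\|^{2},\qquad v=(v_1,\dots,v_q)\in\mathcal H^{q}.$$
Since $\Gamma(t)\neq\emptyset$ a.e., the minimal value of $j(t,\cdot)$ is $0$ and is attained precisely on $\Gamma(t)$, so $\Gamma(t)=\mbox{argmin}\,j(t,\cdot)$ a.e. The function $j$ is a positive convex normal integrand provided $t\mapsto\partial f_i(\xi(t))$ is a measurable closed-valued multifunction; this I would verify by fixing a countable dense set $D\subset\mathcal H$ and using the representation
$$\partial f_i(\xi(t))=\bigcap_{y\in D}\big\{\,v\in\mathcal H\ :\ \langle v,\,y-\xi(t)\rangle\le f_i(y)-f_i(\xi(t))\,\big\},$$
each half-space on the right depending measurably on $t$ because $\xi$ is measurable and $f_i$ continuous, and a countable intersection of measurable closed-valued multifunctions being again measurable. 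Then \cite[Corollary 14.6; Theorem 14.37]{RW} provide a measurable selection $t\mapsto(v_1(t),\dots,v_q(t))\in\Gamma(t)$, that is, measurable functions $v_i$ with $v_i(t)\in\partial f_i(\xi(t))$ and $\sum_{i=1}^{q}\theta_i(t)v_i(t)=z(t)$ for almost all $t$.

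Finally, because $\xi\in L^{\infty}(0,T;\mathcal H)$ the values $\xi(t)$ remain in a fixed bounded set $B$, on which $\partial f_i$ is bounded by assumption H1); hence $\mathrm{ess\,sup}_{t}\,\|v_i(t)\|<+\infty$ and $v_i\in L^{\infty}(0,T;\mathcal H)$, which is all that is claimed. I expect the only genuinely delicate point to be this passage from the pointwise, non-canonical decomposition $z(t)=\sum_{i}\theta_i(t)v_i(t)$ to a measurable choice of the $v_i$: once $t\mapsto\partial f_i(\xi(t))$ (and thus $\Gamma$) is recognized as a measurable multifunction, the standard selection machinery closes the gap just as in Lemma \ref{basic-sel}. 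A minor point to handle along the way is the case $\theta_i(t)=0$, where $\partial(\theta_i(t)f_i)(x)=\{0\}=\theta_i(t)\partial f_i(x)$ since $\partial f_i(x)\neq\emptyset$, so neither the additivity rule nor the definition of $\Gamma(t)$ is affected.
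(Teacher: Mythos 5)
Your proof is correct, and it reaches the conclusion by a route that is recognizably different from the paper's. The paper never invokes the pointwise sum rule: it starts from the Fenchel extremality relation $I(\xi)+I^{*}(z)-\langle \xi,z\rangle=0$, computes $I^{*}(z)=\int_0^T\bigl(\sum_{i}\theta_i(t)f_i\bigr)^{*}(z(t))\,dt$ via \cite[Theorem 2]{Rock}, writes the conjugate of the sum as an exact infimal convolution
$\bigl(\sum_{i}\theta_i(t)f_i\bigr)^{*}(z(t))=\min\bigl\{\sum_{i}(\theta_i(t)f_i)^{*}(z_i):\ \sum_i z_i=z(t)\bigr\}$,
selects the minimizing $z_i(t)$ measurably by the same argmin-of-a-normal-integrand device you use, and then reads off $z_i(t)\in\partial(\theta_i(t)f_i)(\xi(t))$ from the fact that the integrand in the extremality relation is a sum of nonnegative terms, each of which must therefore vanish a.e.; only at the very end does it divide by $\theta_i(t)$ where positive. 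You instead first obtain the pointwise inclusion $z(t)\in\partial\bigl(\sum_i\theta_i(t)f_i\bigr)(\xi(t))$ from \cite[Theorem 4]{Rock} (legitimate since $z\in L^{1}$ contributes no singular part), apply the Moreau--Rockafellar rule pointwise, and then make a single measurable selection from the decomposition set $\Gamma(t)$. These are dual formulations of the same splitting, and both hinge on the same selection machinery from \cite{RW}. What your version buys is a clean isolation of the one genuinely delicate point, namely the measurability of the closed-valued multifunction $t\mapsto\partial f_i(\xi(t))$, which you verify correctly via a countable dense family of half-spaces (valid in the finite-dimensional setting of Section 3, which is also where the cited selection theorems apply); what the paper's version buys is that the splitting of $z(t)$ into the $z_i(t)$ comes for free from the exactness of the inf-convolution, at the price of a slightly more roundabout passage through the conjugates. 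Your handling of the degenerate case $\theta_i(t)=0$ and of the $L^{\infty}$ bound via H1) coincides with the paper's.
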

\begin{proof}

By the Fenchel extremality relation,
 \begin{equation}\label{dual-1}
z \in \partial I (\xi) \Leftrightarrow I(\xi) + I^{*}(z) -\left\langle  \xi,z    \right\rangle_{(L^{\infty}(0,T ; \mathcal H), L^{1}(0,T ; \mathcal H))} = 0.
 \end{equation}
By \cite[Theorem 2]{Rock}, we have
$$
I^{*}(z)=   \int_0^T   \left(\sum_{i=1}^{q} \theta_i (t) f_i \right)^{*} (z(t)) dt.
$$
Let us analyze this last expression. Since the $f_i$ are convex continuous functions, their conjugate are coercive functions, and 
$$
\left(\sum_{i=1}^{q} \theta_i (t) f_i \right)^{*} (z(t))= \min \left(\sum_{i=1}^{q} \left(\theta_i (t) f_i\right)^{*} (z_i): \quad \sum_i z_i= z(t) \right).
$$
The same measurable selection argument as the one used  in Lemma \ref{basic-sel} gives the existence of measurable functions $z_i (\cdot)$ such that
\begin{equation}\label{dual-1.5}
\left(\sum_{i=1}^{q} \theta_i (t) f_i \right)^{*} (z(t))=  \sum_{i=1}^{q} \left(\theta_i (t) f_i\right)^{*} (z_i (t)) \quad \mbox{with}  \  \sum_i z_i (t)= z(t) .
\end{equation}
Returning to (\ref{dual-1}) we obtain 
 \begin{align}\label{dual-2}
z \in \partial I (\xi) &\Leftrightarrow & \int_0^T  \sum_{i=1}^{q} (\theta_i (t) f_i (\xi(t)) + \left(\theta_i (t) f_i\right)^{*} (z_i (t)))dt -\int_0^T \left\langle \xi(t),z(t) \right\rangle dt = 0\\
&\Leftrightarrow & \int_0^T  \sum_{i=1}^{q} \left(\theta_i (t) f_i \right)(\xi(t)) + \left(\theta_i (t) f_i\right)^{*} (z_i (t)) - \left\langle \xi(t),z_i(t)   \right\rangle dt = 0.\nonumber
 \end{align}
Since each of the elements of this last sum expression is nonnegative, we deduce that, for each $i=1,2,...,q$, and for almost all $t>0$
$$
\theta_i (t)f_i (\xi(t)) + \left(\theta_i (t) f_i\right)^{*} (z_i (t)) - \left\langle \xi(t),z_i(t) \right\rangle =0.
$$
Equivalently $z_i (t)  \in \partial \left(\theta_i (t) f_i \right)(\xi(t))$. Let us now verify that 
$$\partial \left(\theta_i (t) f_i \right)(u(t))= \theta_i (t)\partial  f_i (u(t)).$$
Take some $\tilde{z_i} \in L^{\infty}(0,T ; \mathcal H)$ such that $\tilde{z_i}(t) \in \partial f_i (\xi(t)) \quad \mbox{for almost all} \ t>0$, 
(there exists such element, take for example $\tilde{z_i}(t) =(\partial f_i)^{0} (\xi(t))$). 
%Set $z_i(t) \in \partial \left(\theta_i (t) f_i \right)(\xi(t))$.
We  have
 \begin{equation}\label{lim142}
z_i(t)  = \theta_i (t) v_i(t) \quad \mbox{for almost all} \ t>0, 
\end{equation}
where
\begin{equation}
\label{lim143} v_i (t)= \
\begin{cases}
 \frac{z(t)}{\theta_i (t)} \quad \mbox{if  } \theta_i (t) >0, \\
\tilde{z_i}(t) \quad \mbox{if  } \theta_i (t) =0.
\end{cases}
\end{equation}
Moreover $v_i$ is measurable, and $v_i (t) \in \partial  f_i(\xi(t)) \ \mbox{for almost all} \ t>0$. By continuity of  $f_i$, we conclude that
$v_i \in L^{\infty}(0,T ; \mathcal H)$. 
\end{proof}

 \section{Some modeling and numerical aspects, perspectives}\label{mod}

  \subsection{Cooperative games}\label{game}
% In this section we  extend to the nonsmooth setting the game theoretical analysis which was developed in \cite{AG1} in the smooth case. There is no major change.
 
 In this section, we consider some modeling aspects concerning the multiobjective steepest descent for cooperative games.
  This completes  \cite{AG1}, where was considered the smooth case. 
  Indeed, for applications, it is quite useful to consider objective functions which are not differentiable
  (like the $\| \cdot \|_1$ norm  for sparse optimization). 
  
 Let us consider $q$ agents (consumers, social actors, deciders,...). The agent $i$ acts on a decision space $\mathcal H_i$, and takes decision $v_i \in \mathcal H_i$, \ $i=1,2,...,q$. 
 Let $K$ be a given closed subset of $\mathcal H  =  \mathcal H_1 \times \mathcal H_2 \times...\times \mathcal H_q $, which reflects the limitation of ressources, and/or various constraints. 
 Feasible decisions $v \in \mathcal H$ satisfy
 $$
 v= (v_1,v_2,...,v_q) \in   K.
 $$ 
 Each  agent $i$ has a disutility (loss) function  $f_i: \mathcal H \rightarrow \mathbb R$ which associates to each feasible decision $v \in  K$ the scalar $f_i(v)$.
The game in normal form is given by the triplet $(\mathcal H,  K , (f_i)_{i=1,...q} )$.
The (MOG) dynamic has been designed in order to satisfy some desirable properties with respect to Pareto equilibration: each trajectory $t \mapsto u(t)$  of (MOG) satisfies

i) for each $i=1,2,...,q$,  \   $t \mapsto f_i(u(t))$ is  nonincreasing (Theorem \ref{asymp1}, item i)); 

ii)  $u(t)$ converges to a Pareto critical point as $t\rightarrow + \infty$ (Theorem \ref{asymp1}, item iii));
 
Let us make some futher observations:
 
iii)  In (MOG) dynamic there is no a priori or a posteriori  scalarization of the original vector optimization
problem. Neither ordering information nor weighting factors for the
different objective functions are assumed to be known.
The scalarization is done dynamically,  endogenously ((MOG) is an autonomous dynamical system). Taking into account  the worst directional derivative (indeed, in view of minimization, it is  the greatest), 
can make progress all agents, and  gives to (MOG) system robustness  (minimization in the worst case), and good convergence properties.
When it is no longer possible to make progress all the agents, the process stops at a weak Pareto optimal point.
It is a natural question  whether it is possible to reach a Pareto optimum. Indeed, it depends on the willingness of the agents to cooperate more or less.
After reaching a weak Pareto optimum, a natural way is to consider the  coalition  involving agents that can further enhance their performance.
Then we can consider the (MOG) dynamics involving these  agents. An additional constraint must be added which states that the performance of the agents who stay at rest is not damaged.

iv) The choice of the metric on the space $\mathcal H$ plays a fundamental role in the definition of  the gradient-like system (MOG).
The metric reflects  the friction and inertia that are attached to the changes in dynamical decision processes,
 see  \cite{ABRS}, \cite{ASo}, for an account on the notion of costs to change 
(changing a routine...).
The definition of (MOG) involves local notions (subdifferentials of the $f_i$, and tangent cone to $K$) which corresponds to the modeling of myopic agents.

v) A central question in Pareto optimization is obtaining a Pareto optimum with desirable properties. 
A major advantage of the dynamic gradient approach is that we don't need to know the whole Pareto front.
The weak Pareto equilibrium finally reached is not too far from the starting point of the dynamics (see Figures 1 and 2), making the process realistic in engineering and human sciences.
Moreover, one can select a Pareto optimum which is not too far from a desirable state $u_d$ by using an auxiliary asymptotic hierarchical procedure (see \cite{ACz} and references therein).
For example, according to the method of Tikhonov regularization, we can consider $\epsilon (t) \to 0$ as $t\to + \infty$, with $\int_0^{\infty} \epsilon (t) dt = + \infty  $, and the following  dynamics
\begin{center}
	$\dot u(t) + \bigg(N_K(u(t)) + \mbox{Conv}\left\{\partial f_i(u(t)) + \epsilon (t) (u(t) - u_d)\right\} \bigg)^{0}  = 0$.
\end{center}
It is a (time)-multiscaled nonautonomous dynamic, an interesting subject for further research.

vi) Hybrid methods combine gradient methods (fast, with low computational cost, but local) with evolutionary computation methods (global, but with high computational cost).
They  have proved to be efficient for the minimization of a single objective function.  
It would be interesting to develop the same type of idea in order to reach the whole Pareto set,  see  \cite{Bos}, \cite{BrS} for some first results in this direction. 

\subsection{Inverse problems} 

As a model situation, let us consider the  computation of sparse solutions for underdetermined systems of equations. It  is an important problem in signal compression and statistics (see \cite{Don,Tib}). It leads to the following nonsmooth convex minimization problem
$$\min\{   \|  Ax - b \|^2_2    + \alpha    \|x\|_1 :   \quad  x\in K \subset \R^n \}$$
where  $ \|  Ax - b \|^2_2  $ is a  least squares data fitting term, and $ \|x\|_1$ forces sparsity.
There is numerical evidence that a careful weighting of these two terms is important for the effectiveness of the method. Usually it is done by experimental trials.
It would be of great interest to develop a numerical method based on a multiobjective optimization approach (with $f_1 (x) =  \|  Ax - b \|^2_2  $ and $f_2 (x)= \|x\|_1 $), where the weighting is done automatically, while giving more weight to the lower term. 
Indeed, this is what the (MOG) dynamic does. 
% Note that replacing the $1$-norm $\Vert \cdot \Vert_1$ by a strictly convex function (like $p$-norm $\Vert \cdot \Vert_p$ for $p > 1$, or $ \|x\|_1 + \epsilon \|x\|_2^2$), would enforce the convergence to a Pareto optimum instead of a weak Pareto optimal point.

 \noindent All these considerations  naturally lead us to consider discretized, algorithmic  versions of the method.

 \subsection {Numerical  descent methods for nonsmooth multiobjective optimization}\label{num}
In the unconstrained case, an explicit discretization of (MOG) provides an algorithm of the form:
\begin{equation}\label{Algo1}
\text{At step } k, \text{ compute } \ u_{k+1} = u_k + \lambda_k d_k,
\end{equation}
where $d_k=s(u_k)$ is the multiobjective steepest descent direction at $u_k$,  and $\lambda_k$ is some nonnegative steplength.
If we have a constraint $K$, we can approach $s(u_k)$  by replacing the tangent cone $T_K(u_k)$ with $\frac{C- u_k}{\mu_k}$ (for some small $\mu_k$) in (\ref{D:SecondForm}). This leads to  :
\begin{align}
 & u_{k+1}=u_k + \lambda_k d_k, \label{Algo2} \\
& \text{where } \ d_k = \underset{d \in C- u_k}{\mbox{argmin}} \left\{ \frac{1}{2 \mu_k} \Vert d \Vert^2 + \max\limits_{i=1,...,q} \max_{p_i\in  \partial f_i(u)}  \left\langle  p_i, d \right\rangle \right\} \label{Algo2bis}.
\end{align}
Note that the algorithms given in (\ref{Algo1}) and (\ref{Algo2}) are equivalent when $K=\mathcal{H}$ and $\mu_k \equiv \mu$.
These algorithms have been studied  in \cite{FS}, \cite{DS} (unconstrained case), in \cite{GI} (constrained case) in a finite-dimensional setting, and assuming that the objective functions are $C^1$ (not necessarily convex). As a distinctive feature of these algorithms, the  steplength $\lambda_k$ is computed by an Armijo-like rule (to secure a descent property), and directions $d_k$ are computed approximatively (with a given tolerance). They lead to the following results:
\begin{enumerate}
\item If $\mu_k \equiv \mu$, then any accumulation point is a critical Pareto point.
\item If the objective functions are convex, and if $\mu_k= \frac{\alpha_k}{\max\limits_{i=1,...,q} \Vert \nabla f_i (u_k) \Vert}$ with $\alpha_k \in \ell^2 \setminus \ell^1$, then any bounded sequence converges to a weak Pareto optimal point.
\end{enumerate}
It appears that these algorithms, which are obtained -at least formally- by the explicit discretization in time of (MOG), share common properties with our continuous dynamic (descent property, convergence to weak Pareto optimal points).
It would be interesting to  justify mathematically  that the continuous and discrete dynamic systems have the same asymptotic behavior, as it was established in the case of a single objective (see \cite{PS}). Another challenging aspect of these algorithms is the effective computation of $d_k$. For instance, in the unconstrained case, we need to solve the minimization problem (\ref{D:FirstForm}), which can be done by applying a  Gauss-Seidel-like method to
\begin{equation}\label{D:Algo2Subroutine}
\underset{ \begin{array}{c} \Lambda=(\lambda_1,...,\lambda_q) \in \S^q \\(p_1,...,p_q)\in \H^q \end{array} }{\text{\rm minimize}} \quad \frac{1}{2} \Vert {\sum\limits_{i=1}^{q} \lambda_i p_i} \Vert^2 + \delta_{\S^p} (\Lambda) + {\sum\limits_{i=1}^{q} \delta_{\partial f_i (u^k)} (p_i)}.
\end{equation}
Problem (\ref{D:SecondForm}) is also well suited for primal-dual methods, and perhaps other methods could be examined and compared. To our knowledge, this work has never been done, and is a subject for further study.

More recently, a trust-region method for unconstrained multiobjective problems involving smooth functions has been developed in \cite{VOS}, which uses the norm of the multiobjective steepest descent vector as a generalized marginal function.
In \cite{FliDruSva09,DRS}, a Newton method for unconstrained strongly convex vector optimization has been developed,  with a local superlinear convergence result.  Instead of taking $d_k$ as a descent direction computed from first-order quadratic models as in (\ref{D:SecondForm}), the authors use  \textit{second-order} quadratic models to define a multiobjective Newton direction as:
\begin{equation}
d_k = \underset{d \in \H}{\text{argmin}} \left\{ \max\limits_{i=1,...,q} \langle \nabla f_i (u_k) , d \rangle + \frac{1}{2} \langle \nabla^2 f_i(u_k) d,d \rangle \right\}.
\end{equation}
As in Theorem \ref{steep;desc;dir}, they show that this discrete dynamic corresponds to the classical Newton's method applied to a weighted combination of the objective functions, but with an endogenous scalarization. In other words, at each step, the algorithm provides $(\theta_i^k) \in \S^q$ such that 
\begin{equation}
d_k = -\left( {\sum\limits_{i=1}^{q} \theta_i^k \nabla^2 f_i (u_k)}  \right)^{-1} \left( {\sum\limits_{i=1}^{q} \theta_i^k \nabla f_i (u_k)} \right).
\end{equation}
See also \cite{Pov14} for second-order models built with a BFGS scheme, to avoid the direct computation of the Hessian. These works suggests the existence of corresponding continuous Newton-like dynamics (see for example \cite{AS} in the
case of a single objective).

 \section{Conclusion, perspectives}\label{cp}
 
In this paper, we have shown some remarkable properties of the multiobjective steepest descent direction, and of the dynamical system which is governed by the corresponding vector field: along each trajectory, all the  objective functions are decreasing, and there is convergence to a weak Pareto minimum. Working in a general Hilbert space, and with convex continuous functions (not necessarily differentiable) allows us to  cover a wide range of applications. However, there are many issues to be resolved. Among the most challenging, let us mention the uniqueness of the solution, for a given Cauchy data, and
the dynamical properties of the weighting coefficients.
The natural link between the (MOG) dynamic and the theory of gradient flows naturally suggests to study the dynamics for semi-algebraic functions, on the basis of  Kurdyka-Lojasiewicz inequality.
Obtening rapid methods based on an analysis of second order in time (inertial aspects), or space (Newton-like methods) is important both from the numerical, and modeling point of view.
It would be also interesting to consider interior point methods.
Some modeling aspects in game theory, economics, and inverse problems, have been considered in the previous sections. They are still largely unexplored.
All these  results suggest that there is a broad class of continuous dynamics that contains (MOG), and having similary properties with respect to Pareto equilibration.  Enriching this class of dynamics can be useful for numerical purpose, and  for understanding the complex interactions in Pareto equilibration (coalitions, negotiation, bargaining, dealing with uncertainty, changes in the environment, psychological aspects). 
These are interesting topics for further research.

\if{
\newpage

\noindent $\mathcal H= \mathbb R \times \mathbb R $, \ $u = (x,y)$.

\bigskip

\noindent 1. \ $f_1(x,y)= \frac{1}{2}x^2$  \quad  \quad  $f_2(x,y)= \frac{1}{2}y^2$

\bigskip

\noindent $s(x,y) = - (\mbox{{\rm Conv}}\left\{\nabla f_i(x,y) \right\} )^{0}=  -\left(  \frac {xy^2}{x^2 + y^2}, \   \frac {yx^2}{x^2 + y^2}        \right)$

\medskip

\noindent Pareto set:  \ $\mathbb R \times \left\{0\right\} \cup   \left\{0\right\} \times \mathbb R$

\medskip

\setlength{\unitlength}{7cm}
\begin{picture}(0.5,0.7)(-0.6,0)
\put(-0.04,0.01){$0$}

\put(0,0){\line(1,1){0.65}}
\put(0.65,0.6){$x=y$}	
\put(0.8,0.41){$M  (x,y)$}
\put(0.8,0.4){\line(0,-1){0.4}}
\put(0.8,0.4){\line(-1,0){0.8}}	
\put(0,0.4){\line(2,-1){0.8}}	
%\put(0,0){\vector(1,2){0.16}}
\put(0.8,0.4){\vector(-1,-2){0.16}}
\linethickness{0.3mm}
\put(0.785,-0.04){$x$}
\put(-0.05,0.385){$y$}
%\put(0.65,0.069){\line(1,2){0.012}}	
%\put(0.64,0.107){\line(2,-1){0.026}}
\put(0.52,0.2){$s(x,y)$}
\put(0.0,-0.1){\vector(0,1){0.8}}	
\put(-0.1,0){\vector(1,0){1.01}}

%%%%%% I replaced Hedy's curves buy the one obtained numerically %%%%%
%\qbezier(0.7,0)(0.73,0.2)(0.8,0.4)
\qbezier(0.69,0)(0.71,0.155)(0.8,0.4)

%\qbezier(0.20,0)(0.22,0.2)(0.45,0.43)
\qbezier(0.45,0.43)(0.2,0.15)(0.19,0.135)
\qbezier(0.19,0.135)(0.18,0.12)(0.17,0.1065)
\qbezier(0.17,0.1065)(0.16,0.09)(0.15,0.07)
\qbezier(0.15,0.07)(0.14,0.045)(0.135,0.025)
\qbezier(0.135,0.025)(0.133,0.0115)(0.1326,0)

%\qbezier(0.0,0.45)(0.2,0.5)(0.4,0.6)
\qbezier(0.0,0.415)(0.17,0.45)(0.5,0.65)
\qbezier(0.0,0.345)(0.17,0.385)(0.55,0.65)
\qbezier(0.0,0.25)(0.17,0.3)(0.6,0.65)
\end{picture}

\bigskip

\begin{center}
Figure 1
\end{center}

\bigskip

\bigskip

\bigskip 

\noindent 2.  \  $f_1(x,y)= \frac{1}{2}(x+1)^2    + \frac{1}{2}(y)^2 $  \quad  \quad  $f_2(x,y)= \frac{1}{2}(x-1)^2    + \frac{1}{2}(y)^2 $ 

\bigskip

\begin{equation*}
s(x,y)=
\begin{cases}
 -(x-1,y)     \  \mbox{if } \  x>1,\\
 -(0,y) \  \mbox{if } \  -1 \leq x \leq 1,\\
 -(x+1,y)     \  \mbox{if } \   x<-1
\end{cases}
\end{equation*}

Pareto set: $[-1, +1] \times \left\{0\right\}$.

\setlength{\unitlength}{7cm}
\begin{picture}(0.3,0.7)(-0.6,-0.08)

\put(1.25,0.02){$x$}
\put(0.38,0.56){$y$}
\put(0.37,0.02){$0$}
%\put(0.3,-0.3){$\mbox{figure} \  3$}
\put(0.75,0.02){$+1$}
\put(0.02,0.01){$-1$}

\put(0.923,0.45){\vector(-1,-2){0.03}}
\put(-0.198,0.4){\vector(1,-2){0.03}}
\put(0.5,0.4){\vector(0,-1){0.05}}

\put(0.7,-0.1){\line(0,1){0.7}}
\put(0.0,-0.1){\line(0,1){0.7}}

\put(0.35,-0.1){\vector(0,1){0.7}}	
\put(-0.3,0){\vector(1,0){1.6}}
%\linethickness{0.3mm}
\put(0.7,0.00){\line(1,2){0.25}}
\put(0.0,0.00){\line(-1,2){0.25}}
\put(0.5,0.00){\line(0,1){0.5}}
\put(0.06,0.00){\line(1,0){0.58}}   

\linethickness{0.4mm}
\put(0.0,0.0){\line(1,0){0.7}}
\end{picture}

\bigskip

\begin{center}
Figure 2
\end{center}

\bigskip

\bigskip

\newpage
\bigskip

\bigskip

3. Example where the vector field $u\mapsto s(u)$ is not Lipschitz.

\bigskip

\begin{center}
$\|  \mbox{{\rm proj}}_C (v) -  \mbox{{\rm proj}}_D (v)   \| \leq   \rho(\|v\|) \mbox{{\rm haus}}(C,D)^{\frac{1}{2}}$
\end{center}

\setlength{\unitlength}{6cm}

\begin{picture}(0.6,0.6)(-0.35,0)
\put(-0.02,0.02){$0$}

\put(0.138,0){\line(-1,3){0.02}}

\put(0.145,0.010){$\theta$}
\put(1.01,0){$A$}
\put(1.01,0.25){$D$}
\put(0.82,0.20){$C$}
\put(0.96,0.15){$\theta$}

%\put(1,0.125){\line(-3,-1){0.06}}

\put(1,0.133){\line(-3,-1){0.06}}

\put(0.95,0){\line(0,1){0.05}}
\put(1,0.05){\line(-1,0){0.05}}
\put(1.005,0.505){$H$}
\put(0.75,0.41){$K$}
  
\put(0.737,0.37){\line(1,-2){0.03}}

\put(0.764,0.31){\line(2,1){0.065}} 
\put(0,0){\line(2,1){1}}
\put(0,0){\line(1,0){1}}
\put(0.5,0.01){$1$}

\put(0.95,0){\line(0,1){0.05}}
\put(1,0.05){\line(-1,0){0.05}}
\put(1,0){\line(-1,2){0.2}}	
\put(0.997,0){\line(0,1){0.5}}
\end{picture}

\bigskip

\begin{itemize}
\item $f_1 (x,y) =\frac{1}{2}(x^2 + y^2)$, \  $f_2 (x,y)= x   $;  \\  $\nabla f_1 (x,y) =(x,y)$, \quad \ \   $\nabla f_2 (x,y) =(1,0)$

\medskip

\item $-s(K) = \vec{OK}$; \ $-s(H) = \vec{OA}$

\medskip

\item $    \| s(H) - s(K)    \|   = \|AK\|   = \sin (\theta) \approx \theta $  \quad ($\theta$ small)

\medskip

\item $   \| HK\| = \tan (\theta)   \sin (\theta )   \approx \theta^2 $

\end{itemize}

\begin{center}
Figure 3
\end{center}

\bigskip

\vspace{3cm}
\setlength{\unitlength}{3cm}

\begin{picture}(0.5,0.5)(-1.5,0)

%Origine
\put(-0.08,-0.1){$0$} 

%Abscisses et Ordonnées
\put(0.0,-0.5){\vector(0,1){2}} 	
\put(-1,0.0){\vector(1,0){2.5}} 

% Droite verticale
\put(1,-0.5){\line(0,2){2}} 
\put(1.05,-0.5){$x=1$}

%Cercle
\put(0.5,0){\circle{1}}
\put(1.01,0.05){$(1,0)$}
\put(1,0){\circle*{0.04}}

%trajectoires
\linethickness{0.5mm}

\qbezier(1.5,1.4)(1.25,1.4)(1,1.4)
\qbezier(1,1.4)(0.9,1.398)(0.8,1.39)
\qbezier(0.8,1.39)(0.7,1.37)(0.61,1.35)
\qbezier(0.61,1.35)(0.51,1.32)(0.43,1.29)
\qbezier(0.43,1.29)(0.34,1.25)(0.23,1.18)
\qbezier(0.23,1.18)(0.1,1.09)(0,1)
\qbezier(0,1)(-0.1,0.89)(-0.17,0.8)
\qbezier(-0.17,0.8)(-0.23,0.7)(-0.28,0.6)
\qbezier(-0.28,0.6)(-0.33,0.51)(-0.36,0.4)
\qbezier(-0.36,0.4)(-0.39,0.3)(-0.41,0.2)
\qbezier(-0.41,0.2)(-0.42,0.1)(-0.425,0)

\qbezier(1.5,1)(1.25,1)(1,1)
\qbezier(1,1)(0.9,0.99)(0.8,0.985)
\qbezier(0.8,0.985)(0.7,0.963)(0.61,0.93)
\qbezier(0.61,0.93)(0.53,0.897)(0.42,0.83)
\qbezier(0.42,0.83)(0.32,0.76)(0.22,0.66)
\qbezier(0.22,0.66)(0.148,0.56)(0.065,0.415)
\qbezier(0.065,0.415)(0.04,0.36)(0,0.22)
\qbezier(0,0.22)(-0.01,0.175)(-0.025,0)

\qbezier(0,1.4)(-0.09,1.328)(-0.2,1.22)
\qbezier(-0.2,1.22)(-0.3,1.13)(-0.345,1.07)
\qbezier(-0.345,1.07)(-0.4,1)(-0.5,0.847)
\qbezier(-0.5,0.847)(-0.56,0.74)(-0.6,0.65)
\qbezier(-0.6,0.65)(-0.639,0.55)(-0.669,0.45)
\qbezier(-0.669,0.45)(-0.69,0.35)(-0.71,0.25)
\qbezier(-0.71,0.25)(-0.72,0.15)(-0.73,0)

\qbezier(1.5,0.6)(1.25,0.6)(1,0.6)
\qbezier(1,0.6)(0.9,0.592)(0.8,0.57)
\qbezier(0.8,0.57)(0.7,0.525)(0.64,0.48)
\qbezier(0.64,0.48)(0.32,0.24)(0,0)

\qbezier(1.5,0.4)(1.25,0.4)(1,0.4)
\qbezier(1,0.4)(0.95,0.397)(0.9,0.389)
\qbezier(0.9,0.389)(0.85,0.375)(0.84,0.368)
\qbezier(0.84,0.368)(0.42,0.184)(0,0)

\qbezier(1.5,-0.2)(1.25,-0.2)(1,-0.2)
\qbezier(1,-0.2)(0.98,-0.198)(0.96,-0.196)
\qbezier(0.96,-0.196)(0.48,-0.098)(0,0)

\end{picture}

\vspace{1.5cm}

}\fi

\end{document}